\newtheorem{theorem}{Theorem}[section]
\newtheorem{prop}[theorem]{Proposition}
\newtheorem{lemma}[theorem]{Lemma}
\theoremstyle{definition}
\newtheorem{question}{Question}
\newtheorem{defn}[theorem]{Definition}
\newtheorem{fac}{Fact}
\newtheorem{rmk}{Remark}
\newtheorem{claim}[theorem]{Claim}
\newcommand{\N}{\mathbb{N}}
\newcommand{\eps}{\varepsilon}
\newcommand{\de}{\delta}
\title{Clique-factors in graphs with low $K_{\ell}$-independence number}
\author{
	Ming Chen\thanks{School of Mathematics and Statistics, Jiangsu Normal University, Xuzhou, China. Email: {\tt chenming314@jsnu.edu.cn}. Supported by National Key Research and Development Program of China (2024YFA1013900).}
	\and
	Jie Han\thanks{School of Mathematics and Statistics and Center for Applied Mathematics, Beijing Institute of Technology, Beijing, China. Email: {\tt han.jie@bit.edu.cn}. Supported by National Natural Science Foundation of China (12371341).}
	\and
	Donglei Yang\thanks{School of Mathematics, Shandong University, Jinan, China, Email: {\tt dlyang@sdu.edu.cn}. Supported by Natural Science Foundation of China (12101365) and Natural Science Foundation of Shandong Province (ZR2021QA029).}
}
\begin{document}
\maketitle

\begin{abstract}
Given $r\in \mathbb{N}$ with $r\geq 4$, we show that there exists $n_0\in \mathbb{N}$ such that for every $n\geq n_0$, every $n$-vertex graph $G$ with $\delta(G)\geq (\frac{1}{2}+o(1))n$ and $\alpha_{r-2}(G)=o(n)$ contains a $K_{r}$-factor.
This resolves the first open case of a question proposed by Nenadov and Pehova, and reiterated by Knierm and Su.
We further introduce two lower bound constructions that, along with some known results, fully resolve a question presented by Balogh, Molla, and Sharifzadeh.
\end{abstract}
\section{Introduction}
Let $H$ be an $h$-vertex graph and $G$ be an $n$-vertex graph. An $H$-\emph{tiling} in $G$ is a collection of vertex-disjoint subgraphs of $G$ isomorphic to $H$.
An $H$-\emph{factor} is an $H$-tiling which covers all vertices of $G$.
Determining sufficient conditions for the existence of an $H$-factor is one of the fundamental lines of research in extremal graph theory.
One important reason is due to a result of Hell and Kirkpatrick \cite{1983Hell} which shows that the decision problem for $H$-factors is \emph{NP}-complete, given that $H$ has a connected component of size at least 3.

The first result in this line of attack is by Dirac \cite{Dirac1952}, who proved that every $n$-vertex graph $G$ with $\delta(G)\geq\frac{n}{2}$ contains a Hamiltonian cycle, in particular if $n$ is even then $G$ has a perfect matching.
Later a  result of Hajnal and Szemer\'{e}di \cite{HajnalSze1970} gives a sufficient condition $\delta(G)\geq \frac{r-1}{r}n$ guaranteeing the existence of a $K_{r}$-factor.
The $K_{3}$-factor case was previously proved by Corr\'{a}di and Hajnal \cite{MR200185}.
The unbalanced complete $r$-partite graph witnesses the tightness of the minimum degree condition.
Determining the best possible bound on $\delta(G)$ for an arbitrary graph $H$ has been an intriguing problem.
There had been many excellent results (see \cite{MR1767021, MR1829855, MR1995690} and survey \cite{MR2588541}) until it was finally settled by K\"{u}hn and Osthus \cite{DKDO2009}.
There have been various generalizations of Hajnal--Szemer\'{e}di theorem in the setting of partite graphs \cite{MR3354296, KKore2008, MR1910115, MR2433861}, directed graphs \cite{MR3406450} and hypergraphs \cite{MR2500161}.

\subsection{Ramsey--Tur\'{a}n Tiling problems}

Note that the extremal example that achieves the optimality of the bound on $\delta(G)$ in Hajnal--Szemer\'{e}di Theorem contains a large independent set, which is ``regular'' and rather rare among all graphs.
Following the spirit of the well-known Ramsey--Tur\'{a}n theory (see \cite{MR1476449, MR716422, MR0299512, MR2500161}), a natural question on the Hajnal--Szemer\'{e}di Theorem is to determine the minimum degree condition forcing a clique factor when the host graph has sublinear independence number.
The following Ramsey--Tur\'{a}n type question was proposed by Balogh, Molla, and Sharifzadeh \cite{MR3570984}.

\begin{question}[\cite{MR3570984}, Question 1.1]\label{pro1.1}
Let $r\geq 3$ be an integer and $G$ be an $n$-vertex graph with $\alpha(G)=o(n)$. What is the minimum degree condition on $G$ that guarantees a $K_{r}$-factor?
\end{question}

Balogh, Molla, and Sharifzadeh \cite{MR3570984} studied the $K_{3}$-factor, and proved the following result.

\begin{theorem}[\cite{MR3570984}]\label{bms1}
Given constant $\mu >0$, there exists $\alpha >0$ such that the following holds for sufficiently large $n\in 3\mathbb{N}$. Let $G$ be an $n$-vertex graph with $\delta(G)\geq (\frac{1}{2}+\mu)n$ and $\alpha(G)\leq \alpha n$. Then $G$ contains a $K_{3}$-factor.
\end{theorem}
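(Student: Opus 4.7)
The plan is to apply the absorption method with constants $0<\alpha\ll\beta\ll\mu$ and $n$ sufficiently large.

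\textbf{Absorbing set.} I first show that every triple $T=\{x,y,z\}$ of vertices of $G$ has $\Omega(n^6)$ absorbing $6$-sets: sets $S$ of $6$ vertices such that $G[S]$ and $G[S\cup T]$ both admit $K_3$-factors. The enabler is that $\delta(G)\ge(1/2+\mu)n$ forces every pair of vertices to share at least $2\mu n$ common neighbours, and since $\alpha(G)\le\alpha n\ll\mu n$, such a common neighbourhood spans $\Omega(n^2)$ edges and hence $\Omega(n^2)$ triangles. Chaining triangles through the three vertices of $T$ produces the claimed $\Omega(n^6)$ absorbers. The standard probabilistic argument of R\"odl--Ruci\'nski--Szemer\'edi then yields an absorbing set $A\subseteq V(G)$ with $|A|\le\beta^2 n$ and $3\mid|A|$, such that $G[A]$ has a $K_3$-factor and, for every $W\subseteq V(G)\setminus A$ with $|W|\le\beta n$ and $3\mid|W|$, $G[A\cup W]$ has a $K_3$-factor.

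\textbf{Almost-perfect tiling.} Next I would show that $G':=G\setminus A$ contains a $K_3$-tiling covering all but at most $\beta n$ vertices. Apply Szemer\'edi's Regularity Lemma to $G'$ with parameters $0<\eps\ll d\ll\beta$ to obtain a reduced graph $R$ on $k$ clusters with $\delta(R)\ge(1/2+\mu/3)k$. A transference argument gives $\alpha(R)=o(k)$: a large independent set of clusters in $R$, combined with the low independence inside each cluster, would produce an actual independent set in $G$ of size $\omega(\alpha n)$, contradicting the hypothesis. Consequently every vertex of $R$ lies in many triangles, and a short LP-duality (or greedy) argument yields a fractional $K_3$-tiling of $R$ of weight nearly $k/3$. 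Invoking the standard triangle-cover lemma for regular triples lifts this to a $K_3$-tiling of $G'$ missing at most $\beta n$ vertices.

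\textbf{Combination and main obstacle.} Let $W$ be the set of vertices of $G'$ uncovered by the near-factor. Then $|W|\le\beta n$, and $3\mid|W|$ since $3\mid n$, $3\mid|A|$, and the tiling size is divisible by $3$. The absorbing property of $A$ makes $G[A\cup W]$ admit a $K_3$-factor, which combines with the near-factor on $G'\setminus W$ to produce a $K_3$-factor of $G$. The most delicate step is the absorber construction: absorbers must exist for \emph{every} triple $T$, including those in structurally atypical positions, and this is precisely where $\alpha(G)=o(n)$ does the heavy lifting by preventing sparse neighbourhoods around $T$ from obstructing the triangle-chaining. A secondary subtlety is the transfer of the independence bound from $G$ to the reduced graph in the second step with the right quantitative control; without the low-independence hypothesis one could not hope to go below the Hajnal--Szemer\'edi threshold $2n/3$.
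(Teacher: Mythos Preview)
This theorem is not proved in the present paper; it is quoted from Balogh--Molla--Sharifzadeh as background. The paper's own contribution (Theorem~\ref{thm7.2}) treats only $r\ge 4$, so there is no proof here against which to compare your attempt.

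That said, two steps in your outline contain genuine gaps. First, in the absorber construction: for a $6$-set $S$ to absorb $T=\{x,y,z\}$ you need \emph{both} $G[S\cup T]$ and $G[S]$ to admit $K_3$-factors. Picking an edge $a_ib_i$ in each $N(x_i)$ secures the first condition but says nothing about the second --- there is no reason the six vertices $\{a_1,b_1,a_2,b_2,a_3,b_3\}$ should themselves split into two triangles. This is the crux of the absorption step and cannot be elided; the usual remedy is to pass through reachability (as this paper does for $r\ge 4$) or to build a more elaborate direct gadget. Your auxiliary claim that a set of size $2\mu n$ with independence number at most $\alpha n$ spans $\Omega(n^2)$ edges is also incorrect: Tur\'an applied to the complement only yields $\Omega(\mu^2 n/\alpha)=\Omega(n)$ edges. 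Second, the transference ``$\alpha(G)=o(n)\Rightarrow\alpha(R)=o(k)$'' is not valid as stated: an independent set in $R$ corresponds to a union of clusters with pairwise \emph{low} (not zero) density in $G'$, and the edges of $G$ deleted in passing to $G'$ --- in particular all edges inside clusters --- may already rule out any large independent set in $G$. The almost-tiling argument must instead embed triangles into regular pairs directly, exploiting that each cluster contains many edges of $G$, in the spirit of the $\{K_2^{=},K_3\}$-tiling that the present paper uses for $r\ge 4$.
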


Recently, Knierim and Su \cite{MR4193066} proved the following result, which asymptotically solves the case of larger cliques in Question \ref{pro1.1}.
\begin{theorem}[\cite{MR4193066}]\label{thm1.31}
Given $\mu >0$ and $r\in \mathbb{N}$ with $r\geq 4$, there exists $\alpha >0$ such that the following holds for sufficiently large $n\in r\mathbb{N}$. Let $G$ be an $n$-vertex graph with $r$ dividing $n$, $\delta(G)\geq (1-\frac{2}{r}+\mu)n$ and $\alpha(G)\leq \alpha n$. Then $G$ contains a $K_{r}$-factor.
\end{theorem}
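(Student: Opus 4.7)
The plan is to apply the absorption method of R\"{o}dl, Ruci\'{n}ski, and Szemer\'{e}di in three stages. First, construct a small absorbing set $A \subseteq V(G)$ with $|A| \le \mu n/10$ and $r \mid |A|$, enjoying the property that for every $T \subseteq V(G) \setminus A$ with $r \mid |T|$ and $|T| \le \eta n$ (for some $\eta \ll \mu$), the induced subgraph $G[A \cup T]$ contains a $K_r$-factor. Second, produce a $K_r$-tiling of $G \setminus A$ covering all but at most $\eta n$ vertices. Third, absorb the uncovered remainder using $A$; since $r \mid n$, the leftover size is automatically a multiple of $r$, so the three pieces concatenate into a $K_r$-factor of $G$.

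For the absorbing stage, the key is a reachability statement: every vertex $v$ lies in many ``switcher'' gadgets, where a gadget for $v$ is a $(2r-1)$-vertex set $A_v$ that is $K_r$-tileable in two ways, differing by a swap with $v$, so that $v$ can be absorbed without waste. Since $\delta(G) \ge (1-2/r+\mu)n$, every edge $uv$ satisfies
\[
|N(u) \cap N(v)| \ge 2\delta(G) - n \ge (1 - 4/r + 2\mu)n,
\]
which is strictly positive for $r \ge 4$; together with $\alpha(G) \le \alpha n$, this common neighbourhood contains an edge, yielding a $K_4$, and iterating through common neighbourhoods of larger cliques produces many $K_r$'s. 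A counting/variance argument then shows that a positive density of these $K_r$'s assemble into switcher gadgets for any prescribed $v$, and aggregating via Montgomery's template matching (or the original random selection of R\"{o}dl--Ruci\'{n}ski--Szemer\'{e}di) yields the required absorbing set.

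The main obstacle is the near-perfect $K_r$-tiling. The minimum-degree threshold $(1 - 2/r + \mu)n$ is strictly below the Hajnal--Szemer\'{e}di value $(1 - 1/r)n$, so the low-independence hypothesis must be substantively used at the tiling level rather than merely to produce isolated cliques. I would apply Szemer\'{e}di's regularity lemma to $G \setminus A$, obtaining a reduced graph $R$ on $k$ clusters with $\delta(R) \ge (1 - 2/r + \mu/3) k$, and then argue by stability: either $R$ admits a $K_r$-tiling covering almost all clusters---which lifts via the regular pairs into a near-perfect $K_r$-tiling of $G \setminus A$---or $R$ is close to an extremal structure for Hajnal--Szemer\'{e}di whose pre-image in $G$ must contain a large independent set, contradicting $\alpha(G) \le \alpha n$. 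The delicate point, and the crux of the argument, is transferring the independence hypothesis to $R$: an independent set in $R$ corresponds only to an independent \emph{transversal} of $G$, so the stability analysis must be set up carefully to extract a genuine large independent set from any extremal pattern in $R$. Once the near-tiling is in hand, the absorbing property of $A$ finishes the proof.
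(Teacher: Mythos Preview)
This theorem is quoted from Knierim and Su and is not proved in the present paper, so there is no in-paper argument to match against line by line. The paper does, however, sketch the architecture of the Knierim--Su proof in Section~1.3 (and carries out the analogous programme for Theorem~\ref{thm7.2}), and against that template your tiling step has a genuine gap.

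Your plan for the almost-perfect tiling is to pass to the \emph{simple} reduced graph $R$ with $\delta(R)\ge(1-\tfrac{2}{r}+\tfrac{\mu}{3})k$ and run a stability dichotomy: either $R$ admits a near-perfect $K_r$-tiling, or $R$ is close to a Hajnal--Szemer\'edi extremal configuration whose pre-image in $G$ contains a large independent set. This cannot work as written. The value $(1-\tfrac{2}{r})k$ sits far below the Hajnal--Szemer\'edi threshold $(1-\tfrac{1}{r})k$, so $R$ can be $K_r$-free without being near any useful extremal object; e.g.\ a balanced complete $(r-1)$-partite $R$ has $\delta(R)=\tfrac{r-2}{r-1}k>(1-\tfrac{2}{r})k$ and contains no $K_r$ at all. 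You already name the reason no contradiction can be extracted from such an $R$: sparseness between clusters says nothing about edges \emph{inside} clusters, and each cluster of size $m\gg\alpha n\ge\alpha(G)$ is internally edge-rich, so no large independent set of $G$ appears. The approach that actually succeeds abandons the search for $K_r$ in $R$ entirely. One passes to the reduced \emph{multigraph} of Definition~\ref{def2.4}, in which double edges mark pairs of density above $\tfrac12$; there the degree condition becomes $\delta\ge(1+\mu)k$ (Fact~\ref{fact2.5}), and one covers the multigraph by small gadgets such as $K_2^{=}$ and $K_3$ (Knierim--Su do this fractionally via the Bollob\'as--Erd\H{o}s graph, as noted after Lemma~\ref{lem4.4}; this paper gives an integral version in Lemma~\ref{lem4.4}). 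Each gadget is then lifted to a balanced collection of copies of $K_r$ across its clusters by spending the low-independence hypothesis \emph{inside} clusters: any linear-sized subset contains an edge, and a double edge guarantees that the endpoints of that edge have a linear common neighbourhood in the partner cluster, which again contains an edge, and so on (cf.\ Lemmas~\ref{lem2.17}--\ref{lem2.171}). In short, $\alpha(G)=o(n)$ is consumed at the embedding level, cluster by cluster, not converted into a structural statement about $R$; your stability scheme tries to do the latter and stalls.
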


Nenadov and Pehova \cite{MR4080942} proposed an extension of Question \ref{pro1.1} in the following, which was reiterated by Knierm and Su \cite{MR4193066}.
Given a graph $G$ and an integer $\ell\geq 2$, we denote by $\alpha_{\ell}(G)$ the \emph{$K_{\ell}$-independence number} of $G$, which is the maximum size of a $K_{\ell}$-free
subset of vertices in $V(G)$.

\begin{question}[\cite{MR4193066, MR4080942}]\label{pro1.2}
Is it true that for every $r, \ell\in \mathbb{N}$ with $r\geq \ell$ and $\mu>0$ there is a constant $\alpha$ and $n_{0}\in \mathbb{N}$ such that every graph $G$ on $n\geq n_{0}$ vertices where $r$ divides $n$ with $\delta(G)\geq \max\left\{\left(\frac{1}{2}+\mu\right)n, \left(\frac{r-\ell}{r}+\mu\right)n\right\}$ and $\alpha_{\ell}(G)\leq \alpha n$ has a $K_{r}$-factor?
\end{question}

Nenadov and Pehova \cite{MR4080942} also proved a general upper bound for the minimum degree conditions, which is asymptotically optimal when $\ell=r-1$.
In the following, we give an affirmative answer to Question \ref{pro1.2} for the case $\ell=r-2$.
\begin{theorem}[]\label{thm7.2}
Given $r\in \mathbb{N}$ with $r\ge 4$ and $\mu>0$, there exists $\alpha>0$ such that the following holds for sufficiently large $n\in r\mathbb{N}$.
Let $G$ be an $n$-vertex graph such that $\delta(G)\geq \left(\frac{1}{2}+\mu\right)n$ and $\alpha_{r-2}(G)\leq \alpha n$. Then $G$ contains a $K_{r}$-factor.
\end{theorem}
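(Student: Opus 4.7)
The plan is the standard absorption method, in three phases: (i) construct an absorbing set $M$ of size $o(n)$ that swallows any small admissible leftover, (ii) find an almost-perfect $K_r$-tiling of $G\setminus M$ leaving only $o(n)$ vertices uncovered, and (iii) apply (i) to absorb the leftover into a $K_r$-factor. Fix a hierarchy $\alpha\ll\eta\ll\mu$.

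For (i), I would follow the R\"odl--Ruci\'nski--Szemer\'edi random-subset template, reducing the task to exhibiting polynomially many constant-size \emph{absorbers} per $r$-set $S\subseteq V(G)$, i.e.\ vertex sets $A$ disjoint from $S$ with both $G[A]$ and $G[A\cup S]$ spanned by $K_r$-factors. The key input is that every edge $uv\in E(G)$ lies in many $K_r$'s: by the degree bound, $|N(u)\cap N(v)|\ge 2\mu n$, and since $\alpha_{r-2}(N(u)\cap N(v))\le\alpha n\ll\mu n$, the common neighbourhood is Ramsey--Tur\'an dense and hence contains many copies of $K_{r-2}$, each extending $uv$ to a $K_r$. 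Stringing $r$ such extensions into the usual ``bouquet-of-$K_r$'s-with-swap-vertices'' absorber gives sufficiently many absorbers for each $r$-set.

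For (ii), the almost-perfect tiling is obtained as follows. Using $\alpha_{r-2}(G\setminus M)\le\alpha_{r-2}(G)=o(n)$, extract greedily a near-spanning $K_{r-2}$-tiling $\mathcal{K}$ of $G\setminus M$ covering all but a reservoir $R$ of size $\approx 2n/r$; after discarding a bounded number of cliques, one may arrange $|\mathcal{K}|=|R|/2$. Extending $\mathcal{K}$ to a $K_r$-tiling reduces to a near-perfect matching problem in the auxiliary $3$-uniform hypergraph on $\mathcal{K}\cup R$ whose hyperedges are triples $(T,x,y)$ with $T\cup\{x,y\}$ inducing a $K_r$. Such a matching can be produced via a Pippenger--Spencer or defect Hall-type argument, where the key inputs are $\delta(G)\ge(\tfrac{1}{2}+\mu)n$ and $\alpha(G)\le\alpha_{r-2}(G)\le\alpha n$: the latter forces any linear-sized subset of $V(G)$ to contain an edge, so that $N(T)\cap R$ is edge-rich whenever it is large.

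The main obstacle is this matching step when $r\ge 5$: the pointwise lower bound on $|N(T)|$ derivable from $\delta(G)\ge(\tfrac{1}{2}+\mu)n$ is already negative for a single $K_{r-2}$-copy $T$, so ``$N(T)\cap R$ is large'' holds only on average over $T\in\mathcal{K}$. I would address this by randomising the choice of $\mathcal{K}$ --- possibly guided by Szemer\'edi's regularity lemma applied to $G$, so that typical common neighbourhoods in the reduced graph are controlled --- and repairing the exceptional $T\in\mathcal{K}$ by local swaps within $\mathcal{K}$. Once the absorbing set and the almost-perfect tiling are secured, a small adjustment of $\mathcal{K}$ ensures that $r$ divides the leftover, and $M$ absorbs it to complete the $K_r$-factor.
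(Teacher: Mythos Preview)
Your high-level decomposition matches the paper's: absorbing set plus almost-perfect tiling. But both of your phases have substantive gaps that the paper handles differently.

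For the absorbing step, your ``bouquet'' construction is not enough. What you actually need is that every $r$-set $S$ has linearly many absorbers, and the standard route to this is $1$-reachability: for any two vertices $u,v$, many $(r-1)$-sets $W$ with $G[W\cup\{u\}]\cong G[W\cup\{v\}]\cong K_r$, i.e.\ many copies of $K_{r-1}$ inside $N(u)\cap N(v)$. You only establish that $N(u)\cap N(v)$ (of size $\ge 2\mu n$) contains many $K_{r-2}$'s, not $K_{r-1}$'s; extending a $K_{r-2}$ on $w_1,\dots,w_{r-2}$ to a $K_{r-1}$ inside $N(u)\cap N(v)$ requires a vertex in $N(u)\cap N(v)\cap N(w_1)\cap\cdots\cap N(w_{r-2})$, and with $\delta(G)\approx n/2$ this intersection can be empty already for $r=4$. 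So $1$-reachability genuinely fails for some pairs. The paper deals with this via the lattice-based absorption method: it applies the regularity lemma, isolates a small bad set $B$, shows that the good set $U=V(G)\setminus B$ is $(K_r,\beta n,t)$-\emph{closed} (using a reachability partition plus a transferral argument in the reduced graph), builds the absorbing set inside $G[U]$ via the Nenadov--Pehova lemma, and then covers $B$ greedily by disjoint $K_r$'s. Your proposal skips exactly this mechanism.

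For the almost-perfect tiling, your reduction to a $K_{r-2}$-tiling plus a $3$-uniform matching is a different route from the paper's, and you correctly isolate the obstacle: for $r\ge 5$ the common neighbourhood $N(T)$ of a $K_{r-2}$-copy $T$ has no pointwise lower bound, so neither a Pippenger--Spencer nor a defect-Hall argument applies out of the box. Your proposed fix (randomise $\mathcal{K}$ with regularity guidance, then local repair) is a plan rather than an argument; in particular you give no mechanism ensuring that a \emph{typical} $K_{r-2}$ drawn from a cluster has large common neighbourhood in $R$, and no control on how many exceptional $T$'s survive. The paper avoids this entirely: it works in the reduced \emph{multigraph} $R$ (double-edges for pairs of density $>\tfrac12$), proves the purely combinatorial Lemma~\ref{lem4.4} that any $k$-vertex multigraph with $\delta\ge(1+\mu)k$ has an almost-perfect $\{K_2^{=},K_3\}$-tiling, and then shows (Lemmas~\ref{lem2.17}--\ref{lem2.171}) that each $K_2^{=}$ or $K_3$ in $R$ supports a small balanced gadget $Q_1$ or $Q_2$ with a built-in $K_r$-factor. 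This sidesteps the $N(T)$ problem because the $K_{r-2}$ is always placed inside a single cluster and only two additional vertices are drawn from one or two neighbouring clusters, where the regularity and density hypotheses give exactly the control you are missing.
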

The minimum degree in Theorem \ref{thm7.2} is asymptotically tight: consider an $n$-vertex graph $G$ consisting of two disjoint complete graphs of order $\frac{n}{2}+1$ and $\frac{n}{2}-1$.

There are also other (indeed, negative) results on Question \ref{pro1.2}.
Given integers $n, r$ and a function $f(n)$, we use $RT_{\ell}(n, K_{r}, f(n))$ to denote the maximum number of edges of an $n$-vertex $K_r$-free graph $G$ with $\alpha_{\ell}(G)\leq f(n)$.
In particular, the \emph{Ramsey-Tur\'{a}n density} of $K_r$ is defined as
\begin{center}
$\rho_{\ell}(r):=\lim\limits_{\alpha\rightarrow 0}\lim\limits_{n\rightarrow \infty} \frac{RT_{\ell}(n, K_{r}, \alpha n)}{\tbinom{n}{2}}$.
\end{center}
Chang, Han, Kim, Wang, and Yang \cite{CHKWY} determined that $\delta(G) \ge \left( \frac{1}{2-\varrho_{\ell}(r-1)} + o(1)\right)n $ is asymptotically tight for the case $r > \ell \ge \frac{3}{4}r$.
Comparing with the results in \cite{CHKWY}, Theorem \ref{thm7.2} gives a new result for $r\in \{5,6,7\}$.
Recently, Han, Hu, Wang, and Yang \cite{HHYW} extended the investigation of Question \ref{pro1.2} under the condition $\alpha_{\ell}(G)= n^{1-o(1)}$, and proved that every $n$-vertex graph $G$ with $\delta(G)\geq \max\big\{\frac{r-\ell}{r}+o(1), \frac{1}{2-\rho_{\ell}(r-1)}+o(1)\big\}n$ and $\alpha_{\ell}(G)\leq n^{1-o(1)}$ contains a $K_{r}$-factor.

We remark that it would be interesting to try to solve Question \ref{pro1.2} for the next concrete case $\ell=3$ and $r=6$, which seems to be out of reach via our approaches.

\subsection{$K_r$-factor in $K_s$-free graphs}
Note that the construction of two vertex-disjoint cliques of almost equal size not divisible by $3$ has low
independence number and contains no triangle factors.
In \cite{MR3570984}, Balogh, Molla, and Sharifzadeh suggested the following question which concerns the absence of larger cliques.

\begin{question}[\cite{MR3570984}, Question 6.3]\label{pro1.11}
Let $s, r\in \mathbb{N}$ with $s>r$, and $G$ be an $n$-vertex $K_{s}$-free graph with $\alpha(G)=o(n)$.
What is the minimum degree condition on $G$ that guarantees a $K_{r}$-factor in $G$?
\end{question}

Balogh, Molla, and Sharifzadeh \cite{MR3570984} also established the following two constructions for Question \ref{pro1.11}.
For completeness, we restate the constructions in Section \ref{sec2.1}.
\begin{prop}[\cite{MR3570984}, Example 6.1]\label{prop1.1212}
Given $\alpha, \varepsilon>0$, $r\in \mathbb{N}\backslash 2\mathbb{N}$ (or $r\in 2\mathbb{N}$), and sufficiently large $n$, there exists an $n$-vertex graph with $\delta(G)\geq (1-\frac{2}{r}+\varepsilon)n$ and $\alpha(G)\leq \alpha n$ such that $G$ is $K_{r+2}$-free (resp. $K_{r+1}$) and contains no $K_{r}$-factor.
\end{prop}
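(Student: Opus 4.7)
The plan is to use a common blow-up template for the two parities, where each ``blob'' is a triangle-free graph with low independence number and between-blob edges form a complete multipartite graph; the $K_s$-freeness will come from the triangle-free blobs capping the clique number at twice the number of parts, and the no-$K_r$-factor obstruction will be engineered via a carefully chosen size imbalance. Concretely, partition $V(G)$ into $t$ parts $V_1,\dots,V_t$, place inside each $V_i$ a triangle-free graph $H_i$ with $\alpha(H_i)\le \alpha|V_i|$ and $\de(H_i)\ge \eps n$, and put a complete multipartite graph between distinct parts. Then any clique of $G$ meets each part in at most two vertices, so $\omega(G)\le 2t$; every independent set lies inside a single part, so $\alpha(G)\le \alpha\max_i|V_i|\le \alpha n$; and for $v\in V_i$ we have $\deg_G(v)=n-|V_i|+\deg_{H_i}(v)$, whence $\de(G)\ge n-\max_i|V_i|+\min_i\de(H_i)$.

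For even $r=2k$, take $t=k$ with $|V_1|=n/k+2$, $|V_2|=n/k-2$, and $|V_i|=n/k$ for $i\ge 3$. Then $\omega(G)\le 2k=r$, so $G$ is $K_{r+1}$-free, and because every $K_r$ of $G$ uses exactly two vertices from each $V_i$, a $K_r$-factor would force $|V_1|=|V_2|$, contradicting the choice. For odd $r=2k+1$, take $t=k+1$, $|V_1|=2n/r+2$ (an integer since $r\mid n$ and $r$ is odd), and make the remaining $t-1$ parts nearly equal, each of size strictly less than $2n/r$. Then $\omega(G)\le 2(k+1)=r+1$, so $G$ is $K_{r+2}$-free; each $K_r$ uses at most two vertices of $V_1$, so a putative $K_r$-factor consisting of $n/r$ cliques would cover at most $2\cdot (n/r)=2n/r<|V_1|$ vertices of $V_1$, a contradiction. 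In both cases $\max_i|V_i|\le 2n/r+2$, so $\de(G)\ge (1-2/r)n-2+\eps n\ge (1-2/r+\eps/2)n$ once $n$ is sufficiently large.

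The main substantive ingredient is the Ramsey-Tur\'an building block $H_i$: a triangle-free graph on $m=\Theta(n)$ vertices with $\alpha(H_i)\le \alpha m$ and $\de(H_i)\ge \eps n$. Since $\varrho_1(K_3)=0$ (Erd\H os--S\'os), $\eps$ has to be taken small in terms of $\alpha$, but within this regime a suitable blow-up of an Andr\'asfai graph, or more generally a pseudo-random triangle-free graph, provides the required independence bound together with a linear-size minimum degree -- much more than enough to absorb the term $\eps n$. The remaining bookkeeping -- integrality of the $|V_i|$, compatibility with $r\mid n$, and absorption of $O(1)$ perturbations into the $\eps$-slack -- is routine for all sufficiently large $n$, so the construction step and not the verification is the only real obstacle.
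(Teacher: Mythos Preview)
Your proposal is correct and follows essentially the same approach as the paper: both take a complete $t$-partite template (with $t=r/2$ for even $r$ and $t=(r+1)/2$ for odd $r$), place an Erd\H{o}s-type sparse triangle-free graph of sublinear independence number inside each part, and create a slight size imbalance in one part to block the $K_r$-factor while the $K_s$-freeness comes from $\omega(G)\le 2t$. The only cosmetic difference is the exact part sizes in the odd case (the paper uses one small part of size $n/r-1$ and one of size $2n/r+1$, whereas you make the non-large parts roughly equal and below $2n/r$), but the argument is otherwise identical.
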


Note that Theorem \ref{thm1.31} together with Proposition \ref{prop1.1212} answered Question \ref{pro1.11} for the cases $s=r+1$ with $r\in 2\mathbb{N}$, and $s\geq r+2$.
The only left case in Question \ref{pro1.11} is $s=r+1$ with $r\in \mathbb{N}\backslash 2\mathbb{N}$.
Here, we resolve this question.
Given $s\geq 3$, let
\[
f_{RT}(s) := \begin{cases} \frac{s-3}{s-1} \quad~~\text{ if s is odd}; \\
\frac{3s-10}{3s-4} \quad \text{ if s is even}.
\end{cases}
\]
First, for $r=3$ and $s\geq 5$, it is shown by Balogh, McDowell, Molla, and Mycroft \cite{MR3816057} that the minimum degree condition is asymptotically $\frac{n}{3}$ when $s\in \{5, 6\}$, and $\frac{f_{RT}(s)}{2}n$ for $s\geq 7$.
Second, the $r=3$ and $s=4$ case was resolved by Han and Thoma \cite{tohan} (unpublished) that the minimum degree condition is asymptotically $\frac{n}{6}$.

Here, we resolve the remaining case, that is, when $r\geq 5$ is odd and $s=r+1$.
More precisely, we show that the answer is asymptotically $(1-\frac{6}{3r-1})n$ by
combining a lower bound construction (see Proposition \ref{prop1.112}), and an upper bound on the Ramsey--Tur\'{a}n density (degree version) for $K_{r+1}$ (see Proposition \ref{cl1.5}).

\begin{prop}\label{prop1.112}
For all $\eps,\alpha>0, r\in \mathbb{N}\backslash 2\N$ with $r\geq 5$, and sufficiently large $n$, there exists an $n$-vertex graph $G$ with $\delta(G)\geq \left(1-\tfrac{6}{3r-1}-\eps\right)n$ and $\alpha(G)\leq \alpha n$ such that $G$ is $K_{r+1}$-free, and without $K_{r}$-factor.
\end{prop}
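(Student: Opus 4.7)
The plan is to construct $G$ as the join of a small triangle-free Bollob\'as--Erd\H{o}s graph with a larger $K_{r-1}$-free Ramsey--Tur\'an extremal graph, with the sizes tuned so that the minimum degree is balanced at exactly $\tfrac{3r-7}{3r-1}n$ and a simple vertex-counting obstruction rules out any $K_r$-factor. Concretely, writing $r = 2m+1$ with $m \ge 2$ (so $r-1 \ge 4$ is even), I would partition $V(G) = V_1 \sqcup V_2$ with $|V_1| = \lfloor 6n/(3r-1)\rfloor$ and $|V_2| = n - |V_1|$; on $V_1$ I would place a triangle-free Bollob\'as--Erd\H{o}s graph $G_1$ with $\alpha(G_1) = o(n)$; on $V_2$ I would place a $K_{r-1}$-free graph $G_2$ with $\alpha(G_2) = o(n)$ and $\delta(G_2) \ge (f_{RT}(r-1) - \eps')|V_2|$, as provided by the standard degree-version Ramsey--Tur\'an extremal construction for $K_{r-1}$-free graphs; finally I would add every edge between $V_1$ and $V_2$.

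Next I would verify the four properties. Any independent set of $G$ lies inside one of the parts (because the two parts are completely joined), so $\alpha(G) = \max\{\alpha(G_1), \alpha(G_2)\} = o(n)$. Any clique uses at most $2$ vertices from $V_1$ (triangle-free) and at most $r-2$ from $V_2$ ($K_{r-1}$-free), so $\omega(G) \le r$ and $G$ is $K_{r+1}$-free. For the minimum degree: each $v \in V_1$ has $\deg_G(v) \ge |V_2| = \tfrac{3r-7}{3r-1}n - O(1)$, while each $v \in V_2$ has
\[
\deg_G(v) \;\ge\; \delta(G_2) + |V_1| \;\ge\; \Bigl(\tfrac{3r-13}{3r-7} - \eps'\Bigr)\tfrac{3r-7}{3r-1}n + \tfrac{6}{3r-1}n - O(1) \;=\; \tfrac{3r-7}{3r-1}n - O(\eps' n) - O(1),
\]
using $f_{RT}(r-1) = \tfrac{3r-13}{3r-7}$ since $r-1$ is even. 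Choosing $\eps'$ sufficiently small in terms of $\eps$ then yields $\delta(G) \ge (1 - \tfrac{6}{3r-1} - \eps)n$.

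Finally I would rule out a $K_r$-factor by a size mismatch on $V_1$. Every $K_r$ of $G$ meets $V_1$ in exactly $2$ vertices: writing $a = |K_r \cap V_1|$, we have $a \le 2$ and $r - a \le r - 2$, forcing $a = 2$. A $K_r$-factor would therefore cover exactly $2(n/r)$ vertices of $V_1$, whereas $|V_1| - \tfrac{2n}{r} = \tfrac{6n}{3r-1} - \tfrac{2n}{r} = \tfrac{2n}{r(3r-1)}$ is strictly positive of order $\Theta(n)$, so $\Theta(n)$ vertices of $V_1$ would remain uncovered, a contradiction. The main obstacle is really a modelling one: pinpointing the balance point $\beta = 6/(3r-1)$ for $|V_1|/n$ that simultaneously saturates both minimum-degree bounds, and verifying that this $\beta$ exceeds $2/r$ by a $\Theta(1)$ margin (namely $2/(r(3r-1))$) so that the factor-prevention argument survives rounding $|V_1|$ to an integer. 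The required Ramsey--Tur\'an degree-extremal inputs---the Bollob\'as--Erd\H{o}s graph for $V_1$ and the sharp $K_{r-1}$-free construction for $V_2$---are classical.
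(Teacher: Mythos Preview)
Your construction is correct and, once unpacked, is \emph{exactly} the paper's construction presented recursively. The paper partitions $V(G)$ into $(r+1)/2$ parts: two parts $V_1,V_2$ of size $\tfrac{4n}{3r-1}$ carrying a Bollob\'as--Erd\H{o}s graph on $V_1\cup V_2$, and $(r-3)/2$ parts $V_3,\dots,V_{(r+1)/2}$ of size $\tfrac{6n}{3r-1}$ each carrying a sparse triangle-free Ramsey graph, with complete joins between any two parts except within the pair $\{V_1,V_2\}$. Your two-part decomposition corresponds to taking one of the triangle-free parts as your $V_1$ and lumping the remaining $(r-1)/2$ parts together as your $V_2$; the resulting $G[V_2]$ is then precisely the standard Ramsey--Tur\'an extremal construction for $K_{r-1}$ with $r-1$ even, with the right minimum degree. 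The factor obstruction is also the same counting argument viewed from a different part: you show your $V_1$ is too large by $\tfrac{2n}{r(3r-1)}$, while the paper shows its $V_1\cup V_2$ is too small by $\tfrac{(r-3)n}{r(3r-1)}$.

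Two small points. First, a terminological slip: the graph you place on your $V_1$ should be an Erd\H{o}s-type triangle-free graph with sublinear independence number, not the Bollob\'as--Erd\H{o}s graph (which is $K_4$-free but certainly contains triangles; if $G[V_1]$ had triangles your ``exactly two vertices in $V_1$'' argument would fail). Second, when you invoke a ``standard degree-version Ramsey--Tur\'an extremal construction'' for $G_2$ with $\delta(G_2)\ge (f_{RT}(r-1)-\eps')|V_2|$, note that this is essentially what the paper's explicit construction establishes; the minimum-degree property ultimately rests on the near-regularity of the Bollob\'as--Erd\H{o}s graph, which both proofs use (the paper also tacitly assumes it in its $\delta(G)$ computation).
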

We put the proof of Proposition \ref{prop1.112} in Section \ref{sec2.1}.

The following proposition is an upper bound of Question \ref{pro1.11} for the case $s=r+1$ and $r\in \mathbb{N}\backslash 2\mathbb{N}$.

We include its proof in Section \ref{sec2.1}.

\begin{prop}\label{cl1.5}
Given $r\in \mathbb{N}\backslash 2\N$ and $\eps >0$, there exists $\alpha>0$ such that the following holds for sufficiently large $n$.
Let $G$ be an $n$-vertex $K_{r+1}$-free graph, and $\alpha(G)\le \alpha n$.
Then $\delta(G)<\left(1-\frac{6}{3r-1}+\eps\right)n$.
\end{prop}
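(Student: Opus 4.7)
The plan is to derive this upper bound directly from the classical Ramsey--Tur\'an edge-density theorem for even cliques. Set $s:=r+1$; since $r$ is odd, $s$ is even, and the theorem of Erd\H{o}s, Hajnal, Simonovits, S\'os and Szemer\'edi guarantees that for every $\eta>0$ there exist $\alpha_0=\alpha_0(\eta,s)>0$ and $n_0$ such that any $n$-vertex $K_{s}$-free graph $H$ with $\alpha(H)\le \alpha_0 n$ and $n\ge n_0$ satisfies
\[
e(H)\;\le\;\left(\tfrac{3s-10}{3s-4}+\eta\right)\binom{n}{2}.
\]
A direct calculation gives $\tfrac{3s-10}{3s-4}=\tfrac{3r-7}{3r-1}=1-\tfrac{6}{3r-1}$, which is precisely the density appearing in the statement. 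So the key observation is that the minimum-degree density we want to rule out is exactly the edge density governed by Erd\H{o}s--Hajnal--Simonovits--S\'os--Szemer\'edi for $K_{r+1}$.

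With this in hand I would fix $\eta:=\eps/4$, take $\alpha:=\alpha_0(\eta,r+1)$, and argue by contradiction: assume an $n$-vertex $K_{r+1}$-free graph $G$ with $\alpha(G)\le \alpha n$ and $n$ large satisfies $\delta(G)\ge\left(1-\tfrac{6}{3r-1}+\eps\right)n$. The handshake inequality $2e(G)\ge n\,\delta(G)$ yields
\[
e(G)\;\ge\;\left(1-\tfrac{6}{3r-1}+\eps\right)\tfrac{n^2}{2},
\]
while the Ramsey--Tur\'an upper bound above gives
\[
e(G)\;\le\;\left(1-\tfrac{6}{3r-1}+\tfrac{\eps}{4}\right)\tfrac{n(n-1)}{2}.
\]
Subtracting the two estimates, the gap between the leading $\eps$-terms is on the order of $\tfrac{3\eps}{4}\cdot\tfrac{n^2}{2}$, while the discrepancy caused by replacing $n^2$ with $n(n-1)$ in the upper bound is only $O(n)$. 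Thus the lower bound strictly exceeds the upper bound once $n$ is sufficiently large, which is the desired contradiction.

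There is no serious obstacle beyond identifying the correct classical input: the minimum-degree hypothesis is exchanged for an average-degree (and hence edge-count) lower bound, which then collides with the Erd\H{o}s--Hajnal--Simonovits--S\'os--Szemer\'edi theorem applied to the even clique $K_{r+1}$. The matching sharpness is provided by the construction of Proposition~\ref{prop1.112}, which (presumably) realises a nearly degree-regular $K_{r+1}$-free graph of the Bollob\'as--Erd\H{o}s type with sublinear independence number, ensuring that the edge-density upper bound above is also attained at the level of minimum degree.
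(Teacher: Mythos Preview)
Your proposal is correct and follows essentially the same approach as the paper: both assume the minimum degree lower bound, convert it to an edge-count lower bound via the handshake inequality, and obtain a contradiction with the classical Ramsey--Tur\'an density result for the even clique $K_{r+1}$ (the paper cites this as Erd\H{o}s--S\'os, Theorem~\ref{thm0.1}). The identification $1-\tfrac{6}{3r-1}=\tfrac{3r-7}{3r-1}$ is exactly the computation the paper makes.
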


It is interesting to look into the critical window and understand the boundary case, e.g., when $\delta(G) \ge (1-\frac{6}{3r-1})n.$

\subsection{Proof strategy}
Our proof makes use of the absorption method and builds on the techniques developed in \cite{MR3529107, MR3632565, MR3290271}.
The absorption method was introduced by R\"{o}dl, Ruci\'{n}ski, and Szemer\'{e}di about a decade ago in \cite{MR2500161}.
Since then, it has turned out to be an important tool for studying the existence of spanning structures in graphs, digraphs and hypergraphs.

Now we sketch the proof idea. The main tasks are to (i) build an absorbing set and (ii) cover almost all of the remaining vertices with a $K_{r}$-tiling.
For (i), we first need the following notion of absorbers and absorbing sets in \cite{MR4080942}.
Let $G$ be an $n$-vertex graph. Then

\begin{itemize}
  \item a subset $A\subseteq V(G)$ is a $\xi$-\emph{absorbing set} in $G$ for some constant $\xi$ if for any subset $U\subseteq V(G)\backslash A$ of size at most $\xi n$ and $|A\cup U|\in r\mathbb{N}$, $G[A\cup U]$ contains a $K_{r}$-factor.
  \item for any $S\subseteq V(G)$ of size $r$ and an integer $t$, we call a subset $A_{S}\subseteq V(G)\backslash S$ a $(K_{r}, t)$-\emph{absorber} for $S$ if $|A_{S}|=rt$ and both $G[A_{S}]$ and $G[A_{S}\cup S]$ contain a $K_{r}$-factor.
\end{itemize}

A new approach due to Nenadov and Pehova \cite{MR4080942} guarantees an absorbing set provided that
\begin{quote}
\emph{every $r$-set $S$ has $\Omega(n)$ vertex-disjoint $(K_{r}, t)$-absorbers for $S$.}
\end{quote}
Here we instead build a partition $V(G)=B\cup U$ satisfying that
\begin{quote}
\emph{$|B|=o(n)$ and every $r$-set in $U$ has $\Omega(n)$ vertex-disjoint absorbers.}
\end{quote}
Similar ideas already appeared in our recent works (e.g.~\cite{CHKWY, CHWY202207}).
Here the arguments reduce to finding an absorbing set $A$ in $G[U]$ and then covering vertices of $B$ by a small $K_{r}$-tiling vertex disjoint from $A$ so as to yield a desired absorbing set.
To achieve the desired partition $B\cup U$, our proof adopts the lattice-based absorption method developed by the second author \cite{MR3632565}.\medskip

Our second task is to establish an almost perfect $K_r$-tiling.
Following a standard application of the regularity lemma, we obtain an $\varepsilon$-regular partition and then build a reduced multigraph where two vertices are connected by a double-edge if the corresponding clusters form a regular pair of density larger than $\frac{1}{2}$.
We find that in the reduced multigraph, $K^{=}_2$ and $K_3$ can be used to model how $K_r$ is embedded into a collection of clusters with sublinear $K_{r-2}$-independence number.
To play with this, we build a $\{K^{=}_2, K_3\}$-tiling (see Lemma~\ref{lem4.4}) that covers almost all vertices in the reduced multigraph.
Meanwhile, for every $H\in \{K^{=}_2, K_3\}$, we define a suitably-chosen auxiliary graph $Q_i$ for $i\in \{1, 2\}$.
In particular, $Q_i$ has a $K_{r}$-factor, and thus it suffices to find an almost perfect $\{Q_1, Q_2\}$-tiling in the same context (see Lemma~\ref{lem2.17} and Lemma~\ref{lem2.171}).

Note that Knierm and Su \cite{MR4193066} also used $K^{=}_2$ and $K_3$ to model how $K_{4}$ is embedded into a graph $G$ when $\alpha(G)=o(n)$.
Within this framework, they established a fractional analogue of Lemma \ref{lem4.4} by utilizing the Bollob\'{a}s--Erd\H{o}s graph via an elegant proof.
However, since the Ramsey--Tur\'{a}n problem is still not well understood for $K_{\ell}$-independence number, it is currently unclear to us how to adapt the ideas of Knierm and Su into our proof.
Luckily, in our (multi-graph) setting, we can find an almost perfect tiling with tiles $K^{=}_2$ and $K_3$ directly (instead of their fractional version) under the $\alpha_{r-2}(G)=o(n)$ for $r\geq 4$.

\subsection{Basic notation and organization}
We first introduce some notion throughout the paper.
For a graph $G:= G(V, E)$, we write $v(G)=|V(G)|$ and $e(G)=|E(G)|$. For $U\subseteq V$, let $G[U]$ be the induced subgraph of $G$ on $U$.
Let $G-U:=G[V\backslash U]$.
For two subsets $A, B\subseteq V(G)$, we use $E_G(A, B)$ to denote the set of edges joining $A$ and $B$, and $e_G(A, B):=|E_G(A, B)|$.
We write $N_{G}(v)$ for the set of neighbors of $v$ in $G$.
For convenience, we use $d_{G}(v)$ to denote the number of edges which contain $v$ in $G$. We omit the subscript $G$ if the graph is clear from the context.
For a vertex set $W$ and a positive integer $\ell$, we write $\tbinom{W}{\ell}$ to denote the set of all $\ell$-\emph{subsets} of distinct elements of $W$.
For any integers $a\leq b$, let $[a, b]:=\{i\in \mathbb{Z}: a\leq i\leq b\}$ and $[a]:= [1, a]$.

When we write $\beta\ll \gamma$, we always mean that $\beta, \gamma$ are constants in $(0, 1)$, and $\beta\ll \gamma$ means that there exists $\beta_{0}=\beta_{0}(\gamma)$ such that the subsequent arguments hold for all $0<\beta\leq \beta_{0}$. Hierarchies of other lengths are defined analogously.

The rest of the paper is organized as follows.
In Section \ref{sec2.1}, we introduce three lower bound constructions (Proposition \ref{prop1.1212}, \ref{prop1.112}, and \ref{cl1.5}).
In Section \ref{sec2}, we prove our second result and introduce the regularity lemma.
In Section \ref{sec3}, our main work is to find a tiling with certain structures in the reduced multigraph and establish two embedding lemmas.
In Section \ref{sec5}, we briefly present some necessary results and tools to introduce the latticed-based absorbing method and prove Lemma \ref{lem3.3}.

\section{Proofs of Proposition \ref{prop1.1212}, \ref{prop1.112}, and \ref{cl1.5}}\label{sec2.1}
Before the proof of Proposition \ref{prop1.1212}, we first introduce a useful construction by Erd\H{o}s \cite{ER1959}.
\begin{lemma}[\cite{ER1959}]\label{lem2.122223}
For any $\alpha, \varepsilon>0$, there exists an $n$-vertex triangle-free graph $G$ for sufficiently large $n$ such that $\alpha(G)<\alpha n$ and $\frac{\varepsilon}{2}n\leq d(v)\leq \frac{3\varepsilon}{2}n$.
\end{lemma}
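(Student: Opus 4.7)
The plan is to construct $G$ as a sparsified random blow-up of a small, dense triangle-free template. Given $\alpha,\varepsilon>0$, I first fix a constant $m=m(\alpha,\varepsilon)$ and a (nearly) $d_H$-regular triangle-free graph $H$ on $m$ vertices with $d_H\ge \varepsilon m$ and $\alpha(H)\le (\alpha/4)m$; such templates are supplied by the classical Ramsey--Tur\'an theory for $K_3$ (Erd\H{o}s, Szemer\'edi, Bollob\'as--Erd\H{o}s), which guarantees dense triangle-free graphs with sublinear independence number. Then partition $[n]=V_1\sqcup\cdots\sqcup V_m$ into parts of size $n/m$, and for each $ij\in E(H)$ insert each potential edge between $V_i$ and $V_j$ independently with probability $p:=\varepsilon m/d_H\in(0,1]$; add no other edges. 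Triangle-freeness is automatic, since any triangle in $G$ would correspond to a triangle in $H$.

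For the degree condition, every $v\in V_i$ has degree equal to a sum of $d_H$ independent $\mathrm{Bin}(n/m,p)$ variables with mean $\varepsilon n$, so by Chernoff and a union bound over the $n$ vertices, all degrees lie in $[\varepsilon n/2,\,3\varepsilon n/2]$ with probability $1-o(1)$. For the independence condition, I would consider any candidate $T\subseteq V(G)$ with $|T|=\alpha n$ and set $I:=\{i\in[m]:|T\cap V_i|\ge \alpha n/(4m)\}$. A quick pigeonhole count yields $|I|\ge (3\alpha/4)m>\alpha(H)$, so $I$ necessarily contains some edge $ij\in E(H)$. Independence of $T$ then forces the random bipartite graph between $V_i$ and $V_j$ to have no edges between $T\cap V_i$ and $T\cap V_j$, an event of probability at most $\exp(-p(\alpha n/(4m))^2)=\exp(-\Omega(n^2))$. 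Union-bounding over the at most $\binom{n}{\alpha n}\le 2^n$ choices of $T$ defeats this for $n$ large, yielding $\alpha(G)<\alpha n$ with high probability.

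The main obstacle is pinning down a template $H$ that is simultaneously dense enough ($d_H\ge \varepsilon m$, so $p\le 1$) and has small enough relative independence number (so the pigeonhole step actually lands on an $H$-edge). This is exactly the regime addressed by the Ramsey--Tur\'an density results for $K_3$; once such an $H$ exists for some sufficiently large constant $m=m(\alpha,\varepsilon)$, the remainder is a routine Chernoff-plus-union-bound computation. Together the two high-probability events intersect, so a graph $G$ with all the required properties exists.
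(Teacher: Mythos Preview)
The paper does not prove this lemma; it simply cites it from Erd\H{o}s and records the notation $\mathbb{ER}(n,\alpha,\varepsilon)$ for later use. So there is no ``paper's proof'' to compare against, and I will just comment on your argument.

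There is a genuine gap in the template step. You require a triangle-free $H$ on $m$ vertices that is nearly $d_H$-regular with $d_H\ge \varepsilon m$ and $\alpha(H)\le(\alpha/4)m$. But in any triangle-free graph the neighbourhood of every vertex is independent, so $d_H\le\Delta(H)\le\alpha(H)\le(\alpha/4)m$. Thus your two constraints on $H$ are compatible only when $\varepsilon\le\alpha/4$; for larger $\varepsilon$ (the lemma is nonvacuous up to $\varepsilon<2\alpha$, by the same $\Delta\le\alpha$ observation applied to $G$) no such template exists. Your appeal to ``Ramsey--Tur\'an theory for $K_3$'' does not rescue this: the Ramsey--Tur\'an density of $K_3$ is $0$ precisely because triangle-free graphs with $\alpha=o(m)$ are forced to have $o(m^2)$ edges, the opposite of what you want.

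Even in the range $\varepsilon\le\alpha/4$, the argument is close to circular: a triangle-free, near-regular graph on $m$ vertices with $\alpha(H)\le(\alpha/4)m$ and degrees about $\varepsilon m$ is exactly an instance of the lemma you are trying to prove (with slightly different parameters and a single value of $m$). The blow-up step genuinely upgrades ``some large $m$'' to ``all large $n$'', and your Chernoff/union-bound analysis for degrees and independence is correct, but you still need an independent construction of the seed $H$. A cleaner route is to start from any triangle-free $H$ with $\alpha(H)<(\alpha-\delta)m$ (Erd\H{o}s's probabilistic deletion argument gives this with no degree control), observe that $\Delta(H)\le\alpha(H)$ automatically, pass to a spanning near-regular subgraph if needed, and then set $p$ accordingly; this works, but only in the regime $\varepsilon<\alpha$, and you should say so explicitly.
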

We denote the construction as above by $\mathbb{ER}(n, \alpha, \varepsilon)$.

\begin{proof}[Proof of Proposition \ref{prop1.1212}]
Given $\alpha, \varepsilon>0$ and $r\in \mathbb{N}$, let $\mu:=2r\varepsilon$.
We shall choose $\frac{1}{n}\ll\alpha, \varepsilon, \mu$.
We divide the proof into the following two cases.
Let $r:=2\ell+1$ for some $\ell\in \mathbb{N}$.
Let $G$ be a complete $(\ell+1)$-partite graph on $n$ vertices, where $V_0$ has size $\frac{n}{r}-1$, $V_1$ has size $\frac{2n}{r}+1$, and the remaining parts $V_2, \dots, V_{\ell}$ each have size $\frac{2n}{r}$.
To complete the construction, we shall put a copy of $\mathbb{ER}(|V_i|, \alpha, \mu)$ on the vertex subset $V_i$ for each $i=0, 1, \dots, \ell$.
We claim that this graph does not have a $K_r$-factor.
Indeed, each $K_r$ has at most two vertices in $V_1$, and a $K_r$-tiling can have at most $\frac{n}{r}$ vertex-disjoint copies of $K_r$.
The minimum degree of this construction is
$n-|V_1|+\frac{\mu}{2}|V_1|\geq (1-\frac{2}{r}+\frac{\mu}{2r})n=(1-\frac{2}{r}+\varepsilon)n$, and $\alpha(G)\leq \alpha n$.
Not that this construction is also $K_{r+2}$-free.
For the $r=2\ell$ case, let $G$ be a complete $\ell$-partite graph on $n$ vertices, where $V_1$ has size $\frac{2n}{r}+1$, $V_2$ has size $\frac{2n}{r}-1$, and the remaining parts $V_3, \dots, V_{\ell}$ each have size $\frac{2n}{r}$.
For each $i\in [\ell]$, we put a copy of $\mathbb{ER}(|V_i|, \alpha, \mu)$ on the vertex subset $V_i$.
Again, this construction has minimum degree at least $(1-\frac{2}{r}+\frac{\mu}{2r})n=(1-\frac{2}{r}+\varepsilon)n$, $\alpha(G)\leq \alpha n$, and no $K_r$-factor.
Not that this construction is also $K_{r+1}$-free.
\end{proof}

Before the proof of Proposition \ref{prop1.112}, we first introduce some results which are useful in the following constructions.
\begin{lemma}[\cite{ER1959}]\label{lem2.1}
For any $\alpha>0$ and $k\in \mathbb{N}$ with $k\geq 3$, there exists an $n$-vertex graph $G$ for sufficiently large $n$ such that $\alpha(G)<\alpha n$ and $g(G)>k$.
\end{lemma}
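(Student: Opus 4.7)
The plan is to reprove this classical result of Erd\H{o}s by the probabilistic method. Fix $\alpha>0$ and $k\geq 3$, choose a parameter $\theta\in(0,1/k)$ (for concreteness $\theta:=1/(2k)$), and for $N$ to be specified later consider the binomial random graph $G(N,p)$ with $p:=N^{\theta-1}$.

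First I would control the short cycles. Let $X$ denote the number of cycles of length at most $k$ in $G(N,p)$. A standard first-moment estimate gives
\[
\mathbb{E}[X]\;\leq\;\sum_{i=3}^{k}\frac{N^{i}p^{i}}{2i}\;\leq\;k\,N^{k\theta}\;=\;o(N),
\]
so by Markov's inequality, with probability at least $3/4$ we have $X\leq N/100$.

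Next I would bound the independence number. Set $t:=\lceil \alpha N/2\rceil$. The probability that $G(N,p)$ contains some independent $t$-set is at most
\[
\binom{N}{t}(1-p)^{\binom{t}{2}}\;\leq\;\exp\!\left(t\ln N - p\binom{t}{2}\right),
\]
which tends to $0$ as $N\to\infty$, because $p\binom{t}{2}=\Theta(N^{1+\theta})$ dwarfs $t\ln N=O(N\ln N)$. Hence with probability at least $3/4$, $\alpha(G(N,p))<\alpha N/2$.

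By the union bound there exists a realisation $G_{0}$ satisfying both events. Deleting one vertex from each short cycle yields a subgraph $G_{1}$ on at least $0.99\,N$ vertices with $g(G_{1})>k$ and $\alpha(G_{1})<\alpha N/2$. Finally, choosing $N\in[n/0.99,\,2n]$ (e.g.\ $N=\lceil 3n/2\rceil$) and taking any $n$-vertex induced subgraph $G\subseteq G_{1}$ gives $g(G)\geq g(G_{1})>k$ and $\alpha(G)\leq \alpha(G_{1})<\alpha N/2\leq \alpha n$, as required. The only delicate point is balancing the two roles of $\theta$: it must be small enough that $k\theta<1$ to keep $\mathbb{E}[X]=o(N)$, yet $p=N^{\theta-1}$ must still decay slowly enough that $pt\gg \ln N$. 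Any fixed $\theta\in(0,1/k)$ satisfies both, so this balancing is routine rather than a genuine obstacle.
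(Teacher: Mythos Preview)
Your proposal is correct and is exactly the classical probabilistic deletion argument of Erd\H{o}s; the paper itself does not reprove this lemma but simply cites it from \cite{ER1959}, so there is nothing to compare against beyond noting that you have reproduced the original method.
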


The following celebrated constructions and claims are proved by Bollob\'{a}s and Erd\H{o}s \cite{boer1976}.

\begin{lemma}[\cite{boer1976}, Bollob\'{a}s--Erd\H{o}s construction]\label{cst1}
For all $\eps,\gamma>0$ there is a $n_0$ such that for $n\ge n_0$ there is a graph $G$ on $2n$ vertices with a balanced partition $V(G)=V_1\cup V_2$ satisfying the following properties:

\item $({\rm 1})$ $G[V_1]$ is isomorphic to $G[V_2]$ and they are triangle-free;\\

\item $({\rm 2})$ $G$ is $K_4$-free;\\

\item $({\rm 3})$ $G[V_1, V_2]$ has density at least $1/2-\eps$;\\
\item $({\rm 4})$ $\alpha(G)\le \gamma n$.

\end{lemma}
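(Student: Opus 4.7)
My plan is to realize $G$ via the classical geometric construction of Bollob\'{a}s and Erd\H{o}s on a high-dimensional sphere. Fix a dimension $d=d(\eps,\gamma)$ sufficiently large, and let $V_1, V_2\subset S^{d-1}$ be two disjoint, well-distributed $n$-point sets chosen as isometric copies of each other (so that the isomorphism in condition (1) is automatic). Edges of $G$ are declared via two geometric thresholds $\tau,\sigma>0$: for $u,v$ lying in the same part set $uv\in E(G)$ iff $\langle u,v\rangle<-1+\tau$ (nearly antipodal), while for $u\in V_1, v\in V_2$ set $uv\in E(G)$ iff $\langle u,v\rangle$ lies in a window tuned to produce bipartite density $\tfrac{1}{2}-\eps$.

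I would first verify that $G[V_i]$ is triangle-free. If $x,y,z\in V_i$ formed a triangle, each pairwise inner product would be below $-1+\tau$, yet expanding $\|x+y+z\|^2\ge 0$ gives $\langle x,y\rangle+\langle x,z\rangle+\langle y,z\rangle\ge -\tfrac{3}{2}$, which fails once $\tau$ is small. Next I would bound the independence number: an independent set contained in $V_i$ must avoid all near-antipodal pairs, so it lies inside a union of small spherical caps, and standard packing estimates yield $|I\cap V_i|\le \gamma n/2$ provided $d$ is chosen large and $V_i$ is sufficiently uniformly spread; the across-part edge criterion handles independent sets that use both parts. The density bound $e_G(V_1,V_2)\ge (\tfrac{1}{2}-\eps)n^2$ follows from concentration of the inner product for a uniform-like configuration on $S^{d-1}$, so that the across-part window catches a $(\tfrac{1}{2}-\eps)$-fraction of pairs.

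The main obstacle is property (2), the $K_4$-freeness. Since each $G[V_i]$ is triangle-free, any $K_4\subset G$ must split $2$-$2$ across the parts, say $x_1,x_2\in V_1$ and $y_1,y_2\in V_2$. Near-antipodality inside $V_1$ and $V_2$ forces $x_1\approx -x_2$ and $y_1\approx -y_2$, after which the four across-part edges require $\langle x_i,y_j\rangle$ to land in the prescribed window for each $i,j\in\{1,2\}$. But the identity $\langle x_1,y_1\rangle+\langle x_1,y_2\rangle=\langle x_1,y_1+y_2\rangle\approx 0$ forces two of the four values to nearly cancel, which is incompatible with the window being bounded away from $0$. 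Choosing the window so that it is both well-separated from $0$ (so the $K_4$-obstruction bites) and simultaneously captures a $\tfrac{1}{2}-\eps$ proportion of across-part pairs is the delicate balancing act: one typically works with an appropriately biased sample on $S^{d-1}$, or equivalently with points in a ball of smaller radius, rather than a naive uniform spherical sample. Once $d$, $\tau$, $\sigma$, and the distribution of $V_1$ are chosen compatibly, the four conclusions (1)--(4) all follow, and this is where I expect the real work to lie.
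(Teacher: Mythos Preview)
The paper does not prove this lemma; it is quoted as a black box from Bollob\'as and Erd\H{o}s (1976) and used only inside the construction for Proposition~\ref{prop1.112}. Your sketch is essentially the skeleton of that original construction, so there is no in-paper argument to compare against beyond noting that you are reconstructing the cited result.

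Two details in your outline do not match the actual Bollob\'as--Erd\H{o}s argument. First, the cross-part rule is a one-sided threshold $\langle u,v\rangle>\sigma$, not a two-sided window, and both thresholds $\tau,\sigma$ are taken on the scale $c/\sqrt d$ rather than as fixed constants. With that scaling the $K_4$ obstruction is cleaner than you suggest: from $|x_1+x_2|,|y_1+y_2|<\sqrt{2\tau}$ one gets $\sum_{i,j}\langle x_i,y_j\rangle=\langle x_1+x_2,y_1+y_2\rangle<2\tau$, which already contradicts all four cross inner products exceeding $\sigma$ once $\sigma\ge\tau/2$. Since cross inner products of uniform points on $S^{d-1}$ concentrate at scale $1/\sqrt d$, the bipartite density is $\tfrac12-\eps$ automatically, and no ``biased sample'' or shrunken ball is needed. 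Second, the phrase ``lies inside a union of small spherical caps, and standard packing estimates\ldots'' is the wrong picture: avoiding near-antipodal pairs is a diameter constraint, not a packing constraint, and bounding $\alpha(G)$ in the original paper is done via a concentration-of-measure argument on $S^{d-1}$ rather than a cap-packing count.
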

More often we denote the construction as above by $\mathbb{BE}(n, \gamma, \varepsilon)$.
We also tacitly assume that $n$ is large, and we omit floors and ceilings, when they do not affect our arguments.

\begin{proof}[Proof of Proposition \ref{prop1.112}]
Given $\eps,\alpha>0$, $r\in \mathbb{N}\backslash2\mathbb{N}$ and $r\geq 5$, we choose $\frac{1}{n}\ll \alpha,\eps$.
Let $G$ be an $n$-vertex graph with a partition $V(G)=V_{1}\cup V_{2}\cup \cdots \cup V_{\frac{r+1}{2}}$, $|V_{1}|=|V_{2}|=\frac{4}{3r-1}n$ and $|V_{i}|=\frac{6}{3r-1}n$ for $i\in [3, \frac{r+1}{2}]$.
Let $G[V_{i}, V_{j}]$ be a complete bipartite graph if either $i\notin [2]$ or $j\notin [2]$.
Let $G[V_{i}]$ be a subgraph with $\alpha(G[V_{i}])\leq \alpha n$ and $g(G[V_{i}])\geq 4$ given by Lemma \ref{lem2.1} for each $i\in [3, \frac{r+1}{2}]$.
Let $G[V_{1}\cup V_{2}]$ be a Bollob\'{a}s--Erd\H{o}s graph  given by Lemma~\ref{cst1}. As $G[V_{1}\cup V_{2}]$ is $K_4$-free and each $G[V_i]$ is triangle-free, one can easily observe that $G$ is $K_{r+1}$-free and $\de(G)\ge n-|V_{1}|-|V_{2}|+(\tfrac{1}{2}-\eps)|V_{1}|\geq(1-\frac{6}{3r-1}-\eps)n$ and $\alpha(G)\le \alpha n$.

Suppose for a contradiction that $G$ contains a $K_{r}$-factor, say $\mathcal{T}$. Then every copy of $K_{r}$ in $\mathcal{T}$, say $H$, satisfies $|V(H)\cap (V_{1}\cup V_{2})|=3$ because each $G[V_i]$ is triangle-free and $G[V_{1}\cup V_{2}]$ is $K_4$-free.
This implies $|V_{1}\cup V_{2}|=3|\mathcal{T}|=\tfrac{3}{r}n>\frac{8}{3r-1}n$, a contradiction.
\end{proof}

The rest of this section was devoted to prove Proposition \ref{cl1.5}.
In the proof, we involve a result of Erd\H{o}s and S\'{o}s \cite{EST1979}.
\begin{theorem}[\cite{EST1979}]\label{thm0.1}
Given $\varepsilon>0$ and $\ell\in 2\mathbb{N}$, there exists an absolute constant $\alpha$ such that, for sufficiently large $n$, $RT(n, K_{\ell}, \alpha n)=\frac{3\ell-10}{6\ell-8}n^{2}+\varepsilon n^{2}$.
\end{theorem}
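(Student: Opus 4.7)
The statement has matching lower and upper bounds, which I would prove separately. The lower bound is a straightforward generalisation of the Bollob\'{a}s--Erd\H{o}s construction already used elsewhere in this paper, while the upper bound is the harder, classical half.

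\textbf{Lower bound construction.} Write $\ell = 2k$ with $k\geq 2$. Partition $[n]$ into parts $V_1, V_2$ of size $\tfrac{4n}{3\ell-4}$ each, and parts $U_1, \dots, U_{k-2}$ of size $\tfrac{6n}{3\ell-4}$ each. On $V_1 \cup V_2$ I place a Bollob\'{a}s--Erd\H{o}s graph (Lemma~\ref{cst1}), so that $G[V_1\cup V_2]$ is $K_4$-free with $\alpha = o(n)$ and bipartite density at least $\tfrac{1}{2}-o(1)$. On each $U_i$ I place a triangle-free graph with $\alpha = o(n)$ from Lemma~\ref{lem2.1}. Between every pair of distinct parts I install a complete bipartite graph. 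The resulting $G$ has $\alpha(G) = o(n)$ and is $K_\ell$-free, since any clique uses at most $3$ vertices of $V_1\cup V_2$ (the Bollob\'{a}s--Erd\H{o}s graph is $K_4$-free) and at most $2$ vertices of each $U_i$, totalling at most $3+2(k-2)=\ell-1$. The part sizes are chosen to maximise $\tfrac{1}{2}a^2 + 2ab(k-2) + \binom{k-2}{2}b^2$ subject to $2a + (k-2)b = n$; with $a = \tfrac{4n}{3\ell-4}$ and $b = \tfrac{6n}{3\ell-4}$, an algebraic check using $9k^2-21k+10=(3k-2)(3k-5)$ gives total edge count $\tfrac{3\ell-10}{6\ell-8}n^2 - o(n^2)$, as required.

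\textbf{Upper bound.} Suppose $G$ is $K_\ell$-free with $\alpha(G)\leq \alpha n$ and $\alpha\ll\varepsilon$. Apply the Szemer\'{e}di regularity lemma with auxiliary parameters $\eps'\ll d\ll \varepsilon$, obtaining an equitable $\eps'$-regular partition $V_0, V_1, \dots, V_M$. Build a reduced \emph{multigraph} $R^{\ast}$ on $[M]$: place a single edge on $ij$ if $(V_i,V_j)$ is $\eps'$-regular of density in $[d,\tfrac{1}{2}]$, and a double edge if the density exceeds $\tfrac{1}{2}$. The key embedding principle is that a double edge $ij$ in $R^{\ast}$, together with $\alpha(G)=o(n)$, forces a triangle across the pair $(V_i,V_j)$: a regular bipartite pair of density $>\tfrac{1}{2}$ whose two sides have sublinear independence number must contain many triangles, by an Andr\'{a}sfai--Erd\H{o}s--S\'{o}s-type argument. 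More generally, any ``weighted clique of capacity $\ell$'' in $R^{\ast}$---where each cluster contributes $1$ and each double edge adds an additional $1$ to the capacity---embeds in $G$ as a genuine $K_\ell$. Hence $R^{\ast}$ must be weighted $K_\ell$-free.

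\textbf{Weighted Tur\'{a}n step and main obstacle.} The final ingredient is to show that every weighted $K_\ell$-free multigraph on $M$ vertices has total weighted edge count at most $\tfrac{3\ell-10}{6\ell-8}M^2+o(M^2)$, with the extremum attained by the blueprint of the lower-bound construction. Translating back through the counting lemma---and absorbing the contributions of $V_0$, irregular and low-density pairs, and within-cluster edges into $o(n^2)$---yields $e(G)\leq \tfrac{3\ell-10}{6\ell-8}n^2 + \varepsilon n^2$. The main obstacle lies precisely in this weighted Tur\'{a}n calculation: one must identify the extremal structure of weighted $K_\ell$-free multigraphs, which I would approach via induction on $\ell$ together with a stability-style analysis to rule out exotic multiplicity patterns. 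The parity of $\ell$ is essential here because the ``doubling'' provided by the Bollob\'{a}s--Erd\H{o}s graph corresponds to a pair of clusters jointly hosting $K_3$'s, which balances only when $\ell$ is even.
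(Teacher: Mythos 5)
First, note that the paper does not prove this statement at all: it is quoted as a black-box result of Erd\H{o}s and S\'{o}s \cite{EST1979} (the determination of $RT(n,K_{\ell},o(n))$ for even $\ell$), and it is only ever applied in the short proof of Proposition \ref{cl1.5}. So there is no internal proof to compare against; the question is whether your sketch would stand on its own as a proof of the cited theorem, and it would not. Your lower-bound construction is fine and is indeed the standard extremal example: the arithmetic with $a=\frac{4n}{3\ell-4}$, $b=\frac{6n}{3\ell-4}$ checks out, the graph is $K_{\ell}$-free because a clique meets the Bollob\'{a}s--Erd\H{o}s block in at most $3$ vertices and each triangle-free block in at most $2$, and $\alpha(G)=o(n)$ since the blocks are completely joined.

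The gap is in the upper bound, which is the actual content of the theorem. You reduce to a ``weighted Tur\'{a}n'' statement for the reduced multigraph and then declare that statement to be ``the main obstacle,'' offering only a plan (``induction on $\ell$ together with a stability-style analysis''); leaving the essential step unproved means no proof has been given. Moreover, the capacity rule feeding into that step is stated incorrectly: it is not true that each double edge of a reduced clique adds $1$ to the embeddable clique size. With $\alpha(G)=o(n)$ one can place two vertices in a cluster, but two clusters can each receive two vertices only if the pair between them has density above $\tfrac12$ (otherwise the Bollob\'{a}s--Erd\H{o}s graph blocks the cross-$K_4$); hence the clusters receiving two vertices must span a clique of double edges, and the correct forced clique size is $|T|$ plus the largest double-edge clique inside $T$, not $|T|$ plus the number of double edges. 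For example, a star of three double edges inside a reduced $K_4$ has capacity $7$ under your rule but only forces $K_6$. Consequently the class of multigraphs on which you would run the extremal computation is wrong, and the deduction ``$G$ is $K_{\ell}$-free $\Rightarrow R^{\ast}$ is weighted $K_{\ell}$-free'' does not follow as stated. Repairing this requires both the correct embedding dichotomy and the genuinely hard extremal analysis of such multigraph configurations, which is precisely the content of the cited theorem.
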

\begin{proof}[Proof of Proposition \ref{cl1.5}]
Given $r\in \mathbb{N}\backslash 2\N$, we shall choose $\frac{1}{n}\ll \alpha\ll\varepsilon.$
For the sake of contradiction, assume that $\delta(G)\geq \left(\frac{3r-7}{3r-1}+\eps\right)n$.
Then $e(G)\geq \left(\frac{3r-7}{6r-2}+\eps \right)n^{2}>RT(n, K_{r+1}, \alpha n)$.
Applying Theorem \ref{thm0.1} on $G$ with $\ell=r+1\in 2\mathbb{N}$,
there exists a copy of $K_{r+1}$ in $G$, a contradiction.
\end{proof}

\section{Main tools}\label{sec2}

In this section, we first introduce the central lemmas and explain how they work together to give the proof of Theorem \ref{thm7.2}.
A crucial and necessary step in our proof is to find a $K_{r}$-tiling in the graph $G$ which covers all but a small set of vertices.
The following result guarantees the existence of such a $K_{r}$-tiling, whose proof will be presented in Section \ref{sec3}.

\begin{lemma}[]\label{lem3.1}
Given $r\in \mathbb{N}$ with $r\ge 4$ and $\mu, \delta>0$, there exists $\alpha>0$ such that the following holds for sufficiently large $n$.
Let $G$ be an $n$-vertex graph with $\delta(G)\geq \left(\frac{1}{2}+\mu\right)n$ and $\alpha_{r-2}(G)\leq \alpha n$.
Then $G$ contains a $K_{r}$-tiling that covers all but at most $\delta n$ vertices.
\end{lemma}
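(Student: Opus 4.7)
The strategy is the one sketched in the introduction: regularize $G$, reduce to a $\{K^{=}_2, K_3\}$-tiling problem in the reduced multigraph, and lift each tile to an almost-perfect $K_r$-tiling of the associated clusters via the two embedding lemmas.

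I would begin by applying Szemer\'edi's regularity lemma to $G$ with parameters chosen so that $\alpha \ll \varepsilon \ll d \ll \mu, \delta$, obtaining an equitable partition $V(G) = V_0 \cup V_1 \cup \cdots \cup V_M$ with $|V_0| \le \varepsilon n$ and $|V_i| = m$ for $i \in [M]$. I then form a reduced multigraph $R$ on $[M]$ by placing a single edge $ij$ whenever $(V_i, V_j)$ is $\varepsilon$-regular of density at least $d$, and a double edge whenever the density exceeds $1/2 + d$. A routine degree-count using $\delta(G) \ge (1/2+\mu)n$ shows that, after deleting the few clusters with too many irregular neighbours, the double-edge submultigraph $R^=$ has minimum degree at least $(1/2 + \mu/2)M$. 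Moreover, the hypothesis $\alpha_{r-2}(G) \le \alpha n$ transfers to an appropriate $K_{r-2}$-sparseness condition on $R$: any $K_{r-2}$-free set of $\Omega(M)$ clusters in $R$ would, by selecting sufficiently many typical vertices from each cluster, yield a $K_{r-2}$-free vertex set of size $\Omega(n)$ in $G$, contradicting the hypothesis.

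Next, I would invoke Lemma~\ref{lem4.4} on $R$ to obtain a $\{K^{=}_2, K_3\}$-tiling $\mathcal{T}$ covering all but $o(M)$ of the clusters. For each tile $T \in \mathcal{T}$, I apply the relevant embedding lemma --- Lemma~\ref{lem2.17} when $T$ is a $K^{=}_2$ and Lemma~\ref{lem2.171} when $T$ is a $K_3$ --- to produce a $K_r$-tiling of the union of clusters in $T$ missing at most $\delta m / 2$ vertices. Taking the union over $T \in \mathcal{T}$ yields the desired $K_r$-tiling of $G$; the uncovered vertices lie in $V_0$, in clusters missed by $\mathcal{T}$, or in the leftover of some embedding, and they total at most $\varepsilon n + o(n) + (\delta/2)n \le \delta n$ for sufficiently small parameters.

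The main obstacle is the embedding step. The auxiliary graphs $Q_1$ and $Q_2$ must be designed so that a single copy of $Q_i$ inside a regular pair (resp.\ triple) of clusters of density above $1/2$ can be extended to a full $K_r$, and this extension is precisely where the bound $\alpha_{r-2}(G) \le \alpha n$ comes in, providing the missing $r-2$ or $r-3$ vertices that complete a $K_2$ or $K_3$ inside the tile to a $K_r$. Making this extension work uniformly across a near-perfect $\{Q_1, Q_2\}$-tiling, preserving vertex-disjointness and leaving only $o(m)$ vertices per tile, is the technical heart of the argument that Lemma~\ref{lem2.17} and Lemma~\ref{lem2.171} are built to handle.
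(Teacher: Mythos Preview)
Your outline matches the paper's proof: regularize, form the reduced multigraph $R$, apply Lemma~\ref{lem4.4} to obtain an almost-perfect $\{K^=_2, K_3\}$-tiling of $R$, then greedily iterate Lemmas~\ref{lem2.17} and~\ref{lem2.171} inside each tile until fewer than $\varepsilon m$ vertices remain per cluster. Two slips worth correcting. First, the degree bound you actually need (and the one Lemma~\ref{lem4.4} uses) is on the \emph{multigraph} $R$ with double edges counted twice, giving $\delta(R)\ge(1+\mu)M$ as in Fact~\ref{fact2.5}; your claim that the double-edge subgraph $R^=$ has minimum degree $(1/2+\mu/2)M$ is false in general --- a cluster could have many single-edge neighbours of density just below $1/2+\beta$ and only about $\mu M$ double-edge neighbours. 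Second, the hypothesis $\alpha_{r-2}(G)\le\alpha n$ does \emph{not} transfer to a $K_{r-2}$-sparseness condition on $R$ (a copy of $K_{r-2}$ can, and typically does, live inside a single cluster), nor is any such transfer needed: the paper uses $\alpha_{r-2}(G)\le\alpha n$ only inside the embedding lemmas, applied directly to subsets of $V(G)$. Neither slip affects the overall architecture, which is correct.
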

%
Lemma \ref{lem3.3} provides an absorbing set in the graph $G$, whose proof can be found in Section \ref{sec5}.

\begin{lemma}[]\label{lem3.3}
Given $r\in \mathbb{N}$ with $r\ge 4$ and $\mu, \gamma$ with $0<\gamma\leq \frac{\mu}{2}$, there exist $\alpha, \xi>0$ such that the following holds for sufficiently large $n$.
Let $G$ be an $n$-vertex graph with $\delta(G)\geq \left(\frac{1}{2}+\mu\right)n$ and $\alpha_{r-2}(G)\leq \alpha n$.
Then $G$ contains a $\xi$-absorbing set $A$ of size at most $\gamma n$.
\end{lemma}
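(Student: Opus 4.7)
My plan is to apply the lattice-based absorption method of~\cite{MR3632565}, which reduces the proof to three tasks: (a)~partitioning $V(G) = B \cup U$ with $|B| = o(n)$ such that every $r$-set $S \subseteq U$ admits $\Omega(n)$ pairwise vertex-disjoint $(K_r, t)$-absorbers for some fixed constant $t = t(r, \mu)$; (b)~extracting from $G[U]$ a $\xi'$-absorbing set $A_0 \subseteq U$ of size at most $\gamma n / 2$ via the random-selection lemma of Nenadov--Pehova~\cite{MR4080942}; and (c)~greedily covering $B$ by a $K_r$-tiling $\mathcal{T}$ of size $o(n)$ vertex-disjoint from $A_0$, using that every vertex lies in many $K_r$-copies. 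Setting $A := A_0 \cup V(\mathcal{T})$ will then be the desired $\xi$-absorbing set, since after absorbing through $A$ the remaining vertices all lie in $U \setminus A_0$ and can be handled by $A_0$.

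The central step is producing the partition via reachability. Following the standard framework I declare two vertices $u, v$ to be $(\beta, i)$-reachable if there exist $\Omega(n^{ri-1})$ ordered $(ri-1)$-tuples $S$ such that $G[\{u\}\cup S]$ and $G[\{v\}\cup S]$ each admit a $K_r$-factor. Iterating this relation partitions $V(G)$ into closed reachable classes; I take $B$ to be the union of all classes of sublinear size and $U$ to be the (eventually, single) large class. The key claim is that every pair of vertices is $(\beta, i)$-reachable for some bounded $i$ under our hypotheses, so that $U$ is a single closed class of size $(1 - o(1))n$. Granting this, the absorbers $A_S$ for each $r$-set $S \subseteq U$ are built by iteratively swapping the vertices of a fixed $K_r$-copy in $U$ with those of $S$ via reachability witnesses, producing a bounded-size absorber; a routine count then yields $\Omega(n^{rt})$ such absorbers for each $S$ and $\Omega(n)$ pairwise vertex-disjoint ones.

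The main obstacle is proving the reachability claim, and this is precisely where the hypothesis $r \ge 4$ interacts with the parameter $r - 2$ in $\alpha_{r-2}(G)$. For a pair $u, v$, the common neighbourhood $N := N(u) \cap N(v)$ has size at least $2\mu n$, and the sublinearity of $\alpha_{r-2}(G[N])$ supplies a positive density of $K_{r-2}$-copies in $N$. To upgrade these to $K_{r-1}$-copies (which, together with the choice of $u$ or $v$, would yield a single-step $i = 1$ reachability) one combines the minimum-degree condition with an iterative supersaturation argument inside the common neighbourhoods of the $K_{r-2}$'s. When this direct approach is not strong enough, I would fall back to higher $i$, chaining several vertex-swaps along reachability witnesses in the spirit of~\cite{CHKWY, CHWY202207}; this is where the regularity-based Lemma~\ref{lem3.1}, applied to suitable neighbourhood subgraphs, enters as a supersaturation tool.

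Once $A_0$ is secured, step (c) is routine: each $v \in B$ is contained in $\Omega(n^{r-1})$ copies of $K_r$ by the same degree-plus-independence-number supersaturation, so a greedy procedure yields $\mathcal{T}$ of size $|B| = o(n)$ vertex-disjoint from $A_0$, completing the construction of $A$.
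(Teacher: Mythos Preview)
Your three-step scaffold (a)--(c) is precisely the paper's architecture: step (b) is Lemma~\ref{lem5.1} (Nenadov--Pehova), step (c) is Lemma~\ref{lem5.7}, and step (a) is the combination of Lemma~\ref{lem7.1} with Lemma~\ref{lem5.2}. So the outer structure is correct.

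The genuine gap lies in step (a), and specifically in establishing that $U$ is a \emph{single} closed class. Two issues:

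First, your direct common-neighbourhood argument for $1$-reachability does not go through as stated. You have $|N(u)\cap N(v)|\ge 2\mu n$, which indeed forces a $K_{r-2}$ inside $N(u)\cap N(v)$; but upgrading this to a $K_{r-1}$ inside $N(u)\cap N(v)$ would require some vertex $w\in N(u)\cap N(v)$ to have $\Omega(n)$ neighbours \emph{within} $N(u)\cap N(v)$, and nothing in the hypotheses prevents $|N(w)\cap N(u)\cap N(v)|$ from being tiny. The paper sidesteps this by applying the regularity lemma: every cluster $V_i$ lies in a triangle of the reduced graph, and for two typical vertices $u,v$ in the same cluster one finds a $K_{r-1}$ with one vertex in one neighbouring cluster and a $K_{r-2}$ in the other, all inside $N(u)\cap N(v)$. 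The small exceptional set of atypical vertices is exactly $B$.

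Second, and more seriously, even granting that every vertex of $U$ is $1$-reachable to $\Omega(n)$ others, this only gives a partition of $U$ into $O(1)$ closed classes (via Lemma~\ref{lem5.8}). Reducing to a single class is the technical heart of the proof and requires a \emph{transferral} argument (the paper's Lemma~\ref{lem5.10}): for any partition with at least two parts one must produce two robust copies of $K_r$ whose index vectors differ by $\mathbf{u}_i-\mathbf{u}_j$. The paper does this by working in the reduced multigraph $R$ with $\delta(R)\ge(1+\mu)k$, and successively ruling out crossing double-edges and crossing $K_3^{=}$'s (each such configuration would already yield the desired pair of $K_r$'s), which eventually contradicts the minimum degree of $R$. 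Your sketch contains no mechanism for this merging step; the phrase ``fall back to higher $i$, chaining several vertex-swaps'' does not by itself bridge two \emph{a priori} disjoint closed classes.

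Finally, invoking Lemma~\ref{lem3.1} as a ``supersaturation tool'' in neighbourhood subgraphs is off-target: that lemma yields almost-perfect tilings and plays no role in the absorption side of the argument.
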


Now we give the proof of Theorem \ref{thm7.2} using Lemma \ref{lem3.1} and Lemma \ref{lem3.3}.

\begin{proof} [Proof of Theorem \ref{thm7.2}] Given $r\in \mathbb{N}$ with $r\ge 4$ and $\mu>0$, we choose
\[\frac{1}{n}\ll \alpha\ll \delta\ll \xi\ll\gamma\ll \mu, \frac{1}{r}.\]
Let $G$ be an $n$-vertex graph with $\delta(G)\geq \left(\frac{1}{2}+\mu\right)n$ and $\alpha_{r-2}(G)\leq \alpha n$. By Lemma \ref{lem3.3} with $\gamma\leq \frac{\mu}{2}$, we find a $\xi$-absorbing set $A\subseteq V(G)$ of size at most $\gamma n$ for some $\xi>0$. Let $G_{1}:= G-A$. Then we have
\begin{center}
$\delta(G_{1})\geq \left(\frac{1}{2}+\mu\right)n-\gamma n\geq \left(\frac{1}{2}+\frac{\mu}{2}\right)n.$
\end{center}
Therefore by applying Lemma \ref{lem3.1} on $G_{1}$ with $\delta>0$, we obtain a $K_{r}$-tiling $\mathcal{T}$ that covers all but a set $L$ of at most $\delta n$ vertices in $G_{1}$. Since $\delta\ll \xi$, the absorbing property of $A$ implies that $G[A\cup L]$ contains a $K_{r}$-factor, which together with $\mathcal{T}$ forms a $K_{r}$-factor in $G$.
\end{proof}

\subsection{Regularity}
To find an almost perfect $K_{r}$-tiling (Lemma \ref{lem3.1}), an important ingredient in our proof is Szemer\'{e}di's Regularity Lemma. In this paper, we make use of a degree form of the regularity lemma \cite{MR1395865}.
Given a graph $G$ and a pair $(V_{1}, V_{2})$ of vertex-disjoint subsets in $V(G)$, the \emph{density} of $(V_{1}, V_{2})$ is defined as

\begin{center}
$d(V_{1}, V_{2})=\frac{e(V_{1}, V_{2})}{|V_{1}||V_{2}|}$.\\
\end{center}

\begin{defn}[]\label{def2.1}
Given $\varepsilon>0$, a graph $G$ and a pair $(V_{1}, V_{2})$ of vertex-disjoint subsets in $V(G)$, we say that the pair $(V_1, V_2)$ is $\varepsilon $-\emph{regular} if for all $X\subseteq V_{1}$ and $Y\subseteq V_{2}$ satisfying\\

\begin{center}
$X \subseteq V_{1}, |X| \ge \varepsilon |V_{1}| $ and $Y \subseteq V_{2}, |Y| \ge \varepsilon |V_{2}|$,
\end{center}
we have
\[
|d(X,Y) - d(V_1,V_2)|  \le  \varepsilon.
\]
\end{defn}

\begin{lemma}[\cite{MR1395865}, Slicing Lemma]\label{lem2.2}
Assume $(V_{1}, V_{2})$ is $\varepsilon$-regular with density $d$. For some $\alpha\geq \varepsilon$, let $V_{1}'\subseteq V_{1}$ with $|V_{1}'|\geq \alpha|V_{1}|$ and $V_{2}'\subseteq V_{2}$ with $|V_{2}'|\geq \alpha|V_{2}|$. Then $(V_{1}', V_{2}')$ is $\varepsilon'$-regular with $\varepsilon':=\max\{2\varepsilon, \varepsilon/\alpha\}$ and for its density $d'$ we have $|d'-d|<\varepsilon$.
\end{lemma}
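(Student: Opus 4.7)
The plan is a direct unwinding of the definition of $\varepsilon$-regularity together with the triangle inequality; no ingredients beyond Definition \ref{def2.1} are needed. First I would handle the density claim: since $\alpha\geq\varepsilon$, the subsets $V_1'\subseteq V_1$ and $V_2'\subseteq V_2$ satisfy $|V_1'|\geq\alpha|V_1|\geq\varepsilon|V_1|$ and $|V_2'|\geq\alpha|V_2|\geq\varepsilon|V_2|$, so they are eligible witnesses in the $\varepsilon$-regularity condition for $(V_1,V_2)$. Applying that condition with $X=V_1'$ and $Y=V_2'$ gives $|d(V_1',V_2')-d|\leq\varepsilon$, which is the desired density bound on $d'$.

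Next I would verify $\varepsilon'$-regularity of $(V_1',V_2')$. Take arbitrary $X\subseteq V_1'$ with $|X|\geq\varepsilon'|V_1'|$ and $Y\subseteq V_2'$ with $|Y|\geq\varepsilon'|V_2'|$. By the choice $\varepsilon'\geq\varepsilon/\alpha$ and the lower bounds on $|V_i'|$, I would compute
\[
|X|\geq\varepsilon'|V_1'|\geq\frac{\varepsilon}{\alpha}\cdot\alpha|V_1|=\varepsilon|V_1|,
\]
and analogously $|Y|\geq\varepsilon|V_2|$. Hence $X,Y$ are again eligible in the $\varepsilon$-regularity condition for the original pair, giving $|d(X,Y)-d|\leq\varepsilon$. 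Combining this with the density bound $|d(V_1',V_2')-d|\leq\varepsilon$ via the triangle inequality produces
\[
|d(X,Y)-d(V_1',V_2')|\leq 2\varepsilon\leq\varepsilon',
\]
which is exactly the $\varepsilon'$-regularity condition.

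There is essentially no obstacle here: the proof is a one-shot calculation, and the only subtlety is in parsing why the constant $\varepsilon'=\max\{2\varepsilon,\varepsilon/\alpha\}$ is exactly right. The term $\varepsilon/\alpha$ is forced by the scaling needed to recover sets of size $\geq\varepsilon|V_i|$ from sets that are only large relative to $V_i'$, while the term $2\varepsilon$ is forced by the two applications of the triangle inequality when comparing $d(X,Y)$ to $d(V_1',V_2')$ through $d$. Both slacks are absorbed by taking the maximum.
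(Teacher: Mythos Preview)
Your argument is correct and is exactly the standard proof of the Slicing Lemma. Note, however, that the paper does not actually prove Lemma~\ref{lem2.2}; it is quoted from \cite{MR1395865} without proof, so there is no in-paper argument to compare against. Your write-up would serve perfectly well as the omitted proof.
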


\begin{lemma}[\cite{MR1395865}, Degree form of the Regularity Lemma]\label{lem2.3}

For every $\varepsilon  > 0$ there is an $N = N(\varepsilon )$ such that the following holds for any real number $\beta\in [0, 1]$ and $n\in \mathbb{N}$. Let $G$ be a graph with $n$ vertices. Then there exists an $(\varepsilon,\beta)$-regular partition $V(G)=V_{0}\cup \cdots \cup V_{k} $ and a spanning subgraph $G' \subseteq G$ with the following properties:\\

         \item $({\rm 1})$ $ \frac{1}{\varepsilon}\leq k \le N $;\\

         \item $({\rm 2})$ $|V_{i}| \le \varepsilon  n$ for $i\in [0, k]$ and $|V_{1}|=|V_{2}|=\cdots=|V_{k}| =m$ for some $m\in \mathbb{N}$;\\

         \item $({\rm 3})$ $d_{G'}(v) > d_{G}(v) - (\beta + \varepsilon )n$ for all $v \in V(G)$;\\

         \item $({\rm 4})$ each $V_{i}$ is an independent set in $G' $ for $i\in [k]$;\\

         \item $({\rm 5})$ all pairs $(V_{i}, V_{j})$ are $\varepsilon $-regular with density 0 or at least $\beta$ for distinct $i, j\neq0$.

\end{lemma}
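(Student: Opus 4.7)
My plan is to derive this degree form from Szemer\'{e}di's original Regularity Lemma by a standard cleanup procedure. First, I would apply the original lemma to $G$ with a small parameter $\eps_0 \ll \eps$ and lower bound $1/\eps$ on the number of classes, obtaining a partition $V(G) = V_0^\ast \cup V_1 \cup \cdots \cup V_k$ with $|V_0^\ast| \le \eps_0 n$, common class size $m$, $1/\eps \le k \le N(\eps_0)$, and at most $\eps_0 k^2$ pairs $(V_i, V_j)$ that fail to be $\eps_0$-regular.

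Next I would perform a cluster cleanup: call $V_i$ \emph{bad} if it lies in more than $\sqrt{\eps_0}\, k$ non-regular pairs. A double-counting argument gives at most $2\sqrt{\eps_0}\, k$ bad clusters, so moving their vertices into the exceptional class to form the new $V_0$ ensures $|V_0| \le (\eps_0 + 2\sqrt{\eps_0})\, n \le \eps n$, and every surviving $V_i$ is in at most $\sqrt{\eps_0}\, k$ non-regular pairs. I would then construct $G'$ from $G$ by deleting every edge with both endpoints in the same $V_i$ (for $i \in [k]$) together with every edge lying in a pair $(V_i, V_j)$ ($i, j \in [k]$) that is either not $\eps_0$-regular or has density less than $\beta$.

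Properties (1), (2), (4), (5) are immediate from the construction (with (5) using $\eps_0 \le \eps$, and noting that a pair with all its edges deleted becomes $\eps$-regular of density $0$). For (3), fix $v \in V(G)$. If $v \in V_0$ then $v$ loses at most $|V_0| \le \eps n$ edges (those inside $V_0$). Otherwise $v \in V_i$ for some surviving cluster $V_i$, and $v$ loses at most $m \le \eps n / 2$ edges within $V_i$, at most $\sqrt{\eps_0}\, k \cdot m \le \sqrt{\eps_0}\, n$ edges into the non-regular pairs touching $V_i$ (using that $V_i$ is not bad), and at most $k \cdot \beta m \le \beta n$ edges into low-density pairs. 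Choosing $\eps_0$ sufficiently small in terms of $\eps$ makes the total loss less than $(\beta + \eps)\, n$, as required.

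The hard part here is the per-vertex bound in (3): the original Regularity Lemma only controls the number of non-regular pairs in aggregate, so without intervention a single vertex could lose linearly many edges if it sits in a cluster participating in many non-regular pairs. The cluster-cleanup step is precisely what converts the aggregate bound on non-regular pairs into a uniform per-vertex bound, and is the only non-routine ingredient of the argument.
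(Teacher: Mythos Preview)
The paper does not prove this lemma; it is quoted as a standard tool from \cite{MR1395865} and used as a black box. Your derivation from the original Regularity Lemma is the standard route and is mostly correct, but there is one genuine gap in your verification of property~(3).

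Your claim that a vertex $v\in V_i$ loses ``at most $k\cdot\beta m\le\beta n$ edges into low-density pairs'' is not justified. The condition $d(V_i,V_j)<\beta$ bounds only the \emph{average} degree from $V_i$ into $V_j$; a single vertex $v\in V_i$ may still have up to $m$ neighbours in $V_j$. So the per-vertex loss from low-density pairs is not controlled by your argument as written. (Contrast this with your treatment of irregular pairs, where you bound the \emph{number} of such pairs touching $V_i$ and then use the trivial bound $m$ per pair; that step is fine.)

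The usual repair adds a second, vertex-level cleanup. For each surviving $\eps_0$-regular pair $(V_i,V_j)$ with $d(V_i,V_j)<\beta$, regularity gives that all but at most $\eps_0 m$ vertices of $V_i$ have at most $(\beta+\eps_0)m$ neighbours in $V_j$. Summing over $j$ and applying Markov's inequality, all but at most $\sqrt{\eps_0}\,m$ vertices of $V_i$ are exceptional for at most $\sqrt{\eps_0}\,k$ pairs, and for every non-exceptional $v$ the low-density loss is at most $\sqrt{\eps_0}\,k\cdot m+(\beta+\eps_0)km\le(\beta+2\sqrt{\eps_0})n$. Move the at most $\sqrt{\eps_0}\,m$ exceptional vertices from each cluster into $V_0$, then trim every cluster to a common size (the Slicing Lemma preserves $\eps$-regularity). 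This keeps $|V_0|\le\eps n$ and yields the uniform bound $d_G(v)-d_{G'}(v)<(\beta+\eps)n$ once $\eps_0$ is chosen small enough in terms of $\eps$.
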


A widely-used auxiliary graph accompanied with the regular partition is the reduced graph.
To differentiate between dense and very dense pairs of partitions, we employ the following notion of reduced multigraph.

\begin{defn}[Reduced graph]\label{def2.4}
Let $k\in \mathbb{N}$, $\beta, \varepsilon>0$, $G$ be a graph with a vertex partition $V(G)=V_0\cup \ldots \cup V_k$ and $G'\subseteq G$ be a subgraph fulfilling the properties of Lemma \ref{lem2.3}. We denote by $R_{\beta, \varepsilon}$ the \emph{reduced graph} of this partition, which is defined as follows. Let $V(R_{\beta, \varepsilon})=\{V_{1}, \ldots, V_{k}\}$ and for two distinct clusters $V_{i}$ and $V_{j}$ we draw a double-edge between $V_{i}$ and $V_{j}$ if $d_{G'}(V_i, V_j)\geq \frac{1}{2}+\beta$, a single-edge if $\beta\leq d_{G'}(V_i, V_j)<\frac{1}{2}+\beta$ and no edge otherwise.
\end{defn}

The following fact presents a minimum degree of the reduced graph provided the minimum degree of $G$, where a double-edge is counted as two edges.
\begin{fac}[]\label{fact2.5}
Let $n\in \mathbb{N}$, $0<\varepsilon, \beta\leq \frac{\mu}{10}$ with $\beta\in [0, 1]$, and $G$ be an $n$-vertex graph with $\delta(G)\geq(\frac{1}{2}+\mu)n$. Let $V(G)=V_{0}\cup \cdots \cup V_{k}$ be a vertex partition of $V(G)$ satisfying Lemma \ref{lem2.3} (1)-(5). We denote the reduced graph as $R_{\beta, \varepsilon}$. Then for every $V_{i}\in V(R_{\beta, \varepsilon})$ we have

\begin{center}
$d_{R_{\beta, \varepsilon}}(V_{i})\geq(1+\mu)k$.
\end{center}

\end{fac}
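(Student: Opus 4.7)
The plan is a straightforward edge-count double argument: I compare a lower bound on the number of $G'$-edges leaving the cluster $V_i$ (forced by the minimum degree of $G$) with an upper bound coming from the pair-densities that define the edges of $R_{\beta,\varepsilon}$.

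First I would fix a cluster $V_i \in V(R_{\beta,\varepsilon})$. By Lemma \ref{lem2.3}(3), every $v \in V_i$ satisfies $d_{G'}(v) \geq (\tfrac{1}{2} + \mu - \beta - \varepsilon)n$. Summing over the $m$ vertices of $V_i$, which is independent in $G'$ by property (4), and then subtracting the at most $m|V_0| \leq \varepsilon m n$ edges going into the exceptional set (property (2)), I obtain
\[
\sum_{j \in [k] \setminus \{i\}} e_{G'}(V_i, V_j) \;\geq\; m n \bigl(\tfrac{1}{2} + \mu - \beta - 2\varepsilon\bigr) \;\geq\; k m^2 \bigl(\tfrac{1}{2} + \mu - \beta - 2\varepsilon\bigr),
\]
where the last inequality uses $km \leq n$.

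For the matching upper bound I would partition $[k] \setminus \{i\}$ into sets $D$, $S$, $N$ according to whether $(V_i, V_j)$ contributes a double-edge, single-edge, or non-edge in $R_{\beta,\varepsilon}$. Definition \ref{def2.4} together with property (5) of Lemma \ref{lem2.3} gives $e_{G'}(V_i, V_j) \leq m^2$ for $j \in D$, $e_{G'}(V_i, V_j) < (\tfrac{1}{2} + \beta) m^2$ for $j \in S$, and $e_{G'}(V_i, V_j) = 0$ for $j \in N$. Combining with the lower bound and dividing through by $m^2$ yields
\[
\bigl(\tfrac{1}{2} + \beta\bigr)|S| + |D| \;\geq\; k \bigl(\tfrac{1}{2} + \mu - \beta - 2\varepsilon\bigr).
\]

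Finally I would convert this into the required bound on $d_{R_{\beta,\varepsilon}}(V_i) = |S| + 2|D|$ via the elementary reweighting $(\tfrac{1}{2} + \beta)|S| + |D| \leq (\tfrac{1}{2} + \beta)(|S| + 2|D|)$, which holds since $2(\tfrac{1}{2} + \beta) \geq 1$. Dividing by $\tfrac{1}{2} + \beta$ gives $d_{R_{\beta,\varepsilon}}(V_i) \geq k(\tfrac{1}{2} + \mu - \beta - 2\varepsilon)/(\tfrac{1}{2} + \beta)$, and a short arithmetic check under $\beta, \varepsilon \leq \mu/10$ shows that this fraction exceeds $1 + \mu$. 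I do not expect any real obstacle: the only step requiring a moment's thought is the reweighting that passes from $(\tfrac{1}{2} + \beta)|S| + |D|$ to $|S| + 2|D|$; the rest is careful bookkeeping of the slack coming from $V_0$ and from property (3) of the regularity lemma, which is exactly what the hypothesis $\beta, \varepsilon \leq \mu/10$ accommodates.
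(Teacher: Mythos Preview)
Your argument is correct and is essentially the same as the paper's: both bound the $G'$-edges leaving $V_i$ from below via $\delta(G')$ and from above by noting that each unit of $R$-degree accounts for at most $(\tfrac{1}{2}+\beta)m^2$ edges, then divide and simplify under $\beta,\varepsilon\le\mu/10$. The paper phrases the upper bound in one line (``every edge in $R_{\beta,\varepsilon}$ represents less than $(\tfrac{1}{2}+\beta)m^2$ edges''), whereas you make the split into $D,S,N$ and the reweighting $(\tfrac{1}{2}+\beta)|S|+|D|\le(\tfrac{1}{2}+\beta)(|S|+2|D|)$ explicit, but the content is identical.
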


\begin{proof}[Proof] Note that $|V_{0}|\leq \varepsilon n$ and $|V_{i}|=m$ for each $i\in [k]$. Every edge in $R_{\beta, \varepsilon}$ represents less than $\left(\frac{1}{2}+\beta\right)m^{2}$ edges in $G'-V_{0}$. Thus we have
\begin{align*}
    d_{R_{\beta, \varepsilon}}(V_{i}) & \geq \frac{|V_{i}|(\delta(G)-(\beta+\varepsilon)n-\varepsilon n)}{\left(\frac{1}{2}+\beta\right)m^{2}} \\
    & \geq \frac{\left(\frac{1}{2}+\mu-2\varepsilon-\beta\right)mn}{\left(\frac{1}{2}+\beta\right)m^{2}} \\
    & \geq 2\left(\frac{1}{2}+\mu-2\varepsilon-\beta\right)(1-2\beta)k\\
    & > (1+\mu)k,
\end{align*}
since $0<\varepsilon, \beta\leq \frac{\mu}{10}$ and $\left(\frac{1}{2}+\beta\right)^{-1}\geq 2(1-2\beta)$.
\end{proof}

\begin{rmk}\label{remk2.6}
Let $R$ be a multigraph with multiplicity 2. We use $K_{2}^{=}$ to denote a copy of double-edge in $R$. Note that $|N(V_i)|\geq \frac{1}{2}d(V_i)$ for each $V_i\in V(R)$. The \emph{double-edge neighborhood} of $V_i\in V(R)$ is a set of vertices in $V(R)$ each of which is connected to $V_i$ through a double-edge. Similarly, we define the \emph{single-edge neighborhood}.
\end{rmk}

\section{Almost perfect tilings}\label{sec3}
To obtain an almost perfect $K_{r}$-tiling in the graph $G$, we first define two suitably-chosen auxiliary graphs $Q_{i}$ (to be defined later) for each $i\in [2]$  such that $Q_{i}$ contains a $K_{r}$-factor.
This roughly reduces the problem to finding in $G$ a collection of vertex-disjoint copies of auxiliary graphs which altogether cover almost all vertices.
Here our proof adopts a standard application the regularity lemma on $G$ to get a reduced graph $R$.
A key step in it is to construct certain structures in $R$ for embedding $Q_{i}$.
In this case, we find two structures, say $K^{=}_2$ and $K_{3}$, and an almost $\{K^{=}_2, K_{3}\}$-tiling in $R$.
Then we develop two tools (see Lemma \ref{lem2.17} and Lemma \ref{lem2.171}) for embedding auxiliary graphs $Q_{1}$ and $Q_{2}$ under certain pseudorandomness conditions.

\subsection{Technical tools}\label{sec2.2}
The main results in this subsection are Lemma \ref{lem4.4} which provides us an almost tiling with two structures in the reduced graph, and two embedding lemmas (Lemma \ref{lem2.17} and Lemma \ref{lem2.171}).
As aforementioned, $K_{2}^{=}$ and $K_{3}$ are two desired structures in the reduced graph.
The following result provides us an almost $\{K^{=}_{2}, K_{3}\}$-tiling in the reduced multigraph.

Here, we want to remark that Knierm and Su \cite{MR4193066} also established a fractional analogue of Lemma \ref{lem4.4}.

\begin{lemma}[]\label{lem4.4}
Given $0<\eta, \mu<1$, the following holds for sufficiently large $k\in \mathbb{N}$. Let $R$ be a $k$-vertex multigraph with multiplicity $2$ and $\delta(R)\geq (1+\mu)k$. Then there exists a $\{K^{=}_{2}, K_{3}\}$-tiling $\mathcal{T}$ in $R$ which covers all but at most $\eta k$ vertices.
\end{lemma}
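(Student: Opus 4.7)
The plan is to run a standard extremal argument on a maximum $\{K_{2}^{=}, K_{3}\}$-tiling. Let $\mathcal{T}$ be a $\{K_{2}^{=}, K_{3}\}$-tiling in $R$ that maximises the total number of covered vertices, set $U := V(R) \setminus \bigcup_{X \in \mathcal{T}} V(X)$, $u := |U|$, and suppose for contradiction that $u > \eta k$. Maximality immediately implies that the induced submultigraph $R[U]$ contains no double edge and no triangle in its underlying simple graph, so by Mantel's theorem $R[U]$ has at most $u^{2}/4$ edges.

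For $X \in \mathcal{T}$ write $W(X, U)$ for the total multiplicity of edges between $V(X)$ and $U$, and let $N_{D}(v)$ and $N_{G}(v)$ denote the double-edge and underlying-simple-graph neighbourhoods of $v$, respectively. Summing $d_{R}(v) \ge (1+\mu)k$ over $v \in U$ and subtracting twice the (internal, necessarily single) edge count of $R[U]$ gives
\[
\sum_{X \in \mathcal{T}} W(X, U) \;\ge\; u(1+\mu)k - \tfrac{u^{2}}{2}.
\]
The goal is to establish the per-tile upper bound $W(X, U) \le |V(X)|\, u + O(1)$ for every $X \in \mathcal{T}$; summing then gives $\sum_{X} W(X, U) \le u(k - u) + O(k)$, and the resulting inequality $\mu u k + u^{2}/2 \le O(k)$ contradicts $u > \eta k$ once $k$ is large.

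The bound for a $K_{2}^{=}$-tile $X = \{a,b\}$ is quick. If some $w \in N_{G}(a) \cap N_{G}(b) \cap U$ existed then $\{a,b,w\}$ would be a triangle, and swapping $X$ for this $K_{3}$ augments coverage by $1$; hence $N_{G}(a) \cap N_{G}(b) \cap U = \emptyset$ and so $|N_{G}(a) \cap U| + |N_{G}(b) \cap U| \le u$. Further, distinct $u_{1} \in N_{D}(a) \cap U$, $u_{2} \in N_{D}(b) \cap U$ would let us replace $X$ by the two disjoint $K_{2}^{=}$'s $\{a,u_{1}\}$ and $\{b,u_{2}\}$, augmenting coverage by $2$; combined with $N_{D}(a) \cap N_{D}(b) \cap U \subseteq N_{G}(a) \cap N_{G}(b) \cap U = \emptyset$, this forces one of $N_{D}(a) \cap U$, $N_{D}(b) \cap U$ to be empty, and the two observations together yield $W(X, U) \le 2u$.

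The main obstacle is the analogous bound for a $K_{3}$-tile $Y = \{a,b,c\}$. Writing $W(Y, U) = \sum_{x \in Y} |N_{G}(x) \cap U| + \sum_{x \in Y} |N_{D}(x) \cap U|$, the first sum is trivially at most $3u$ and the task is to control the second sum. A case analysis based on how many edges inside $Y$ are double (zero, one, two, or three), combined with a richer family of swaps replacing $Y$ by $\{K_{2}^{=}, K_{2}^{=}\}$, $\{K_{2}^{=}, K_{3}\}$, or $\{K_{3}, K_{3}\}$ configurations in $V(Y) \cup U$ (covering $4$, $5$, or $6$ vertices respectively), will yield a Hall-type dichotomy: either three disjoint representatives exist among $N_{D}(a) \cap U$, $N_{D}(b) \cap U$, $N_{D}(c) \cap U$ (forcing an augmentation), or Hall's condition fails and $\sum_{x \in Y} |N_{D}(x) \cap U| = O(1)$; in the residual case, one $N_{D}(x) \cap U$ may be large but then the corresponding common simple neighbourhood $N_{G}(y) \cap N_{G}(z) \cap U$ for $\{y,z\} = Y \setminus \{x\}$ must be empty (else a ``$K_{2}^{=}+K_{3}$'' swap augments), which cuts $|N_{G}(y) \cap U| + |N_{G}(z) \cap U| \le u$ and compensates the excess. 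Either way $W(Y, U) \le 3u + O(1)$, closing the counting argument.
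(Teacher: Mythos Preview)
Your approach is the same as the paper's: take a tiling of maximum coverage, bound the cross-edge multiplicity from each tile to the leftover set, and double-count. The paper obtains $e_R(\{u,v\},B)\le 2|B|$ for a $K_2^=$-tile $\{u,v\}$ using only the swap $K_2^=\to K_3$ (your second swap is correct but redundant here), and $e_R(\{u,v,w\},B)\le 3|B|+3$ for a $K_3$-tile, then concludes $|B|<1/\mu$.

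One wobble in your $K_3$ sketch: the Hall dichotomy for a \emph{triple} does not give the conclusion you state. If Hall fails because, say, $N_D(c)\cap U=\emptyset$, the other two double-neighbourhoods in $U$ may each still be large, so $\sum_{x\in Y}|N_D(x)\cap U|$ need not be $O(1)$. The correct move, whose pieces you already list, is to use the \emph{pairwise} swap first: whenever two vertices $x,y\in Y$ have distinct double-neighbours $u_x,u_y\in U$, replacing $Y$ by $\{x,u_x\},\{y,u_y\}$ gains one vertex. Hence for every pair in $Y$ no such distinct representatives exist, which forces either (i) all nonempty $N_D(x)\cap U$ to coincide in a single vertex (so the sum is at most $3$), or (ii) at most one $N_D(x)\cap U$ to be nonempty. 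Case (ii) is exactly your ``residual case'', and there your $K_2^=+K_3$ swap indeed yields $N_G(y)\cap N_G(z)\cap U=\emptyset$ once $|N_D(x)\cap U|\ge 2$, giving $W(Y,U)\le 3u+O(1)$ as desired. Finally, the case split on how many internal edges of $Y$ are double is a red herring; the paper never touches it and the argument above does not need it.
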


To apply Lemma \ref{lem4.4}, we need to build two intermediate auxiliary graphs $Q_{1}$ and $Q_{2}$ which work for $K^{=}_{2}$ and $K_{3}$ respectively such that each $Q_{i}$ contains a $K_{r}$-factor.

\begin{defn}[]\label{def2.15}
We construct an auxiliary graph $Q_{1}$ with $V(Q_{1})=U_{1}\cup U_{2}$ which satisfies the following conditions:
\begin{itemize}
  \item $Q_{1}$ contains two vertex-disjoint copies of $K_{r}$, say $H_{1}$ and $H_{2}$;
  \item $|V(H_{i})\cap U_{i}|=2$ and $|V(H_{i})\cap U_{i+1}|=r-2$ for $i\in [2]$ (indices $\{1, 2\}$ taken modulo 2).
\end{itemize}
\end{defn}

\begin{defn}[]\label{def2.16}
We construct an auxiliary graph $Q_{2}$ with $V(Q_{2})=U_{1}\cup U_{2}\cup U_{3}$ which satisfies the following conditions:
\begin{itemize}
  \item $Q_{2}$ contains three vertex-disjoint copies of $K_{r}$, say $H_{1}$, $H_{2}$ and $H_{3}$;
  \item $|V(H_{i})\cap U_{i}|=1$, $|V(H_{i})\cap U_{i+1}|=1$ and $|V(H_{i})\cap U_{i+2}|=r-2$ for $i\in [3]$ (indices $\{1, 2\}$ taken modulo 3).
\end{itemize}
\end{defn}

The following two lemmas are two standard gadgets.

\begin{lemma}[Embedding $Q_{1}$]\label{lem2.17}
For $r\in \N$ with $r\ge 4$ and $\beta>0$, there exist $\varepsilon, \alpha>0$ such that the following holds for sufficiently large $m$.
Let $G$ be a graph with $V(G)=V_{1}\cup V_{2}$, $\alpha_{r-2}(G)\leq \alpha |V(G)|$ and $|V_{i}|\geq m$ for each $i\in [2]$ such that $(V_{1}, V_{2})$ is $\varepsilon$-regular and $d(V_{1}, V_{2})\geq \frac{1}{2}+\beta$. Then there exists a copy of $Q_{1}$ in $G$ with $V(Q_{1})=U_{1}\cup U_{2}$ and $U_{i}\subseteq V_{i}$ for each $i\in [2]$.
\end{lemma}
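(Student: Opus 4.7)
The plan is to construct the two required $K_r$'s sequentially: first $H_2$ with two vertices in $V_2$ and $r-2$ in $V_1$, and then $H_1$ with two vertices in $V_1$ and $r-2$ in $V_2$, chosen inside the leftover vertices. Fix constants in the hierarchy $\frac{1}{m}\ll\alpha\ll\varepsilon\ll\beta,\frac{1}{r}$.

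To build $H_2$, use $\varepsilon$-regularity of $(V_1,V_2)$ to define the set $T_2\subseteq V_2$ of \emph{typical} vertices $v$ (those with $|N(v)\cap V_1|\geq(1/2+\beta-\varepsilon)|V_1|$); then $|T_2|\geq(1-\varepsilon)|V_2|\geq(1-\varepsilon)m$. Since $|T_2|$ comfortably exceeds $\alpha|V(G)|\geq\alpha_{r-2}(G)$, the set $T_2$ contains a $K_{r-2}$ and in particular an edge $v_1v_2$. Inclusion--exclusion then yields
\[
|N(v_1)\cap N(v_2)\cap V_1|\geq 2(1/2+\beta-\varepsilon)|V_1|-|V_1|\geq\beta m,
\]
and this common neighbourhood, again of size exceeding $\alpha|V(G)|$, contains a $K_{r-2}$ on some vertex set $S_1\subseteq V_1$. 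Since $S_1\subseteq N(v_1)\cap N(v_2)$, the set $\{v_1,v_2\}\cup S_1$ spans a $K_r$, which I take to be $H_2$.

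For $H_1$ I work inside $V_1':=V_1\setminus S_1$ and $V_2':=V_2\setminus\{v_1,v_2\}$. Only $r$ vertices have been removed and each $|V_i|\geq m$ is large, so the Slicing Lemma (Lemma \ref{lem2.2}) implies $(V_1',V_2')$ is $(2\varepsilon)$-regular with density at least $1/2+\beta/2$. The same argument, applied with the roles of $V_1$ and $V_2$ swapped, produces a $K_r$ (called $H_1$) with two vertices $u_1,u_2\in V_1'$ and an $(r-2)$-clique $S_2\subseteq V_2'$; it is automatically disjoint from $H_2$. Setting $U_1:=\{u_1,u_2\}\cup S_1$ and $U_2:=\{v_1,v_2\}\cup S_2$ gives the required copy of $Q_1$.

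The only real subtlety is ensuring $\alpha|V(G)|<\beta m$, so that the $K_{r-2}$-independence hypothesis is strong enough both to locate the initial edge inside the typical set and to extract an $(r-2)$-clique inside the common neighbourhood. This is guaranteed by choosing $\alpha$ small in terms of $\beta$, under the implicit ratio $|V(G)|/m=O(1)$ that arises in the regularity application. Everything else---the neighbourhood calculations and the regularity-preservation after removing $O(r)$ vertices---is standard bookkeeping.
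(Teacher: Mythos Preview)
Your proof is correct and follows essentially the same approach as the paper: find an edge among the typical vertices on one side, exploit the density $\ge \tfrac12+\beta$ to get a common neighbourhood of size $\ge \beta m$ on the other side, invoke $\alpha_{r-2}(G)\le\alpha|V(G)|$ to extract a $K_{r-2}$ there, then delete the resulting $K_r$ and repeat with the roles of $V_1,V_2$ swapped via the Slicing Lemma. Your explicit remark about needing $|V(G)|/m=O(1)$ to make $\alpha|V(G)|<\beta m$ work is a point the paper glosses over (it simply writes $(1-\varepsilon)|V_1|\ge \alpha|V(G)|$ citing $\alpha\ll\varepsilon$), so if anything your write-up is slightly more careful there.
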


\begin{lemma}[Embedding $Q_{2}$]\label{lem2.171}
For $r\in \N$ with $r\ge 4$ and $\beta>0$, there exist $\varepsilon, \alpha>0$ such that the following holds for sufficiently large $m$.
Let $G$ be a graph with $V(G)=V_{1}\cup V_{2}\cup V_{3}$, $\alpha_{r-2}(G)\leq \alpha |V(G)|$ and $|V_{i}|\geq m$ for each $i\in [3]$ such that $(V_{i}, V_{j})$ is $\varepsilon$-regular and $d(V_{i}, V_{j})\geq \beta$ for distinct $i, j\in [3]$.
Then there exists a copy of $Q_{2}$ in $G$ with $V(Q_{2})=U_{1}\cup U_{2}\cup U_{3}$ and $U_{i}\subseteq V_{i}$ for each $i\in [3]$.
\end{lemma}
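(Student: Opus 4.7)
The plan is to choose $\alpha\ll\eps\ll\beta,1/r$ and $m$ large, and then construct the three copies $H_1,H_2,H_3$ of $K_r$ that constitute $Q_2$ \emph{one at a time}. Each $H_j$ in Definition~\ref{def2.16} has the shape (one vertex in $U_j$, one vertex in $U_{j+1}$, a copy of $K_{r-2}$ in $U_{j+2}$), so in each round the task is to locate a single edge between two of the $V_i$'s whose endpoints share a large common neighbourhood in the third cluster, and then to harvest a $K_{r-2}$ inside that common neighbourhood via the hypothesis $\alpha_{r-2}(G)\le\alpha|V(G)|$.

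For $H_1$ I first use $\eps$-regularity of $(V_1,V_3)$ to see that all but at most $\eps|V_1|$ vertices $v_1\in V_1$ satisfy $|N(v_1)\cap V_3|\ge(\beta-\eps)|V_3|$, and then apply $\eps$-regularity of $(V_2,V_3)$ to the ``test set'' $S:=N(v_1)\cap V_3\subseteq V_3$ to conclude that all but at most $\eps|V_2|$ vertices $v_2\in V_2$ satisfy
\[
\bigl|N(v_1)\cap N(v_2)\cap V_3\bigr|=\bigl|N(v_2)\cap S\bigr|\ge(\beta-\eps)^2|V_3|.
\]
Since $(V_1,V_2)$ carries at least $(\beta-\eps)|V_1||V_2|$ edges, at least one such edge $v_1v_2$ avoids the $O(\eps)|V_1||V_2|$ bad pairs coming from the two preceding conditions; its common $V_3$-neighbourhood $W_3$ therefore has size at least $(\beta-\eps)^2m$. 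Choosing $\alpha$ small enough that $(\beta-\eps)^2m>\alpha|V(G)|$, the $K_{r-2}$-independence hypothesis supplies a $K_{r-2}$ inside $W_3$, which I name $T_3$; then $H_1:=\{v_1,v_2\}\cup T_3$ is a copy of $K_r$ with the desired $(1,1,r-2)$ distribution across $(V_1,V_2,V_3)$.

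For $H_2$ and $H_3$ I repeat this scheme inside the residual clusters obtained by removing the vertices already used, whose sizes shrink by only $O(r)$. The slicing lemma (Lemma~\ref{lem2.2}) keeps each pair $(2\eps)$-regular with density at least $\beta-2\eps$ for $m$ large, so the argument of the previous paragraph applies verbatim after cyclically rotating the roles of the three clusters to produce $H_2$ (with $K_{r-2}$ part $T_1$ inside $V_1$) and then $H_3$ (with $K_{r-2}$ part $T_2$ inside $V_2$). The three copies of $K_r$ are pairwise vertex-disjoint by construction, so taking $U_i:=V_i\cap V(H_1\cup H_2\cup H_3)$ yields the desired copy of $Q_2$. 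The main technical step I expect is the counting-via-regularity argument of the middle paragraph, which has to combine the $\eps$-regularities of two pairs sharing a common cluster and handle their intersection carefully; once that is in place, the rest is a routine greedy iteration together with the standard book-keeping required by the slicing lemma.
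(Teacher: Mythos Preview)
Your proposal is correct and follows essentially the same approach as the paper: both build each $H_j$ by first locating a vertex in one cluster with many neighbours in the other two, then a second vertex (adjacent to the first) with large common neighbourhood in the third cluster, and finally invoke $\alpha_{r-2}(G)\le\alpha|V(G)|$ to extract a $K_{r-2}$ there; the paper phrases the second step via the slicing lemma on $(N(v)\cap V_2,\,N(v)\cap V_3)$ whereas you count bad pairs among the edges of $(V_1,V_2)$, but this is the same regularity fact in slightly different clothing. The greedy iteration for $H_2,H_3$ with residual clusters and the slicing lemma is likewise what the paper does (it just writes ``by the similar arguments as above'').
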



Equipped with an almost $\{K^{=}_{2}, K_{3}\}$-tiling (Lemma \ref{lem4.4}) in the reduced multigraph and two embedding lemmas (Lemmas \ref{lem2.17} and \ref{lem2.171}), we are able to find an almost perfect $K_{r}$-tiling in the original graph $G$.

\begin{proof} [Proof of Lemma \ref{lem3.1}] Given $r\in \mathbb{N}$ with $r\ge 4$ and positive constants $\delta, \mu$, we shall choose
\begin{center}
$\frac{1}{n}\ll \alpha\ll \frac{1}{k}\ll \varepsilon\ll \beta\ll \eta\ll \delta, \mu, \frac{1}{r}$.
\end{center}
Let $G$ be an $n$-vertex graph with $\delta(G)\geq (\frac{1}{2}+\mu)n$ and $\alpha_{r-2}(G)\leq \alpha n$.
Applying Lemma \ref{lem2.3} with $\varepsilon, \beta>0$, we obtain an $\varepsilon$-regular partition $\mathcal{P}=\{V_{0}, V_{1}, \dots, V_{k}\}$ of $V(G)$.
Let $m:=|V_{i}|$ for each $i\in [k]$, $R:=R_{\beta, \varepsilon}$ be a reduced multigraph of the partition $\mathcal{P}$ with multiplicity $2$ and $V(R)=\{V_{1}, \dots, V_{k}\}$.
By Fact \ref{fact2.5}, we obtain that $\delta(R)\geq (1+\mu)k$.
By applying Lemma \ref{lem4.4} on $R$ with $\eta>0$, we obtain a $\{K^{=}_{2}, K_{3}\}$-tiling $\mathcal{T}$ in $R$, which covers all but at most $\eta k$ vertices.
Let $\mathcal{F}:=\{\mathcal{K}_{1},\dots, \mathcal{K}_{\ell}\}$ be the family of the vertex-disjoint copies of $K^{=}_{2}$ and $K_{3}$ in $\mathcal{T}$.

For every copy of $K^{=}_{2}$ in $\mathcal{T}$, say $\mathcal{K}_{1}$, we assume $V(\mathcal{K}_{1})=\{V_{1}, V_{2}\}$.
Now we construct a $Q_{1}$-tiling $\mathcal{Q}_{\mathcal{K}_{1}}$ by greedily picking vertex-disjoint copies of $Q_{1}$ in $G[V_{1}\cup V_{2}]$ such that $\mathcal{Q}_{\mathcal{K}_{1}}$ is maximal subject to that it contains at most $|V_{i}|$ vertices for each $i\in [2]$.
We claim that $\mathcal{Q}_{\mathcal{K}_{1}}$ covers at least $(1-\varepsilon)m$ vertices from $V_{i}$, $i\in [2]$.
Otherwise, we can apply Lemma \ref{lem2.17} and pick one more copy of $Q_{1}$ in $G[V'_{1}\cup V'_{2}]$ where $V'_{i}$ is an arbitrary vertex subset in $V_{i}$ with $|V'_{i}|\geq \varepsilon m$ for each $i\in [2]$.
This contradicts the maximality of $\mathcal{Q}_{\mathcal{K}_{1}}$.
For every copy of $K_{3}$ in $\mathcal{T}$, say $\mathcal{K}_{2}$, we can apply Lemma \ref{lem2.171}, by the same arguments, and obtain that there exists a $Q_{2}$-tilling $\mathcal{Q}_{\mathcal{K}_{2}}$, which covers all but at most $\varepsilon m$ vertices in each $V_{i}\in V(\mathcal{K}_{2})$.

As the $\{K^{=}_{2}, K_{3}\}$-tiling $\mathcal{T}$ in $R$ covers all but at most $\eta k$ vertices, and $|V_{0}|$ has at most $\varepsilon n$ vertices, we obtain a $(Q_{1}, Q_{2})$-tiling in $G$ which covers at least
\begin{center}
$n-|V_{0}|-\varepsilon mk-\eta km\geq (1-2\varepsilon-\eta)n\geq (1-\delta)n$
\end{center}
vertices, since $\varepsilon\ll \eta\ll \delta$.
As each copy of $Q_{i}$ contains a $K_{r}$-factor, the union of these vertex-disjoint $Q_{i}$ provides a $K_{r}$-tiling which covers all but at most $\delta n$ vertices in $G$ and this completes the proof.
\end{proof}
In the next subsection, we prove Lemma \ref{lem4.4}, Lemma \ref{lem2.17} and Lemma \ref{lem2.171}.

\subsection{Proof of related lemmas}\label{sec3.3}
We will end this section with the detailed proofs of Lemmas \ref{lem4.4}, \ref{lem2.17}, \ref{lem2.171}.

\begin{proof}[Proof of Lemma \ref{lem4.4}]
Given $\eta, \mu>0$, we choose $\frac{1}{k}\ll \eta, \mu$.
Let $\mathcal{T}$ be a $\{K^{=}_{2}, K_{3}\}$-tiling in $R$ such that $\mathcal{T}$ is maximal.
Let $A:=V(\mathcal{T})$ and $B:=V(R)\backslash A$.
We may assume $|B|>0$.
Otherwise, we are done.

Clearly, $R[B]$ is $ K_{3}$-free and $K^{=}_{2}$-free.
Hence, by Tur\'{a}n's theorem, it holds that $e_R(B)\leq \frac{1}{4}|B|^{2}$.
As $\delta(R)\geq (1+\mu)k$, it holds that $e_R(A, B)\geq (1+\mu)k|B|-\frac{1}{2}|B|^{2}$.
Without loss of generality, we assume that $\mathcal{T}$ consists of $p$ vertex-disjoint copies of $K^{=}_{2}$ and $q$ vertex-disjoint copies of $K_{3}$ for some $p, q\in \mathbb{N}$.
Obviously, it holds that $p\leq \frac{|A|}{2}$, $q\leq \frac{|A|}{3}$ and $2p+3q=|A|$.

For any copy of $K^{=}_{2}$ in $\mathcal{T}$, say $uv$, we claim that $N(u)\cap N(v)\cap B=\emptyset$.
Assume $w\in N(u)\cap N(v)\cap B$, we can find a new $\{K^{=}_{2}, K_{3}\}$-tiling $\mathcal{T}'$ by replacing $uv$ with $uvw$.
Clearly, $|V(\mathcal{T}')|=|V(\mathcal{T})|+1$ contradicts the maximality of $|V(\mathcal{T})|$.
Hence, it holds that $e_R(\{u, v\}, B)\leq 2|B|$.

For every copy of $K_{3}$ in $\mathcal{T}$, say $uvw$, we claim that $e_R(\{u, v, w\}, B)\leq 3|B|+3$.
Assume $e_R(\{u, v, w\}, B)\geq 3|B|+4$,
there exists a vertex, say $u$, with $e_R(\{u\}, B)\geq \left\lceil\frac{3|B|+4}{3}\right\rceil=|B|+2$.
This implies that there exist two vertices in $B$, say $u'$ and $u''$, such that $uu'$ and $uu''$ are two double-edges.
We can observe that none element (if exists) in $\{u'v, u'w, u''v, u''w\}$ can be a double-edge, and none element (if exists) in $\{u'vw, u''vw\}$ can be a triangle.
Hence $e_R(\{u, v, w\}, \{u', u''\})\leq 4+2=6$.
For any vertex $z\in B\backslash \{x, y\}$, it holds that $zvw$ can not be a triangle, and none element (if exists) in $\{zv, zw\}$ can be a double-edge.
It holds that $e_R(\{u, v, w\}, \{z\})\leq 3$ and $e_R(\{u, v, w\}, B\backslash\{u', u''\})\leq 3(|B|-2)$.
Now, we have
$e_R(\{u, v, w\}, B)=e_R(\{u, v, w\}, \{u', u''\})+e_R(\{u, v, w\}, B\backslash\{u', u''\})\leq 6+3(|B|-2)=3|B|<3|B|+4$, a contradiction.

Since $2p+3q=|A|$, it holds that
$e_R(A, B)\leq 2|B|p+(3|B|+3)q\leq 3q+|A||B|$.
Recall that $e_R(A, B)\geq (1+\mu)k|B|-\frac{1}{2}|B|^{2}$.
Let $f:=(1+\mu)k|B|-\frac{1}{2}|B|^{2}-(3q+|A||B|)$, thus $f\leq 0$.
Meanwhile we have
\begin{align*}
       f= & (1+\mu)k|B|-\frac{1}{2}|B|^{2}-(k-|B|)|B|-3q\\
        \geq & \mu k|B|+\frac{1}{2}|B|^{2}-|A|\\
        = & \frac{1}{2}|B|^{2}+k\left(\mu |B|-1\right)+|B|.
\end{align*}
As $|B|>0$, it holds that $|B|<\frac{1}{\mu}$, and $|B|\leq \eta k$, since $\frac{1}{k}\ll\eta, \mu$.
\end{proof}

\begin{proof}[Proof of Lemma \ref{lem2.17}] Given $ r\in \mathbb{N}$ and $\beta>0$, we choose
\begin{center}
$\frac{1}{m}\ll \alpha\ll \varepsilon\ll \beta$.
\end{center}
Since $(V_{1}, V_{2})$ is $\varepsilon$-regular and $d(V_1, V_2)\geq \frac{1}{2}+\beta$, there exists a subset $V'_{1}\subseteq V_{1}$ with $|V'_{1}|\geq (1-\varepsilon)|V_{1}|$ such that every vertex in $V'_{1}$ has at least $(d(V_{1}, V_{2})-\varepsilon)|V_{2}|\geq (\frac{1}{2}+\frac{\beta}{2})|V_{2}|$ neighbors in $V_{2}$.
Since $\alpha\ll \varepsilon$, it holds that $|V'_{1}|\geq (1-\varepsilon)|V_{1}|\geq \alpha |V(G)|\geq \alpha_{r-2}(G)$.
Hence, there exists a copy of $K_{r-2}$ in $G[V'_{1}]$, say $H_{1}$.
For every edge in $H_1$, say $uv$, $|N_{V_{2}}(u)\cap N_{V_{2}}(v)|\geq \beta|V_{2}|\geq \alpha |V(G)|\geq \alpha_{r-2}(G)$ since $\frac{1}{m}\ll\alpha\ll\varepsilon\ll\beta$.
Then there exists one copy of $K_{r-2}$, say $H_{2}$, in $G[N_{V_{2}}(u)\cap N_{V_{2}}(v)]$.
Till now, we obtain one copy of $K_{r}$, say $H_{3}$, with $|V(H_{3})\cap V_{1}|=2$ and $|V(H_{3})\cap V_{2}|=r-2$.
Let $V''_{1}:= V_{1}\backslash V(H_{3})$ and $V''_{2}:= V_{2}\backslash V(H_{3})$.
By Lemma \ref{lem2.2}, it holds that $(V''_{1}, V''_{2})$ is $\varepsilon'$-regular with $\varepsilon':=\max\left\{2\varepsilon, \frac{\varepsilon|V_{2}|}{|V''_{2}|}\right\}=2\varepsilon$ and $d(V''_{1}, V''_{2})\geq d(V_{1}, V_{2})-\varepsilon\geq \frac{1}{2}+\frac{\beta}{2}$.
In $G[V''_{1}\cup V''_{2}]$, we repeat the upper embedding process and obtain another copy of $K_{r}$, say $H_{4}$, with $|V(H_{4})\cap V''_{1}|=r-2$ and $|V(H_{4})\cap V''_{2}|=2$.
We conclude the proof.
\end{proof}

\begin{proof} [Proof of Lemma \ref{lem2.171}] Given $r\in \mathbb{N}$ and $\beta>0$, we choose
\begin{center}
$\frac{1}{m}\ll \alpha\ll \varepsilon\ll \beta$.
\end{center}
Since $(V_{i}, V_{j})$ is $\varepsilon$-regular for distinct $i, j\in [3]$, there exists a subset $V'_{1}\subseteq V_{1}$ with $|V'_{1}|\geq (1-2\varepsilon)|V_{1}|$ such that every vertex in $V'_{1}$ has at least $(\beta-\varepsilon)|V_{i}|\geq \frac{\beta}{2}|V_{i}|$ neighbors in $V_{i}$ for every $i\in \{2, 3\}$ since $\varepsilon\ll \beta$.
Take any $v\in V'_{1}$, let $V'_{2}:=N(v)\cap V_{2}$ and $V'_{3}:=N(v)\cap V_{3}$. Clearly, $|V'_{i}|\geq \frac{\beta}{2}|V_{i}|$ for $i\in \{2, 3\}$.
By Lemma \ref{lem2.2}, it holds that $(V'_{2}, V'_{3})$ is $\varepsilon'$-regular with $\varepsilon':=\max\left\{2\varepsilon, \frac{\varepsilon|V_{2}|}{|V'_{2}|}\right\}\leq\frac{2}{\beta}\varepsilon$.
Hence, there exists one subset $V''_{2}\subseteq V'_{2}$ such that every vertex in $V''_{2}$ has at least $(d(V_{2}, V_{3})-\varepsilon)|V'_{3}|\geq \frac{\beta}{2}|V'_{3}|$ neighbors in $V'_{3}$.
Take $u\in V''_{2}$, let $V''_{3}:=N(u)\cap V'_{3}$.
Clearly, $|V''_{3}|\geq (\frac{\beta}{2})^{2}|V_{3}|\geq \alpha|V(G)|\geq \alpha_{r-2}(G)$ since $\frac{1}{m}\ll\alpha\ll \varepsilon\ll \beta$.
Then there exists one copy of $K_{r-2}$ in $G[V''_{3}]$.
Till now, we obtain one copy of $K_{r}$, say $H_{1}$ which satisfies $|V(H_{1})\cap V_{1}|=1$, $|V(H_{1})\cap V_{2}|=1$ and $|V(H_{1})\cap V_{3}|=r-2$.
By the similar arguments as above, we can greedily embed the other two desired vertex-disjoint copies of $K_{r}$ in $G$.
We conclude the proof.
\end{proof}

\section{Absorbing}\label{sec5}

In this section, we will prove Lemma \ref{lem3.3}. We use a result of Nenadov and Pehova \cite{MR4080942} which gives a sufficient condition for the existence of an absorbing set.

\begin{lemma}[\cite{MR4080942}, Lemma 2.2]\label{lem5.1}
Let $\gamma>0$ and $r, t\in \mathbb{N}$ be constants.
Then there exists $\xi:=\xi(r, t, \gamma)$ such that the following holds for sufficiently large $n$.
Suppose that $G$ is a graph with $n$ vertices such that every $S\in \binom{V(G)}{r}$ has a family of at least $\gamma n$ vertex-disjoint $(K_r, t)$-absorbers. Then $G$ contains a $\xi$-absorbing set of size at most $\gamma n$.
\end{lemma}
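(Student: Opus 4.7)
The plan is to construct the absorbing set $A$ probabilistically, as a vertex-disjoint union of $(K_r, t)$-absorbers drawn from a randomly sampled vertex set, and then verify the absorbing property by a greedy matching argument. For each $r$-set $S \in \binom{V(G)}{r}$ fix, using the hypothesis, a family $\mathcal{F}_S^{\ast}$ of $\gamma n$ vertex-disjoint $(K_r, t)$-absorbers for $S$; these serve as building blocks throughout.

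Set $p := \gamma/(2rt)$ and include each vertex of $V(G)$ independently with probability $p$ to obtain a random set $V' \subseteq V(G)$. A Chernoff bound gives $|V'| \le \gamma n$ with probability $1 - e^{-\Omega(n)}$. For each $r$-set $S$, let $X_S$ count the members of $\mathcal{F}_S^{\ast}$ entirely contained in $V'$. Since the absorbers in $\mathcal{F}_S^{\ast}$ are pairwise vertex-disjoint, the corresponding containment events are mutually independent; hence $\mathbb{E}[X_S] = \gamma n \cdot p^{rt}$ and a further Chernoff bound gives $X_S \ge c_0 n$ with probability $1 - e^{-\Omega(n)}$, where $c_0 := \gamma p^{rt}/2$. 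A union bound over the at most $n^r$ choices of $S$ then produces a deterministic $V'$ with $|V'| \le \gamma n$ and $X_S \ge c_0 n$ for every $r$-set $S$.

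From within $V'$ I would extract a vertex-disjoint collection $\mathcal{A}$ of absorbers, each drawn from some $\mathcal{F}_S^{\ast}$, such that for every $r$-set $S \in \binom{V(G)}{r}$ at least $\xi n$ members of $\mathcal{A}$ are absorbers for $S$, where $\xi := \xi(r, t, \gamma) > 0$ is chosen sufficiently small. Set $A := \bigcup_{F \in \mathcal{A}} F$, trimming at most $r - 1$ vertices if needed so that $|A| \in r\N$. Then $|A| \le |V'| \le \gamma n$, and concatenating the $K_r$-factors of the disjoint absorbers composing $A$ furnishes a $K_r$-factor of $G[A]$. The $\xi$-absorbing property now follows from a greedy matching argument: for $U \subseteq V(G) \setminus A$ with $|U| \le \xi n$ and $|A \cup U| \in r\N$, one has $|U| \in r\N$ (since $|A|$ is), so we may partition $U = S_1 \cup \cdots \cup S_\ell$ with $\ell \le \xi n / r$; for each $S_i$ in turn select an unused $F^{(i)} \in \mathcal{A}$ absorbing $S_i$, which is always possible because $\mathcal{A}$ contains at least $\xi n$ absorbers for $S_i$ while fewer than $\ell \le \xi n$ have been used. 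Replacing the $K_r$-factor of each $F^{(i)}$ by the $K_r$-factor of $F^{(i)} \cup S_i$ guaranteed by the absorber definition, and leaving the remaining absorbers' factors untouched, gives a $K_r$-factor of $G[A \cup U]$.

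The main obstacle is the balanced extraction step: producing a vertex-disjoint $\mathcal{A}$ that simultaneously retains at least $\xi n$ absorbers for every $r$-set, despite absorbers drawn from $\mathcal{F}_S^{\ast}$ and $\mathcal{F}_{S'}^{\ast}$ for different $S, S'$ potentially sharing vertices, so that naive greedy selection can exhaust some $r$-sets. The standard resolution is a deletion-method second round inside $V'$: include each candidate absorber independently with a small probability $q$, bound the expected number of intersecting pairs via Markov, and delete one absorber from each colliding pair; the slack afforded by the bound $X_S \ge c_0 n$ from the sampling step ensures that the surviving family still contains the required $\xi n$ absorbers for every $r$-set.
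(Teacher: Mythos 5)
This lemma is not proved in the paper at all: it is quoted verbatim from Nenadov and Pehova \cite{MR4080942}, so there is no in-paper argument to compare with, and your proposal has to stand on its own. It does not: the step you yourself flag as the ``main obstacle'' -- extracting from $V'$ a vertex-disjoint family $\mathcal{A}$ in which \emph{every} $S\in\binom{V(G)}{r}$ retains at least $\xi n$ absorbers -- is not a routine deletion-method technicality but the point where the argument breaks irreparably. Quantitatively: the candidate absorbers are drawn from the families $\mathcal{F}_S^{\ast}$ over all $\Theta(n^r)$ sets $S$, i.e.\ up to order $n^{r+1}$ candidates (of which order $n^{r+1}$ survive inside $V'$). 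Keeping the final set within $\gamma n$ vertices forces the second-round probability to satisfy $q=O(n^{-r})$, and controlling the $\Theta(n^{2r+1})$ intersecting candidate pairs forces $q$ smaller still; but then the expected number of selected absorbers for any \emph{fixed} $S$ is $q\cdot O(n)=o(1)$, so after deletion almost every $r$-set is left with no absorber at all rather than with $\xi n$ of them. Structurally: a vertex-disjoint family inside a set of size at most $\gamma n$ has at most $\gamma n/(rt)$ members, so if every $r$-set had $\xi n$ absorbers in it, double counting would force a typical member to be an absorber for a constant fraction of all $\binom{n}{r}$ sets $S$ simultaneously. The hypothesis supplies only absorbers tailored to each individual $S$ (each selected member is known to absorb just the one $S$ whose family it came from), so the property your greedy verification needs cannot be certified -- and in the worst case consistent with the hypothesis it simply fails. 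This is exactly why the lemma is a genuine contribution of Nenadov and Pehova: with only linearly many vertex-disjoint absorbers per $r$-set, instead of the $\Omega(n^{rt})$ absorbing tuples per $r$-set available in classical R\"odl--Ruci\'nski--Szemer\'edi-type arguments, the one-round ``sample and delete collisions'' scheme and the simple one-absorber-per-$r$-set greedy absorption it supports are no longer available, and their proof relies on a different mechanism altogether.

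Two smaller remarks. First, your opening round is fine: for pairwise disjoint absorbers the containment events in an independent vertex sample are indeed independent, so Chernoff plus a union bound over the at most $n^r$ sets $S$ legitimately yields a $V'$ with $|V'|\le\gamma n$ and $X_S\ge c_0 n$ for all $S$; the failure is entirely in the second phase. Second, the ``trim at most $r-1$ vertices so that $|A|\in r\mathbb{N}$'' step is both unnecessary and harmful: a disjoint union of $(K_r,t)$-absorbers automatically has size $rt\cdot|\mathcal{A}|\in r\mathbb{N}$, whereas deleting stray vertices from an absorber would destroy the $K_r$-factor of $G[A]$ that the absorbing property (with $U=\emptyset$) requires.
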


So the key point in the proof of Lemma \ref{lem3.3} is to build linearly many vertex-disjoint absorbers for every $S\in \binom{V(G)}{r}$.
To achieve this, we employ the latticed-based absorbing method \cite{HMWY2021} and we first need the notion of $K_r$-reachability from \cite{HMWY2021} which originates in \cite{MR3338027}.

\begin{defn}
Let $G, K_r$ be given as aforementioned and $m, t\in \mathbb{N}$.
Then we say that two vertices $u, v\in V(G)$ are \emph{$(K_r, m, t)$-reachable} (in $G$) if for any vertex set $W$ of $m$ vertices, there is a set $S\subseteq V(G)\backslash W$ of size at most $rt-1$ such that both $G[S\cup \{u\}]$ and $G[S\cup \{v\}]$ have $K_r$-factors, where we call such $S$ an \emph{$K_r$-connector} for $u, v$.
Moreover, a set $U\subseteq V(G)$ is \emph{$(K_r, m, t)$-closed} if every two vertices $u, v\in U$ are $(K_r, m, t)$-reachable, where the corresponding $K_r$-connector for $u, v$ may not be contained in $U$.
If two vertices $u, v\in V(G)$ are $(K_r, m, 1)$-reachable, then we say $u$ is \emph{$1$-reachable} to $v$.
If $u, v\in U$ are $(K_r, m, t)$-reachable, and the corresponding $K_r$-connector for $u, v$ is contained in $U$, then we say that $u, v\in U$ are \emph{$(K_r, m, t)$-inner-reachable}.
Similarly, we can define \emph{$(K_r, m, t)$-inner-closed} and \emph{$1$-inner-reachable}.
\end{defn}

The following result from \cite{HMWY2021} builds a sufficient condition to ensure that every subset $S\subseteq V(G)$ with $|S|=r$ has linearly many vertex-disjoint absorbers.

\begin{lemma}[\cite{HMWY2021}, Lemma 3.9]\label{lem5.2}
Given $\beta>0$ and $t, r\in \mathbb{N}$ with $r\geq 3$, the following holds for sufficiently large $n\in \mathbb{N}$. Let $G$ be an $n$-vertex graph such that $V(G)$ is $(K_r, \beta n, t)$-closed. Then every $S\in \tbinom{V(G)}{r}$ has a family of at least $\frac{\beta}{r^{2}t}n$ vertex-disjoint $(K_r, t)$-absorbers.
\end{lemma}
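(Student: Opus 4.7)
The plan is to fix an arbitrary $r$-set $S=\{v_1,\dots,v_r\}$ and greedily build vertex-disjoint absorbers for $S$. Each absorber is produced by a two-step application of closure: first pick a $K_r$-tile $T=\{u_1,\dots,u_r\}$ disjoint from the already-used set, then attach connectors $C_i$ that each certify the $(K_r,\beta n,t)$-reachability of $v_i$ and $u_i$. The tile $T$ will supply the ``extra'' $K_r$ needed in a $K_r$-factor of $G[A_S\cup S]$ once the $v_i$'s are available, while the connectors are the bridges that can be factored using either $v_i$ or $u_i$.

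More precisely, maintain a set $W$ of already-used vertices containing $S$, with $|W|\le\beta n-r^2t$. To produce a new absorber, apply closure to any two vertices $u,v\in V(G)\setminus W$ and extract a single $K_r$-tile $T=\{u_1,\dots,u_r\}\subseteq V(G)\setminus W$ (any tile containing $u$ from the $K_r$-factor of $G[C\cup\{u\}]$ witnessing reachability works). Then iterate for $i=1,\dots,r$: invoke $(K_r,\beta n,t)$-reachability of $v_i$ and $u_i$ with excluded set $W\cup V(T)\cup C_1\cup\dots\cup C_{i-1}$, whose size is at most $|W|+r+(r-1)(rt-1)<\beta n$, to obtain a connector $C_i$ of size at most $rt-1$, disjoint from all earlier choices, with both $G[C_i\cup\{v_i\}]$ and $G[C_i\cup\{u_i\}]$ admitting $K_r$-factors. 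Set
\[
A_S:=V(T)\cup C_1\cup\dots\cup C_r,\qquad |A_S|\le r+r(rt-1)=r^2t.
\]
Then $G[A_S]$ has a $K_r$-factor given by the factors of $G[C_i\cup\{u_i\}]$ for $i\in[r]$, while $G[A_S\cup S]$ has a $K_r$-factor given by $T$ together with the factors of $G[C_i\cup\{v_i\}]$ for $i\in[r]$.

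Adding each completed absorber (together with $S$) back to $W$ consumes at most $r^2t$ vertices per round, so the procedure can be iterated at least $\lfloor\beta n/(r^2t)\rfloor\ge\beta n/(r^2t)$ times, yielding the required family of pairwise vertex-disjoint absorbers. The main delicacy is purely the bookkeeping: at every invocation of reachability the excluded set must stay below $\beta n$, which is precisely where the bound $rt-1$ on connector sizes is used repeatedly. A minor secondary point is matching the exact size $rt$ in the definition of a $(K_r,t)$-absorber with the bound $r^2t$ produced here, which can be handled either by absorbing the extra factor $r$ into a relabeling of the parameter $t$ in the conclusion, or by padding with a few additional vertex-disjoint $K_r$-tiles in $V(G)\setminus(W\cup A_S)$ (again guaranteed by closure applied to any two leftover vertices).
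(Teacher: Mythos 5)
The paper does not actually prove this statement -- it is imported as a black box from \cite{HMWY2021} -- so there is no in-paper proof to compare against; judged on its own, your reconstruction is the standard greedy connector argument and is essentially correct. The core of your construction is sound: the tile $T$ extracted via closure, the connectors $C_i$ for the pairs $v_i,u_i$ chosen with all previously used vertices excluded, and the two $K_r$-factors (factors of $G[C_i\cup\{u_i\}]$ for $G[A_S]$; $T$ plus factors of $G[C_i\cup\{v_i\}]$ for $G[A_S\cup S]$) are exactly right, and your bookkeeping bound $|W|+r+(r-1)(rt-1)<\beta n$ keeps every reachability invocation legal. Two small points. First, the size issue you flag is real but your second fix is stated backwards: each $|C_i|\equiv r-1\pmod r$, so $|A_S|$ is divisible by $r$ and can be as large as $r^2t$, hence you cannot pad ``to the exact size $rt$''; the correct resolution is to pad with disjoint $K_r$-tiles up to the common size $r^2t$, i.e.\ the construction really yields $(K_r,rt)$-absorbers in the paper's convention (consistent with the count $\tfrac{\beta}{r^2t}n$), and the discrepancy with the statement's ``$(K_r,t)$-absorbers'' is a parameter-labelling mismatch between the quoted lemma and the paper's absorber definition, not a flaw in your argument. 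Second, the inequality $\lfloor\beta n/(r^2t)\rfloor\ge\beta n/(r^2t)$ is the wrong way around; the clean accounting is that the process can continue whenever $|W|\le\beta n-r^2t$ and each completed absorber adds at most $r^2t$ vertices to $W$, which gives the required linear count for large $n$.
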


Based on this lemma, it suffices to show that $V(G)$ is closed. However, we are only able to prove a slightly weaker result which states that the graph $G$ admits a vertex partition $V(G)=B\cup U$ where $B$ is a small vertex set and $U$ is inner-closed.

\begin{lemma}[]\label{lem7.1}
Given $r\in \mathbb{N}$ with $r\ge 4$ and constants $\tau, \mu$ with $0<\tau<\mu$, there exist positive constants $\alpha, \beta$ and $t\in \mathbb{N}$ such that the following holds for sufficiently large $n$.
Let $G$ be an $n$-vertex graph with $\delta(G)\geq \left(\frac{1}{2}+\mu\right)n$ and $\alpha_{r-2}(G)\leq \alpha n$.
Then $G$ admits a partition $V(G)=B\cup U$ with $|B|\leq \tau n$ and $U$ is $(K_{r}, \beta n, t)$-inner-closed.
\end{lemma}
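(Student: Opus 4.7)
The plan is to follow the lattice-based absorbing framework of Han \cite{HMWY2021}. The desired partition arises by first establishing a \emph{reachability base} and then collapsing all good vertices into a single reachability class via the minimum-degree condition.

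\emph{Step 1 (Reachability base).} I will produce constants $\beta, \eta > 0$ and $t \in \mathbb{N}$, depending on $r, \mu, \tau$, such that all but at most $\tau n / 2$ vertices $v$ of $V(G)$ satisfy
\[
\abs{\{\, u \in V(G) : u, v \text{ are } (K_r, \beta n, t)\text{-reachable}\,\}} \ge \left(\tfrac12+\tfrac\mu2\right)n .
\]
The starting observation is that for every adjacent pair $u,v$, the common neighbourhood $N(u)\cap N(v)$ has size at least $2\mu n\gg\alpha n\ge\alpha_{r-2}(G)$, hence contains a copy $T$ of $K_{r-2}$, and so $\{u,v\}\cup T$ is a $K_r$. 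Building on this common $K_r$, I construct a constant-sized connector $S$ of size $rt-1$, disjoint from $\{u,v\}$, by augmenting $T$ with an auxiliary vertex-disjoint $K_r$-tile together with a short overlap gadget that absorbs the parity deficit so that both $S\cup\{u\}$ and $S\cup\{v\}$ admit a $K_r$-factor. Iterating the $K_{r-2}$-extraction inside a linear ambient set avoiding any $\beta n$-forbidden set provides linearly many vertex-disjoint such connectors.

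\emph{Step 2 (Single reachability class).} Let $B_0$ collect the vertices for which the base fails, so $|B_0|\le\tau n/2$. For any $u,v\in V(G)\setminus B_0$, the two reachable sets have total size exceeding $n$, and hence intersect; $u$ and $v$ therefore share a reachable partner. Closing reachability under transitivity by chain-concatenation (as in \cite{HMWY2021}, at the cost of a constant increase in $t$), $V(G)\setminus B_0$ becomes a single $(K_r,\beta'n,t')$-reachability class for adjusted constants $\beta',t'$.

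\emph{Step 3 (Partition and inner-closedness).} Set $B:=B_0$ and $U:=V(G)\setminus B_0$, so that $|B|\le\tau n$. To upgrade reachability to inner-reachability on $U$, I require the connector of Step 1 (and each link in the transitive chain of Step 2) to lie entirely in $U\setminus W$ for the given forbidden set $W$ of size $\beta' n$: since $|U|\ge(1-\tau)n$ is linear and $|W|\le\beta'n$, every $K_{r-2}$-extraction and $K_r$-augmentation step can be carried out inside $U\setminus W$, whose size exceeds $\alpha n$ by a wide margin, yielding the desired inner-closedness.

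\emph{Main obstacle.} The hardest part is Step 1: constructing the constant-sized connector $S$ so that both $S\cup\{u\}$ and $S\cup\{v\}$ admit $K_r$-factors while $|S|=rt-1$. The weakness of the hypothesis $\alpha_{r-2}(G)\le\alpha n$ (rather than $\alpha(G)\le\alpha n$) is crucial here: every clique-extraction step can only target $K_{r-2}$, and the extension from $K_{r-2}$ to the $K_{r-1}$ or $K_r$ needed by the gadget has to be carried out via the minimum-degree bound $\delta(G)\ge(\tfrac12+\mu)n$ in a carefully chained argument, with the depth of the chain dictating the constant $t$. This is also where the assumption $\tau<\mu$ enters, providing the slack needed for the bookkeeping losses along the chain.
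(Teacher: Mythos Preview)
Your proposal has a genuine gap in Step 1, and Step 2 inherits the problem.

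\textbf{The gap in Step 1.} You correctly observe that adjacent $u,v$ have $|N(u)\cap N(v)|\ge 2\mu n$, hence a $K_{r-2}$ there, giving a $K_r$ on $\{u,v\}\cup T$. But this does \emph{not} yield a connector: for $(K_r,\beta n,1)$-reachability you need a $K_{r-1}$ inside $N(u)\cap N(v)$, and $\alpha_{r-2}(G)\le\alpha n$ only guarantees a $K_{r-2}$ in a linear set. Your promised ``short overlap gadget that absorbs the parity deficit'' is not described, and there is no obvious way to build it: to extend the $K_{r-2}$ to a $K_{r-1}$ inside $N(u)\cap N(v)$ one would need a vertex $w\in N(u)\cap N(v)$ with $|N(w)\cap N(u)\cap N(v)|\ge\alpha n$, which the degree condition alone does not give when $\mu$ is small. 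Larger $t$ does not obviously help either, since any natural gadget seems to require a common neighbour of $\Theta(r)$ prescribed vertices. You flag this as the main obstacle but do not resolve it.

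\textbf{Why Step 2 also fails.} Your intersection argument in Step 2 needs each good vertex to be reachable to \emph{more than} $n/2$ others. This hinges entirely on the unproven Step~1 claim that $v$ is reachable to essentially all of $N(v)$.

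\textbf{What the paper does instead.} The paper's route is substantially different and explains why regularity is used. First (Lemma~\ref{lem5.5}) it applies the regularity lemma and covers every cluster by a triangle $V_1V_2V_3$ in the reduced graph. For $u,v$ in the same cluster $V_1$, regularity makes $N(u)\cap N(v)\cap V_2$ and $N(u)\cap N(v)\cap V_3$ both linear; picking $w$ in the former with linearly many neighbours in the latter and then a $K_{r-2}$ inside $N(w)\cap N(u)\cap N(v)\cap V_3$ produces a genuine $K_{r-1}$ in $N(u)\cap N(v)$, hence $1$-reachability. This only gives reachability to a \emph{small} linear fraction $\gamma_1 n$ (roughly one cluster), so the simple intersection argument of your Step~2 is unavailable. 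The paper therefore invokes the full lattice-based machinery (Lemmas~\ref{lem5.8}--\ref{lem5.10}): partition $U$ into $O(1/\gamma_1)$ closed parts, and then merge them pairwise by exhibiting transferrals $\mathbf u_i-\mathbf u_j\in L^{\beta'}(\mathcal P)$, again via regularity. Your outline mentions \cite{HMWY2021} but skips exactly this merging step, which is where most of the work lies.
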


Clearly, we shall focus on the subgraph $G[U]$ and obtain an absorbing set by applying Lemma \ref{lem5.2} and Lemma \ref{lem5.1} on $G[U]$.

The next step is to deal with the vertex set $B$.
We shall pick mutually vertex-disjoint copies of $K_{r}$ each covering a vertex in $B$.
To achieve this, we use the following result.


\begin{lemma}[]\label{lem5.7}
Given  $r\in \mathbb{N}$ with $r\ge 4$ and a constant $\mu>0$, there exists $\alpha>0$ such that the following holds for sufficiently large $n$.
Let $G$ be an $n$-vertex graph with $\delta(G)\geq \left(\frac{1}{2}+\mu\right)n$ and $\alpha_{r-2}(G)\leq \alpha n$.
If $W$ is a subset of $V(G)$ with $|W|\leq \frac{\mu}{2}n$, then there exists at least one copy of $K_{r}$ in $G-W$ covering $v$ for each $v\in V(G)\backslash W$.
\end{lemma}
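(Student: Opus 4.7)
The plan is a short direct argument exploiting the slack in the minimum degree that survives the deletion of $W$, together with a single application of the hypothesis $\alpha_{r-2}(G)\le \alpha n$ to one common neighborhood of linear size.

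To set up, I would choose $\alpha<\mu$ and write $G':=G-W$. Since $|W|\le \mu n/2$, every vertex of $G'$ has degree at least $(1/2+\mu/2)n$ in $G'$, and in particular $|V(G')|\ge (1-\mu/2)n$. Fix the given $v\in V(G)\setminus W$; because its $G'$-degree is linear, pick an arbitrary neighbor $u_1\in N_{G'}(v)$. A standard inclusion-exclusion estimate gives
\[
|N_{G'}(v)\cap N_{G'}(u_1)| \;\ge\; 2\bigl(\tfrac{1}{2}+\tfrac{\mu}{2}\bigr)n-\bigl(|V(G')|-2\bigr) \;\ge\; \mu n.
\]
Since this is larger than $\alpha n\ge \alpha_{r-2}(G)$, the set $N_{G'}(v)\cap N_{G'}(u_1)$ is not $K_{r-2}$-free and hence contains a clique on $r-2$ vertices $w_1,\ldots,w_{r-2}$. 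Each $w_j$ is adjacent to both $v$ and $u_1$ by construction, and the $w_j$'s are pairwise adjacent, so $\{v,u_1,w_1,\ldots,w_{r-2}\}$ spans the desired $K_r$ in $G-W$ containing $v$.

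There is essentially no substantial obstacle: the entire argument reduces to one inclusion-exclusion estimate followed by a single appeal to the $K_{r-2}$-independence hypothesis. The only care needed is to enforce $\alpha<\mu$ (say $\alpha:=\mu/2$) so that the linear lower bound $\mu n$ on the common neighborhood beats $\alpha_{r-2}(G)$. The assumption $r\ge 4$ is used only so that $r-2\ge 2$ and $\alpha_{r-2}(G)$ is defined as in the earlier notation.
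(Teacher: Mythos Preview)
Your proof is correct and essentially identical to the paper's own argument: both pick an arbitrary neighbor of $v$ in $G-W$, use inclusion--exclusion to show the common neighborhood has size $\Omega(\mu n)>\alpha n\ge\alpha_{r-2}(G)$, and extract a $K_{r-2}$ there. The only (harmless) slip is subtracting $|V(G')|-2$ rather than $|V(G')|$ in the inclusion--exclusion bound; the conclusion $\ge \mu n$ holds either way.
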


\begin{proof} [Proof of Lemma \ref{lem5.7}]
Given $r\in \mathbb{N}$ with $r\geq 4$ and $\mu>0$, we choose $\frac{1}{n}\ll\alpha \ll \mu$.
Let $v\in V(G)$ and $W\subset V(G)$ with $|W|\leq \frac{\mu}{2}n$.
In $N(v)\backslash W$, we arbitrarily choose a vertex, say $u$.
Then $|(N(v)\cap N(u))\backslash W|\geq (|N(v)\backslash W|+|N(u)\backslash W|)-|V(G)\backslash W|\geq 2\left(\frac{1}{2}+\mu\right)n-n-|W|\geq \frac{3}{2}\mu n\geq \alpha n\geq \alpha_{r-2}(G)$, as $\alpha\ll \mu$.
Hence, there exists a copy of $K_{r-2}$ in $G[(N(v)\cap N(u))\backslash W]$, which together with $\{v, u\}$ forms a copy of $K_{r}$ as desired.
\end{proof}

Now, we give the proof of Lemma \ref{lem3.3} by using Lemma \ref{lem5.1}, Lemma \ref{lem5.2}, Lemma \ref{lem7.1} and Lemma \ref{lem5.7}.
\begin{proof} [Proof of Lemma \ref{lem3.3}]
Let $r\in \mathbb{N}$ with $r\ge 4$ and constants $\gamma, \mu$ with $0<\gamma\leq \frac{\mu}{2}$.
Then we take $\tau=\frac{\gamma}{2r}$ and
\[
\frac{1}{n}\ll \alpha\ll \xi\ll \frac{1}{t}, \beta\ll \gamma, \mu.
\]
Let $G$ be an $n$-vertex graph with $\delta(G)\geq \left(\frac{1}{2}+\mu\right)n$ and $\alpha_{r-2}(G)\leq \alpha n$.
Applying Lemma \ref{lem7.1} on $G$ with $\tau\ll\mu$, we obtain that $G$ admits a vertex partition $V(G)=B\cup U$ such that $|B|\leq \tau n$ and $U$ is $(K_{r}, \beta n, t)$-inner-closed.
Applying Lemma \ref{lem5.2} on $G[U]$, it holds that every $S\in \tbinom{U}{r}$ has a family of at least $\frac{\beta}{r^{2}t}|U|\geq \frac{\beta}{2r^{2}t}n$ vertex-disjoint $(K_{r}, t)$-absorbers.
Applying Lemma \ref{lem5.1} on $G[U]$ where $\frac{\gamma}{2}$ plays the role of $\gamma$, we obtain a $\xi$-absorbing set $A_{1}$ in $G[U]$ of size at most $\frac{\gamma}{2}n$.

Now, we shall iteratively pick vertex-disjoint copies of $K_{r}$ each covering one vertex in $B$ while avoiding using any vertex in $A_{1}$, and we claim that every vertex in $B$ can be covered in this way.
Let $G_{1}:= G-A_{1}$.
For each $v\in B$, we apply Lemma \ref{lem5.7} iteratively to find a copy of $K_{r}$ covering $v$ in $G_{1}$, while avoiding $A_{1}$ and all copies of $K_{r}$ found so far. This is possible as during the process the number of vertices that we need to avoid is at most

\begin{center}
$r|B|+|A_{1}|\leq r\tau n+ \frac{\gamma}{2}n=\gamma n\leq \frac{\mu}{2}n$.
\end{center}

Let $W$ be the union of the vertex sets over all the $|B|$ vertex-disjoint copies of $K_{r}$ as above and $A:= A_{1}\cup W$. Recall that $A_{1}$ is a $\xi$-absorbing set for $G[U]$, and $G[W]$ has a $K_{r}$-factor. Thus $A$ is a $\xi$-absorbing set for $G$ with $|A|\leq \gamma n$.
\end{proof}

Now it remains to prove Lemma \ref{lem7.1} whose proof will be given in next subsection.

\subsection{Proof of Lemma \ref{lem7.1}}
To prove Lemma \ref{lem7.1}, we divide the proof into two steps (i): $G$ admits a partition $V(G)=B\cup U$ where $B$ is a small-sized set and every vertex in $U$ is $1$-inner reachable to linearly many vertices; (ii): we show that $U$ is inner-closed.
The following result makes the first step.

\begin{lemma}[]\label{lem5.5}
Given $r\in \mathbb{N}$ with $r\ge 4$ and constants $\tau, \mu$ with $0<\tau<\mu$, there exist positive constants $\alpha, \beta_{1}, \gamma_{1}$ such that the following holds for sufficiently large $n$. Let $G$ be an $n$-vertex graph with $\delta(G)\geq \left(\frac{1}{2}+\mu\right)n$ and $\alpha_{r-2}(G)\leq \alpha n$. Then $G$ admits a vertex partition $V(G)=B\cup U$ such that $|B|\leq \tau n$ and every vertex in $U$ is $(K_{r}, \beta_{1}n, 1)$-inner-reachable to at least $\gamma_{1}n$ other vertices in $G[U]$.
\end{lemma}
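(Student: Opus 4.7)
The plan is to apply the regularity lemma, group the resulting clusters into tiles using Lemma~\ref{lem4.4}, and establish $1$-inner-reachability within each tile via an embedding that uses a helper cluster. Concretely, I would apply Lemma~\ref{lem2.3} to $G$ with parameters $\alpha\ll 1/k\ll\varepsilon\ll\beta\ll\eta\ll\tau,\mu$, obtaining a partition $V(G)=V_0\cup V_1\cup\cdots\cup V_k$ (with $|V_i|=m$ for $i\in[k]$) and reduced multigraph $R$, which satisfies $\delta(R)\ge(1+\mu)k$ by Fact~\ref{fact2.5}. Lemma~\ref{lem4.4} then yields a $\{K^{=}_{2},K_3\}$-tiling $\mathcal{T}$ of $R$ covering all but at most $\eta k$ clusters. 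I would take $B$ to consist of $V_0$, all vertices of clusters not covered by $\mathcal{T}$, and, for each covered cluster $V_a$, a small exceptional subset $B_a\subseteq V_a$ of vertices whose $G$-neighborhoods in ``too many'' other clusters of $R$ deviate from typical density by more than $\varepsilon m$; a standard double-counting with $\varepsilon$-regularity gives $|B_a|=O(\varepsilon^{1/2})m$, so $|B|\le\tau n$. Set $U:=V(G)\setminus B$.

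The main claim is that any two $u,v\in U$ lying in clusters of a common tile $T\in\mathcal{T}$ are $(K_r,\beta_1 n,1)$-inner-reachable; this gives each $u\in U$ at least $\Omega(m)\ge\gamma_1 n$ inner-reachable partners inside its tile upon setting $\gamma_1:=1/(2k)$. To verify the claim, consider a $K^{=}_{2}$-tile $\{V_a,V_b\}$ with $u,v\in V_a\cap U$ (the other intra-tile configurations are analogous, handled by parallel embeddings modelled on Lemmas~\ref{lem2.17} and~\ref{lem2.171}). Double-edge density $\ge 1/2+\beta$ together with typicality of $u,v$ yields $|N(u)\cap N(v)\cap V_b\cap U|\ge\beta m$, and this set contains $\Omega(m)$ vertex-disjoint copies of $K_{r-2}$ by iteratively removing $K_{r-2}$-free subsets of size $\le\alpha n$. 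For each such $K_{r-2}=\{z_1,\dots,z_{r-2}\}$ I extend it to a $K_{r-1}$ in $N(u)\cap N(v)\cap U$ by adjoining a vertex $w\in V_c\cap U$, where $V_c\in V(\mathcal{T})$ is a helper cluster chosen to be an $R$-neighbor of both $V_a$ and $V_b$: since $\delta(R)\ge(1+\mu)k$ and $|V(\mathcal{T})|\ge(1-\eta)k$, $V_a$ and $V_b$ share at least $\mu k/2$ such $V_c$, and $\varepsilon$-regularity of the pairs $(V_c,V_a),(V_c,V_b)$ combined with typicality forces $|N(u)\cap N(v)\cap\bigcap_i N(z_i)\cap V_c\cap U|$ to be linear in $m$. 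A greedy selection of distinct helpers yields $\Omega(m)$ vertex-disjoint copies of $K_{r-1}$ in $N(u)\cap N(v)\cap U$, which no deletion of $\beta_1 n$ vertices can fully destroy once $\beta_1\ll 1/k$.

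The principal obstacle is precisely the $K_{r-2}$-to-$K_{r-1}$ extension: a naive vertex-by-vertex chaining argument inside $N(u)\cap N(v)$ fails when $\mu$ is small, since each successive intersection with an additional neighborhood can strip off up to $(1/2-\mu)n$ vertices, making the running intersection empty long before one assembles a $K_{r-1}$. The remedy is to delegate the extension to a helper cluster $V_c\in V(\mathcal{T})$ in the common $R$-neighborhood of $V_a$ and $V_b$; such $V_c$'s exist in $\Omega(k)$ quantity by $\delta(R)\ge(1+\mu)k$, and their utility comes from $\varepsilon$-regularity together with $\alpha_{r-2}(G)\ll m$, which supplies both the many $K_{r-2}$'s and the linear-sized pool of extension candidates.
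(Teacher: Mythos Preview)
Your extension step contains a genuine gap. You assert that $\varepsilon$-regularity of $(V_c,V_a)$ and $(V_c,V_b)$ ``combined with typicality forces $|N(u)\cap N(v)\cap\bigcap_i N(z_i)\cap V_c\cap U|$ to be linear in $m$,'' but this does not follow. The helper $V_c$ is only a (possibly single-)edge neighbour of $V_a$ in $R$, so $d(V_a,V_c)$ may be as small as $\beta$. Typicality of $u,v\in V_a$ then gives $|N(u)\cap V_c|,|N(v)\cap V_c|\ge(\beta-\varepsilon)m$, but says nothing about $|N(u)\cap N(v)\cap V_c|$: two subsets of $V_c$ of size $\beta m$ may be disjoint once $\beta<\tfrac12$. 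The same obstruction blocks intersecting the neighbourhoods of the $r-2$ fixed vertices $z_i\in V_b$ inside $V_c$. Regularity lets you control the degree of a single vertex into a prescribed large target set; it does not give a linear common neighbourhood of several fixed vertices into a low-density partner. The ``analogous'' configurations you defer suffer the same problem already at the first step: for a $K_3$-tile (all single edges), even $|N(u)\cap N(v)\cap V_b|$ is uncontrolled when $d(V_a,V_b)\approx\beta$.

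The remedy is to drop the tiling from Lemma~\ref{lem4.4} here (it is not needed for $1$-reachability) and instead cover each cluster by a triangle in the simple reduced graph, which exists because $\delta(R)>k/2$. For $v\in U_1$ in a triangle $\{V_1,V_2,V_3\}$, one restricts not to ``typical'' $u$ but to those $u\in U_1$ with $|N(u)\cap N(v)\cap U_p|\ge(\beta/2)^2m$ for both $p\in\{2,3\}$; regularity applied with target set $N(v)\cap U_p$ (of size $\ge(\beta/2)m$) shows all but $2\varepsilon m$ vertices $u$ qualify. The $K_{r-1}$-connector is then built by choosing a single vertex $w\in N(u)\cap N(v)\cap U_2$ first and only then finding a $K_{r-2}$ inside $N(w)\cap N(u)\cap N(v)\cap U_3$, the latter set being linear by one more application of regularity. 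The order matters: picking $w$ before the $K_{r-2}$ avoids ever needing a common neighbour of many prescribed vertices in a sparse pair.
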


In the second step, we only need to apply the following result on $G[U]$.

\begin{lemma}[]\label{lem5.4}
Given $r\in \mathbb{N}$ with $r\ge 4$ and constants $\mu, \beta_{1}, \gamma_{1}$ with $0<\mu, \beta_{1}, \gamma_{1}<1$, there exist positive constants $\alpha, \beta$ and $t\in \mathbb{N}$ such that the following holds for sufficiently large $n$.
Let $G$ be an $n$-vertex graph with $\delta(G)\geq \left(\frac{1}{2}+\mu\right)n$ and $\alpha_{r-2}(G)\leq \alpha n$ such that every vertex in $V(G)$ is $(K_{r}, \beta_{1}n, 1)$-reachable to at least $\gamma_{1}n$ other vertices. Then $V(G)$ is $(K_{r}, \beta n, t)$-closed.
\end{lemma}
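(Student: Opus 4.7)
The plan is to establish closedness of $V(G)$ via the standard two-phase strategy of the lattice-based absorbing method: first, partition $V(G)$ via chained 1-reachability into a bounded number of $(K_r, \beta_0 n, t_0)$-closed parts, and second, use the minimum degree hypothesis to force that this partition is trivial.

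For the first phase, consider the auxiliary graph $H$ on $V(G)$ with $uv \in E(H)$ iff $u, v$ are $(K_r, \beta_1 n, 1)$-reachable; by hypothesis $\delta(H) \geq \gamma_1 n$. Let $V_1, \dots, V_k$ be the connected components of $H$, so each has size at least $\gamma_1 n + 1$ and $k \leq 1/\gamma_1$. An Erd\H{o}s--Pach--Pollack--Tuza-type diameter bound then gives that the diameter of each component is $O(1/\gamma_1)$. Reachability is approximately transitive: if $u, w$ are $(K_r, m_1, t_1)$-reachable via a connector $S_1$ and $w, v$ are $(K_r, m_2, t_2)$-reachable via $S_2$, then $S_1 \cup S_2 \cup \{w\}$ witnesses $(K_r, m, t_1+t_2)$-reachability of $u, v$ with $m = \min(m_1, m_2) - r(t_1+t_2)$ (one simply concatenates the two $K_r$-factorizations, using that the intermediate vertex $w$ is absorbed into both halves). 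Chaining along BFS-paths of length $O(1/\gamma_1)$ inside each component yields constants $\beta_0, t_0$ for which each $V_i$ is $(K_r, \beta_0 n, t_0)$-closed.

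For the second phase, suppose for contradiction that $k \geq 2$ and fix two parts $V_i, V_j$. The goal is to exhibit some $u \in V_i, v \in V_j$ that is $(K_r, \beta n, t)$-reachable for a bounded $t$; by composition with the closedness of $V_i$ and $V_j$, this would yield reachability across $V_i \cup V_j$, contradicting that they are distinct components of $H$. Since $|V_i|, |V_j| \geq \gamma_1 n$ and $\delta(G) \geq (\tfrac{1}{2}+\mu)n$, a short averaging argument produces $\Omega(n^2)$ cross-edges $uv \in V_i \times V_j$ whose common neighborhood satisfies $|N(u) \cap N(v)| \geq 2\mu n$. The hypothesis $\alpha_{r-2}(G) \leq \alpha n$ produces a $K_{r-2}$ in this common neighborhood, which I would promote to a 2-step $K_r$-connector of size $\leq 2r-1$ via an exchange gadget: add extension vertices $a \in N(u)$ and $b \in N(v)$ that turn the $K_{r-2}$ into $K_{r-1}$'s of $N(u)$ and $N(v)$ respectively, and then secure a further $K_{r-1}$ inside the common neighborhood of $a$ and $b$ by iterating $K_{r-2}$-extraction on progressively smaller but still linear-sized sets.

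The main technical obstacle is building this connector robustly at the tight minimum degree $(\tfrac12+\mu)n$: every time one restricts to the common neighborhood of an additional vertex, the candidate set loses up to $(\tfrac12-\mu)n$, so only $O(1)$ such restrictions are affordable, and at each step one must maintain a candidate set substantially larger than $\alpha_{r-2}(G) = o(n)$. When a single 2-step connector is insufficient (for example when $\mu$ is small), one composes several smaller gadgets through intermediate ``hub'' vertices drawn from the abundant $\gamma_1 n$ 1-reachable partners, pushing $t$ up to a bounded constant $t = t(r, \mu, \gamma_1)$. The parameter $\beta$ is then chosen small enough, in terms of $r, \mu, \gamma_1, \beta_1$, that the cumulative forbidden-set losses incurred by all the chained absorbers remain well within the $\beta n$ allowance; this gives the desired $(K_r, \beta n, t)$-closedness of $V(G)$.
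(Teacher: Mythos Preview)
Your first phase is essentially the content of Lemma~\ref{lem5.8} and is fine. There is a small logical slip in the second phase: exhibiting $t$-reachability between some $u\in V_i$ and $v\in V_j$ does \emph{not} contradict that $V_i,V_j$ are distinct components of the $1$-reachability graph $H$; it only shows $V_i\cup V_j$ is closed, so you should iterate the merge (as the paper does via a minimal partition) rather than argue by contradiction.

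The genuine gap is your cross-component connector. Your $t=2$ gadget requires an extension vertex $a\in N(u)\cap\bigcap_{i} N(w_i)$, where $w_1,\dots,w_{r-2}$ form the $K_{r-2}$ you found in $N(u)\cap N(v)$; this is the common neighbourhood of $r-1$ vertices, and with $\delta(G)=(\tfrac12+\mu)n$ it is only guaranteed to have size at least $\big((r-1)\mu-\tfrac{r-3}{2}\big)n$, which is negative already for $r=4$ once $\mu<\tfrac16$. The same obstruction kills the step ``secure a $K_{r-1}$ inside $N(a)\cap N(b)$'': the hypothesis $\alpha_{r-2}(G)\le\alpha n$ yields only a $K_{r-2}$ there, and extending it by one vertex again asks for a common neighbourhood of $r$ vertices. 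Your fallback of routing through $1$-reachable ``hubs'' does not help, since those hubs lie in the same component by definition and every individual hop still faces the same common-neighbourhood shortfall. The paper sidesteps all of this: instead of building a connector for a fixed pair $u,v$, it works in the transferral framework (Lemma~\ref{lem5.9}) and proves Lemma~\ref{lem5.10} by applying the regularity lemma to refine $\mathcal P$, then arguing in the reduced multigraph that any crossing double-edge or crossing $K_3^{=}$ yields two $(K_r,\beta')$-robust $r$-vectors differing by $\mathbf u_i-\mathbf u_j$, while the absence of both forces a degree contradiction. Regularity is what supplies the ``typical'' vertices with controlled degrees into clusters, replacing the uncontrolled common-neighbourhood intersections your gadget needs.
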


Obviously, Lemma \ref{lem7.1} is an immediate corollary of the above-mentioned two lemmas. In the following, we will give the proofs of Lemma \ref{lem5.5} and Lemma \ref{lem5.4} respectively.

\subsubsection{Proof of Lemma \ref{lem5.5} : 1-reachability}
The proof of Lemma \ref{lem5.5} goes roughly as follows.
We first apply the regularity lemma on $G$ to obtain a partition and a reduced graph $R$.
The minimum degree of $R$ is large enough to guarantee that every cluster $V_i$ is covered by a copy of $K_3$.
In this case, for every $V_{i}$ by using Lemma \ref{lem2.17} and Lemma \ref{lem2.171}, we are able to show that almost all vertices in $V_{i}$ are $1$-reachable to linearly many vertices from $V_{i}$, where only a small proportion of left-over vertex in $V_i$ will be included in $B$.


As sketched above, the following fact presents a minimum degree of the reduced graph provided the minimum degree of $G$.
\begin{fac}[]\label{fact2.51}
Let $n\in \mathbb{N}$, $0<\varepsilon, \beta\leq \frac{\mu}{10}$ with $\beta\in [0, 1]$, and $G$ be an $n$-vertex graph with $\delta(G)\geq(\frac{1}{2}+\mu)n$. Let $V(G)=V_{0}\cup \cdots \cup V_{k}$ be a vertex partition of $V(G)$ satisfying Lemma \ref{lem2.3} (1)-(5). We denote the reduced graph as $R_{\beta, \varepsilon}$. Then for every $V_{i}\in V(R_{\beta, \varepsilon})$ we have
\begin{center}
$d_{R_{\beta, \varepsilon}}(V_{i})\geq\left(\frac{1}{2}+\frac{\mu}{2}\right)k$.
\end{center}
\end{fac}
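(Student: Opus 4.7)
The plan is to mirror the proof of Fact~\ref{fact2.5}, changing one piece of accounting to reflect the different reading of $d_{R_{\beta,\varepsilon}}(\cdot)$. In Fact~\ref{fact2.5} the degree in $R_{\beta,\varepsilon}$ counts a double-edge twice, so each edge of the multigraph can be charged against at most $(\tfrac{1}{2}+\beta)m^2$ edges of $G'$. Here, in line with Remark~\ref{remk2.6}, I read $d_{R_{\beta,\varepsilon}}(V_i)$ as the number of \emph{distinct} neighbours of $V_i$ in $R_{\beta,\varepsilon}$ (both a single- and a double-edge contributing $1$), and so each neighbour $V_j$ only carries the trivial bound of $|V_i||V_j|=m^2$ edges of $G'$. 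Losing the factor $\tfrac{1}{2}+\beta$ costs exactly the factor of two between $(1+\mu)k$ and $(\tfrac{1}{2}+\tfrac{\mu}{2})k$, explaining why the target bound is halved.

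Executing this, I would start from Lemma~\ref{lem2.3}(3), which gives $d_{G'}(v)\geq (\tfrac{1}{2}+\mu-\beta-\varepsilon)n$ for every $v\in V_i$. Lemma~\ref{lem2.3}(4) removes $V_i$-internal edges, and $|V_0|\leq\varepsilon n$ bounds the $G'$-edges from $V_i$ to $V_0$ by $m\varepsilon n$. Summing over the $m$ vertices in $V_i$, the number of $G'$-edges leaving $V_i$ into $V(G)\setminus(V_i\cup V_0)$ is at least $m\bigl(\tfrac{1}{2}+\mu-\beta-2\varepsilon\bigr)n$. Since pairs of density $0$ contribute nothing, each such edge lies in a regular pair $(V_i,V_j)$ with $V_j$ a neighbour of $V_i$ in $R_{\beta,\varepsilon}$, and each such pair carries at most $m^2$ edges. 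Therefore
\[
d_{R_{\beta,\varepsilon}}(V_i)\;\geq\;\frac{m\bigl(\tfrac{1}{2}+\mu-\beta-2\varepsilon\bigr)n}{m^2}\;=\;\bigl(\tfrac{1}{2}+\mu-\beta-2\varepsilon\bigr)\frac{n}{m}\;\geq\;\bigl(\tfrac{1}{2}+\mu-\beta-2\varepsilon\bigr)k,
\]
where the last inequality uses $km\leq n$.

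Finally, the hypothesis $0<\varepsilon,\beta\leq\mu/10$ yields $\beta+2\varepsilon\leq 3\mu/10\leq \mu/2$, producing $d_{R_{\beta,\varepsilon}}(V_i)\geq(\tfrac{1}{2}+\tfrac{\mu}{2})k$, as claimed. No genuine obstacle is expected; the only step needing caution is fixing the intended meaning of $d_{R_{\beta,\varepsilon}}$, which here must be the simple-graph neighbour count rather than the multiplicity-weighted version used in Fact~\ref{fact2.5}. This reading is exactly the one required by the sketch preceding the fact, where the threshold $\tfrac{k}{2}$ is needed to ensure every cluster sits in a triangle of the reduced graph.
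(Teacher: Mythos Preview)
Your proof is correct and essentially identical to the paper's: both argue that each neighbour of $V_i$ in $R_{\beta,\varepsilon}$ accounts for at most $m^2$ edges of $G'$, then divide the lower bound $m(\tfrac{1}{2}+\mu-\beta-2\varepsilon)n$ on the $G'$-edges leaving $V_i$ by $m^2$ and use $km\le n$ and $\beta+2\varepsilon\le\mu/2$. Your explicit discussion of why the degree here must be read as the simple-neighbour count (rather than the multiplicity-weighted count of Fact~\ref{fact2.5}) is a helpful clarification the paper leaves implicit.
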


\begin{proof}[Proof] Note that $|V_{0}|\leq \varepsilon n$ and $|V_{i}|=m$ for each $i\in [k]$.
Every edge in $R_{\beta, \varepsilon}$ represents less than $m^{2}$ edges in $G'-V_{0}$.
Thus we have
\begin{align*}
    d_{R_{\beta, \varepsilon}}(V_{i}) & \geq \frac{|V_{i}|(\delta(G)-(\beta+\varepsilon)n-\varepsilon n)}{m^{2}} \\
    & \geq \frac{\left(\frac{1}{2}+\mu-2\varepsilon-\beta\right)mn}{m^{2}} \\
    & \geq \left(\frac{1}{2}+\frac{\mu}{2}\right)k,
\end{align*}
since $0<\varepsilon, \beta\leq \frac{\mu}{10}$.
\end{proof}

Next, we prove Lemma \ref{lem5.5}.

\begin{proof} [Proof of Lemma \ref{lem5.5}]Given $r\in \mathbb{N}$ with $r\ge 4$ and constants $\tau, \mu$ with $0<\tau< \mu$, we shall choose

\begin{center}
$\frac{1}{n}\ll\alpha\ll \beta_{1}, \gamma_{1}\ll\frac{1}{k}\ll \varepsilon \ll \tau, \mu$.
\end{center}
Let $\beta=\frac{\mu}{10}$, $G$ be an $n$-vertex graph with $\delta(G)\geq \left(\frac{1}{2}+\mu\right)n$ and $\alpha_{r-2}(G)\leq \alpha n$.
Applying Lemma \ref{lem2.3} on $G$ with $\varepsilon, \beta>0$, we obtain an $\varepsilon$-regular partition $\mathcal{P}=\{V_{0}, V_{1}, \dots, V_{k}\}$ of $G$.
Let $m:=|V_{i}|$ for each $i\in [k]$ and $R:=R_{\beta, \varepsilon}$ be a reduced graph
for $\mathcal{P}$.
By Fact \ref{fact2.51}, we have $\delta(R)\geq \left(\frac{1}{2}+\frac{\mu}{2}\right)k$.
Clearly, every cluster is in a copy of $K_3$ in $R$.
Hence, there exists a minimal family of copies of $K_{3}$, say $\{\mathcal{K}_{1},\dots, \mathcal{K}_{\ell}\}$, such that $V(R)=\bigcup_{i=1}^{\ell}$ $V(\mathcal{K}_{i})$.
Obviously, $\ell\leq k$.
Now, we first define the vertex set $B$ as follows:

\begin{itemize}
  \item [(1)]
  let $S_{i}=V(\mathcal{K}_{i})\backslash \bigcup_{p=1}^{i-1} V(\mathcal{K}_{p})$ for $i\in [\ell]$, where $S_{1}=V(\mathcal{K}_{1})$. Then it follows from the minimality that $S_{i}\neq \emptyset$ and we write $S_{i}=\{V_{i_{1}},\dots, V_{i_{s_{i}}}\}$ for some integer $s_{i}:=|S_{i}|$;
  \item [(2)]
  for $i\in [\ell]$ and $j\in [s_{i}]$, $B_{i_{j}}:=\Big\{v\in V_{i_{j}}\Big| |N(v)\cap V_{s}|\leq (d(V_{i_{j}}, V_{s})-\varepsilon)|V_{s}|$ for some $V_{s}\in V(\mathcal{K}_{i})\, \text{with}\, s\neq i_j\Big\}$, and $B_{i}:=\bigcup_{j=1}^{s_{i}} B_{i_{j}}$;
  \item [(3)]
  $B=\bigcup_{i=1}^{\ell} B_{i}$.
\end{itemize}

Observe that $\{S_{1}, S_{2},\dots, S_{\ell}\}$ is a partition of $V(R)$ and $|S_{i}|\le 3$, $i\in [\ell]$. Moreover, for every $i\in [\ell]$ and $j\in [s_{i}]$, we have $|B_{i_{j}}|\le \varepsilon m|\mathcal{K}_{i}|=3\varepsilon m$.
Thus $|B|\leq3\varepsilon mk\leq \tau n$ since $\varepsilon \ll \tau$.

Let $U:=V(G)\backslash B=\bigcup_{i=1}^{k} U_{i}$ where $U_{i}:= V_{i}\backslash B$.
Then $|U_{i}|\geq m-3\varepsilon m$.
By Lemma \ref{lem2.2}, $d(U_{i}, U_{j})\geq d(V_{i}, V_{j})-\varepsilon$ for distinct $i, j\in [k]$.
Next, we shall prove that every $v\in U$ is 1-inner-reachable to linearly many vertices.

For each $i\in [k]$ and any vertex $v\in U_{i}$, we choose the minimum $j\in [\ell]$ such that $V_{i}\in V(\mathcal{K}_{j})$.
Without loss of generality, we may assume $i=1$ and write $V(\mathcal{K}_{j})=\{V_{1}, V_{2}, V_{3}\}$.
Thus for distinct $p, q\in [3]$, it follows from (2) and the fact $\varepsilon\ll \beta, \frac{1}{r}$ that every vertex $u\in U_{p}$ has at least $|N(u)\cap V_{q}|-|B\cap V_{q}|\geq d(V_{p}, V_{q})m-\varepsilon m-3\varepsilon m\geq \frac{\beta}{2}m$ neighbors in $U_{q}$.

Recall that $v\in U_{1}$.
We denote by $U_{1}^{\ast}$ be the set of vertices $u\in U_{1}$ such that for every $p\in \{2, 3\}$, $|N(u)\cap N(v)\cap U_{p}|\geq \left(\frac{\beta}{2}\right)^{2}m$.
The following claim would complete our proof because $|U_{1}^{\ast}|\geq |U_1|-2\varepsilon m\geq (1-5\varepsilon)m\geq \gamma_{1}n$, where $\gamma_{1}\ll \frac{1}{k}\ll \varepsilon$.

\begin{claim}
The vertex $v$ is $(K_{r}, \beta_{1}n, 1)$-reachable to every $u\in U^{\ast}_{1}$.
\end{claim}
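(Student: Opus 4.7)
The plan is to exhibit, for any $W \subseteq V(G)$ with $|W| \leq \beta_1 n$, a set $S \subseteq U \setminus W$ of size $r-1$ such that both $S \cup \{u\}$ and $S \cup \{v\}$ span $K_r$ in $G$. Equivalently, $S$ must be a $K_{r-1}$ contained in $(N(u) \cap N(v)) \setminus W$ and lying entirely in $U$. The natural place to look is inside $U_2 \cup U_3$: setting $X := (N(u) \cap N(v) \cap U_2) \setminus W$ and $Y := (N(u) \cap N(v) \cap U_3) \setminus W$, the defining property of $U_1^*$ gives $|X|, |Y| \geq (\beta/2)^2 m - \beta_1 n$, and the hierarchy $\beta_1 \ll 1/k \ll \varepsilon \ll \beta$ ensures $|X|, |Y| \geq \beta^2 m / 8 \geq \varepsilon m$.

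The key structural step is to locate $y \in Y$ whose neighborhood meets $X$ in a large set. Here I would invoke the $\varepsilon$-regularity of the pair $(V_2, V_3)$, which has density at least $\beta$: a standard averaging argument (of the kind used in the slicing lemma) shows that at most $\varepsilon m$ vertices $y \in V_3$ can satisfy $|N(y) \cap X| < (d(V_2, V_3) - \varepsilon)|X|$. Since $|Y| > \varepsilon m$, some $y \in Y$ has $|N(y) \cap X| \geq (\beta - \varepsilon) \cdot \beta^2 m / 8 \geq \beta^3 m / 16$, and the choice $\alpha \ll 1/k$ then makes this quantity strictly larger than $\alpha n \geq \alpha_{r-2}(G)$. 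Consequently $G[N(y) \cap X]$ contains a copy of $K_{r-2}$ on some vertex set $T$.

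Finally I take $S := T \cup \{y\}$. Then $|S| = r-1$; $S$ is a clique because $T$ induces $K_{r-2}$ and $y$ is a common neighbor of $T$; moreover $S \subseteq N(u) \cap N(v)$ by construction, so $S \cup \{u\}$ and $S \cup \{v\}$ both span $K_r$; and $S \subseteq (U_2 \cup U_3) \setminus W \subseteq U \setminus W$ as required. The argument will work uniformly over $W$, yielding the $(K_r, \beta_1 n, 1)$-inner-reachability. I expect the only real point of care to be calibrating the hierarchy $\alpha \ll \beta_1, \gamma_1 \ll 1/k \ll \varepsilon \ll \beta$ so that simultaneously $\beta_1 n$ is absorbed into the lower bounds on $|X|, |Y|$, regularity supplies a $y$ with $|N(y) \cap X|$ of linear size, and that common neighborhood is large enough to apply the $K_{r-2}$-independence hypothesis.
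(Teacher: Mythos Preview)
Your proposal is correct and follows essentially the same route as the paper. The paper sets $N_p:=N(u)\cap N(v)\cap U_p$ for $p\in\{2,3\}$, removes the forbidden set $W$ to obtain $N'_p$ of size at least $\beta^2 m/8$, applies the Slicing Lemma to get regularity on $(N'_2,N'_3)$, picks $w\in N'_2$ with $|N(w)\cap N'_3|\ge \beta^3 m/16\ge\alpha n$, and then uses $\alpha_{r-2}(G)\le\alpha n$ to find a $K_{r-2}$ inside $N(w)\cap N'_3$; the only cosmetic differences from your argument are that you invoke regularity directly rather than through the Slicing Lemma, and you swap the roles of the two clusters (your single vertex lies in $U_3$ and the $K_{r-2}$ in $U_2$).
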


To prove this, we arbitrary choose a vertex $u\in U^{\ast}_{1}$. Let $N_{p}:=N(u)\cap N(v)\cap U_{p}$ for each $p\in \{2, 3\}$ and $N_{1}=U_{1}\backslash \{u, v\}$.
Then $|N_{p}|\geq \left(\frac{\beta}{2}\right)^{2}m$ for each $p\in [3]$.
For each $p\in [3]$, $N'_{p}$ comes from $N_{p}$ by deleting any $\beta_{1}n$ vertices.
Hence, $|N_{p}'|\geq |N_{p}|-\beta_{1}n\geq\left(\frac{\beta}{2}\right)^{2}m-\beta_{1}n\geq\frac{\beta^{2}}{8}m$ since $\beta_{1}\ll \beta$.
By Lemma \ref{lem2.2}, $(N'_{p}, N'_{q})$ is $\varepsilon'$-regular with $\varepsilon':=\max\{2\varepsilon, \frac{|V_{q}|\varepsilon}{|N'_{q}|}\}\leq\frac{8\varepsilon}{\beta^{2}}$ and $d(N'_{p}, N'_{q})\geq d(V_{p}, V_{q})-\varepsilon$ for distinct $p, q\in [3]$.
Hence, there exists a vertex, say $w\in N'_2$, with $|N(w)\cap N'_3|\geq (\beta-\varepsilon')|N'_3|\geq \frac{\beta}{2}|N'_3|\geq \frac{\beta^3}{16}m\geq \alpha n$, as $\frac{1}{n}\ll\alpha\ll\varepsilon\ll\mu$.
As $|N(w)\cap N'_3|\geq \alpha n$, $G[N(w)\cap N'_3]$ contains a copy of $K_{r-2}$, say $H$.
Note that $H$ together with $w$ forms a copy of $K_{r-1}$ which lies in $N'_{2}\cup N'_{3}\subseteq N(v)\cap N(u)$.
Hence, $v$ is $(K_{r}, \beta_{1}n, 1)$-reachable to $u$.
\end{proof}

\subsubsection{Proof of Lemma \ref{lem5.4}}

In the following, we shall use the latticed-based absorbing method developed by Han \cite{MR3632565} and begin with the following notion introduced by Keevash and Mycroft \cite{MR3290271}. Let $G$ be an $n$-vertex graph.
We will often work with a vertex partition $\mathcal{P}=\{V_{1}, \dots, V_{k}\}$ of $V(G)$ for some integer $k\geq 1$.
For any subset $S\subseteq V(G)$, the \emph{index vector} of $S$ with respect to $\mathcal{P}$, denoted by $i_{\mathcal{P}}(S)$, is the vector in $\mathbb{Z}^{k}$ whose $i$th coordinate is the size of the intersections of $S$ with $V_{i}$ for each $i\in [k]$. For each $j\in [k]$, let $\textbf{u}_{j}\in \mathbb{Z}^{k}$ be the $j$th unit vector, i.e. $\textbf{u}_{j}$ has 1 on the $j$th coordinate and 0 on the other coordinates.
A \emph{transferral} is a vector of the form $\textbf{u}_{i}-\textbf{u}_{j}$ for some distinct $i, j\in [k]$.
A vector $\textbf{v}\in \mathbb{Z}^{k}$ is an \emph{$s$-vector} if all its coordinates are non-negative and their sum is $s$.
Given $\mu>0$ and $r\in \mathbb{N}$, a $r$-vector \textbf{v} is called \emph{$(K_r, \mu)$-robust} if for any set $W$ of at most $\mu n$ vertices, there is a copy of $K_r$ in $G-W$ whose vertex set has an index vector $\textbf{v}$.
Let $I^{\mu}(\mathcal{P})$ be the set of all $(K_r, \mu)$-robust $r$-vectors and $L^{\mu}(\mathcal{P})$ be the lattice (i.e. the additive subgroup) generated by $I^{\mu}(\mathcal{P})$.

Here is a brief proof outline for Lemma \ref{lem5.4}.
In order to prove that $V(G)$ is closed, we adopt a less direct approach and build on the merging techniques developed in \cite{HMWY2021}.
We first partition $V(G)$ into a constant number of parts each of which is closed (see Lemma \ref{lem5.8}). Then we try to merge some of them into a larger (still closed) part by analyzing the graph structures. Lemma \ref{lem5.9} allows us to iteratively merge two distinct parts into a closed one, given the existence of a transferral. Therefore, the key step is to find a transferral (see Lemma \ref{lem5.10}), where we shall use the regularity method, Lemma \ref{lem2.17} and Lemma \ref{lem2.171}.

The following lemma can be used to construct a partition such that each part is closed.

\begin{lemma}[\cite{HMWY2021}, Lemma 3.10]\label{lem5.8}
For any positive constants $\gamma_{1}, \beta_{1}$ and $r\in \mathbb{N}$ with $r\geq 3$, there exist $\beta_{2}=\beta_{2}(\gamma_{1}, \beta_{1}, r)>0$ and $t_{2}\in \mathbb{N}$ such that the following holds for sufficiently large n.
Let $G$ be an $n$-vertex graph such that every vertex in $V(G)$ is $(K_{r}, \beta_{1} n, 1)$-reachable to at least $\gamma_{1} n$ other vertices.
Then there is a partition $\mathcal{P}=\{V_{1}, \dots, V_{p}\}$ of $V(G)$ with $p\leq \lceil\frac{1}{\gamma_{1}}\rceil$ such that for each $i\in [p]$, $V_{i}$ is $(K_{r}, \beta_{2} n, t_{2})$-closed and $|V_{i}|\geq \frac{\gamma_{1}}{2}n$.
\end{lemma}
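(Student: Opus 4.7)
My plan is to take $\mathcal{P}$ to be a clustering of the $1$-reachability graph and to verify closedness via a path-composition argument. Define the auxiliary graph $H$ on $V(G)$ by $uv \in E(H)$ iff $u$ and $v$ are $(K_r, \beta_1 n, 1)$-reachable in $G$; by hypothesis $\delta(H) \geq \gamma_1 n$.

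The core ingredient is a \emph{path-composition lemma}: if $u = w_0, w_1, \dots, w_s = v$ is a path in $H$ with $\{w_1, \dots, w_{s-1}\} \cap W = \emptyset$ for some $W$ of size at most $\beta_2 n$, and $\beta_2 n + rs \leq \beta_1 n$, then $u$ is $(K_r, \beta_2 n, s)$-reachable to $v$ avoiding $W$. One greedily picks a $(K_r, 1)$-connector $S_i$ of size at most $r - 1$ for each pair $(w_{i-1}, w_i)$, avoiding $W \cup \{w_0, \dots, w_s\} \cup \bigcup_{j < i} S_j$, whose total size is at most $\beta_2 n + rs \leq \beta_1 n$ and hence permitted by the $1$-reachability of $w_{i-1}$ and $w_i$. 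Then $S := \bigcup_i S_i \cup \{w_1, \dots, w_{s-1}\}$ has size at most $rs - 1$, and the collections $\{S_i \cup \{w_{i-1}\}\}_{i=1}^s$ and $\{S_i \cup \{w_i\}\}_{i=1}^s$ are $K_r$-factors of $G[S \cup \{u\}]$ and $G[S \cup \{v\}]$, respectively.

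For the clustering, I would take the equivalence classes of (the transitive closure of) the relation $u \sim v$ iff $|N_H(u) \cap N_H(v)| \geq \varepsilon n$, for a small $\varepsilon = \varepsilon(\gamma_1, \beta_1, r) > 0$. By inclusion-exclusion applied to the $\geq \gamma_1 n$-sized sets $N_H(u_i)$ for representatives $u_i$ of distinct classes, one has $p \gamma_1 n - \binom{p}{2} \varepsilon n \leq n$, which forces $p \leq \lceil 1/\gamma_1 \rceil$ provided $\varepsilon$ is sufficiently small relative to $\gamma_1$. Within a class, any two vertices are connected by a $\sim$-chain of length at most $p - 1$, which converts into an $H$-path of length at most $2(p-1)$ by inserting at each step a bridging vertex chosen from $N_H(w_i) \cap N_H(w_{i+1}) \setminus W$ (possible since $\varepsilon n - \beta_2 n > 0$ by the choice of $\beta_2$). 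Combining this with the path-composition lemma yields $(K_r, \beta_2 n, t_2)$-closedness of each class with $t_2 := 2\lceil 1/\gamma_1 \rceil$ and $\beta_2 := \min(\varepsilon, \beta_1)/2$.

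The main obstacle will be ensuring that each class has size at least $\gamma_1 n / 2$: the straightforward bound from inclusion-exclusion controls only the total size and not the per-class sizes. I would address this with a post-processing step that merges any class of size less than $\gamma_1 n / 2$ into a larger one via a dominant-neighbor argument (each vertex of the small class has at least $\gamma_1 n / 2$ of its $1$-reachability neighbors outside the class, so by pigeonhole most lie in a single other class), at the cost of slightly degrading $\beta_2$ and increasing $t_2$ by an additive constant to accommodate one extra bridging step. The precise calibration of $\varepsilon$, $\beta_2$, and the merging procedure is the delicate part; the path-composition and greedy-avoidance arguments are otherwise routine parameter bookkeeping.
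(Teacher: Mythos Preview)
The paper does not prove this lemma; it is quoted verbatim from \cite{HMWY2021} (their Lemma~3.10) and used as a black box. So there is no in-paper argument to compare against, and your sketch has to be judged on its own merits. The overall architecture you propose --- build the $1$-reachability graph $H$, concatenate connectors along $H$-paths, and cluster by ``large common $H$-neighbourhood'' --- is indeed the standard route to results of this type and is sound in outline.

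There is, however, one genuine gap. You assert that within an equivalence class any two vertices are joined by a $\sim$-chain of length at most $p-1$, but you give no reason, and in fact the number of equivalence classes $p$ has nothing to do with the diameter of a component of the $\sim$-graph. What \emph{does} bound the chain length is the following: in a shortest $\sim$-chain $w_0\sim w_1\sim\dots\sim w_k$, any two vertices at distance at least $2$ are non-$\sim$ (else one could shortcut), so $w_0,w_2,w_4,\dots$ are pairwise non-$\sim$; the same Bonferroni/convexity count you used for $p$ then shows there are at most $\lceil 1/\gamma_1\rceil$ of them, giving $k\le 2\lceil 1/\gamma_1\rceil-1$. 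This still yields a constant bound on $t_2$, just twice what you wrote; you should insert this argument and adjust $t_2$ accordingly.

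Two smaller points. First, your Bonferroni inequality $p\gamma_1-\binom{p}{2}\varepsilon\le 1$ is a downward-opening quadratic in $p$, so by itself it does not rule out very large $p$; you need to combine it with the convexity/Cauchy--Schwarz count $\sum_{i<j}|N_H(u_i)\cap N_H(u_j)|\ge n\binom{p\gamma_1}{2}$ to force $p\le\lceil 1/\gamma_1\rceil$ once $\varepsilon<\gamma_1^2$. Second, in your merging step different vertices of a small class may pick different target classes, so ``merge the class into a larger one'' should really read ``redistribute each vertex $v$ of a small class into a large class $C'_v$ containing $\ge \gamma_1 n/(2p)$ of its $H$-neighbours''; closedness of the enlarged classes then follows from one extra composition step. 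A tidier alternative that avoids merging altogether is to enlarge your relation to $u\approx v$ iff $uv\in E(H)$ or $|N_H(u)\cap N_H(v)|\ge\varepsilon n$: then $N_H(v)$ lies entirely in $v$'s class, giving $|V_i|\ge\gamma_1 n$ for free.
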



\begin{lemma}[\cite{HMWY2021}, Lemma 4.4]\label{lem5.9}
Given $t, r\in \mathbb{N}$ and constant $\beta>0$, the following holds for sufficiently large $n$.
Let $G$ be an $n$-vertex graph with a partition $\mathcal{P}=\{V_{1}, \dots, V_{p}\}$ of $V(G)$ such that each $V_{i}$ is $(K_{r}, \beta n, t)$-closed.
For distinct $i, j\in [p]$, if there exist two $r$-vectors $\textbf{s}, \textbf{t}\in I^{\beta}(\mathcal{P})$ such that $\textbf{s} - \textbf{t}=\textbf{u}_{i} - \textbf{u}_{j}$, then $V_{i}\cup V_{j}$ is $(K_{r}, \frac{\beta n}{2}, 2rt)$-closed.
\end{lemma}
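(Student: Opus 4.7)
The plan is to handle the only nontrivial case: a cross pair $u \in V_i$, $v \in V_j$ with $i \neq j$, together with an arbitrary forbidden set $W \subseteq V(G)$ of size at most $\beta n / 2$; pairs within a single $V_\ell$ are immediate from its $(K_r, \beta n, t)$-closedness since $\beta n / 2 < \beta n$. The goal is to produce a $K_r$-connector $S$ for $u, v$ of size at most $2 r^2 t - 1$ avoiding $W$.

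First I would extract the two transferral witnesses. Because $\mathbf{s} \in I^{\beta}(\mathcal{P})$ and $|W \cup \{u, v\}| \leq \beta n$, there is a $K_r$-copy $T_s$ with index $\mathbf{s}$ in $G - (W \cup \{u, v\})$; applying the robustness of $\mathbf{t}$ to the enlarged forbidden set $W \cup \{u, v\} \cup T_s$ (still of size at most $\beta n$) yields a vertex-disjoint $K_r$-copy $T_t$ with index $\mathbf{t}$. The identity $\mathbf{s} - \mathbf{t} = \mathbf{u}_i - \mathbf{u}_j$ forces $s_i \geq 1$ and $t_j \geq 1$, so I can fix ``anchors'' $x \in T_s \cap V_i$ and $y \in T_t \cap V_j$; the pair $(x, y)$ encodes precisely the shift of a single index from $V_i$ to $V_j$ permitted by the transferral.

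Next I would use the closedness of each part to propagate $u \to x$ in $V_i$ and $v \to y$ in $V_j$. Since $V_i$ is $(K_r, \beta n, t)$-closed and the accumulated forbidden set $W \cup T_s \cup T_t \cup \{v\}$ has size at most $\beta n$, there is a $K_r$-connector $S_1$ of size at most $rt - 1$ for $u, x$ such that both $G[S_1 \cup \{u\}]$ and $G[S_1 \cup \{x\}]$ have $K_r$-factors; symmetrically, $V_j$-closedness yields a connector $S_2$ of size at most $rt - 1$ for $v, y$ disjoint from everything selected so far. The candidate connector is then assembled from $S_1 \cup S_2 \cup T_s \cup T_t$, with an appropriate inclusion or exclusion of the anchors $x, y$ chosen so that $|S| + 1$ is divisible by $r$ in accordance with the transferral; in every case $|S| \leq 2(rt - 1) + 2r \leq 2 r^2 t - 1$, meeting the size budget.

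The main obstacle will be verifying that $G[S \cup \{u\}]$ and $G[S \cup \{v\}]$ admit $K_r$-factors \emph{simultaneously} from these building blocks. The intended scheme for $G[S \cup \{u\}]$ uses the $K_r$-factor of $G[S_1 \cup \{u\}]$ (from $u$--$x$ reachability), the tile $T_s$, and a covering of the remaining vertices using $T_t$ combined with a factor from $v$--$y$ reachability, with the anchor $y$ reinstated via the swap; for $G[S \cup \{v\}]$ the roles of $T_s$ and $T_t$ are interchanged, using $G[S_1 \cup \{x\}]$, the tile $T_t$, and a factor involving $v$ in place of $y$. The subtle point is that $\mathbf{s} - \mathbf{t} = \mathbf{u}_i - \mathbf{u}_j$ only guarantees compatibility at the level of index vectors between the two factorizations; promoting the swap to genuine $K_r$ tiles in $G$ requires exploiting the explicit cliques $T_s$ and $T_t$ produced by robustness, and a careful bookkeeping of which anchor is removed and where it is reinserted in each factorization. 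This combinatorial step is the technical heart of the argument.
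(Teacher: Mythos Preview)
The paper does not prove this lemma; it is quoted verbatim from \cite{HMWY2021} and used as a black box. So there is no in-paper proof to compare against, and your proposal should be judged on its own.

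Your outline has the right skeleton (robust cliques $T_s,T_t$, anchors $x\in T_s\cap V_i$, $y\in T_t\cap V_j$, and connectors $S_1,S_2$ from the closedness of $V_i,V_j$), but the assembly step contains a genuine gap. You propose to take $S$ inside $S_1\cup S_2\cup T_s\cup T_t$, adjusting only by including or excluding $x$ and $y$. This cannot work for $r\ge 3$. First, divisibility fails: since $|S_1|\equiv |S_2|\equiv -1\pmod r$ and $|T_s|=|T_t|=r$, the four options give $|S|+1\equiv -1,-2,-2,-3\pmod r$, none of which is $0$ for $r\ge 3$. Second, and more structurally, in $G[S\cup\{u\}]$ you must tile $S_2$ without access to $v$; the only available factor on $S_2$ is $G[S_2\cup\{y\}]$, which consumes $y$ and strands the $r-1$ non-anchor vertices $T_t\setminus\{y\}$ (or symmetrically $T_s\setminus\{x\}$), a set of size $r-1$ that cannot be $K_r$-tiled. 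Your phrase ``with the anchor $y$ reinstated via the swap'' has no concrete meaning here: index-vector equality $\mathbf{s}-\mathbf{u}_i=\mathbf{t}-\mathbf{u}_j$ does not let you swap individual vertices between cliques.

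The missing idea is to exploit that $T_s\setminus\{x\}=\{a_1,\dots,a_{r-1}\}$ and $T_t\setminus\{y\}=\{b_1,\dots,b_{r-1}\}$ have the \emph{same} index vector, so one may pair $a_k,b_k$ within a common part $V_{\ell_k}$ and pull a connector $C_k$ for each pair from the closedness of $V_{\ell_k}$. Then
\[
S'=(T_s\setminus\{x\})\cup(T_t\setminus\{y\})\cup\bigcup_{k=1}^{r-1}C_k
\]
is a $K_r$-connector for $x$ and $y$: in $G[S'\cup\{x\}]$ use the tile $T_s$ together with the factors of $G[C_k\cup\{b_k\}]$, and in $G[S'\cup\{y\}]$ use $T_t$ together with the factors of $G[C_k\cup\{a_k\}]$. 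Chaining $u\leftrightarrow x$, $x\leftrightarrow y$, $y\leftrightarrow v$ by transitivity then gives a connector for $u,v$ of size at most $r\big(t+((r-1)t+1)+t\big)-1=r((r+1)t+1)-1\le 2r^2t-1$, as required. The forbidden-set bookkeeping (each new piece avoids $W$ and all earlier pieces, total size $O(r^2t)\le \beta n/2$) is routine.
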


Note that to invoke Lemma \ref{lem5.9}, we need the following lemma which provides a sufficient condition for the existence of a transferral.

\begin{lemma}[]\label{lem5.10}
Given $p, r\in \mathbb{N}$ and constants $\mu, \delta_{1}>0$, there exist $\alpha, \beta'>0$ such that the following holds for sufficiently large $n$.
Let $G$ be an $n$-vertex graph with $\delta(G)\geq (\frac{1}{2}+\mu)n$, $\alpha_{r-2}(G)\leq \alpha n$, $\mathcal{P}=\{V_{1}, \dots, V_{p}\}$ be a partition of $V(G)$ with $|V_{i}|\geq \delta_{1}n$ for each $i\in [p]$.
If $p\geq 2$, then there exist two $r$-vectors $\textbf{s}, \textbf{t}\in I^{\beta'}(\mathcal{P})$ such that $\textbf{s} - \textbf{t}=\textbf{u}_{i} - \textbf{u}_{j}$ for some distinct $i, j\in [p]$.
\end{lemma}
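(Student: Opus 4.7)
The plan is to apply the regularity method to $G$ and use the embedding Lemmas \ref{lem2.17} and \ref{lem2.171} to exhibit two index vectors $\textbf{s},\textbf{t}\in I^{\beta'}(\mathcal{P})$ with $\textbf{s}-\textbf{t}=\textbf{u}_i-\textbf{u}_j$. Robustness will be essentially automatic, since each structure located in the reduced multigraph will produce $\Omega(n^r)$ copies of $K_r$ with the prescribed index vector.

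First, apply Lemma \ref{lem2.3} to $G$ with $\alpha\ll\varepsilon\ll\beta\ll\min\{\mu,\delta_1,1/p\}$ to obtain an $\varepsilon$-regular partition $V(G)=W_0\cup W_1\cup\dots\cup W_k$ and reduced multigraph $R$ with $\delta(R)\ge(1+\mu)k$ by Fact \ref{fact2.5}. Assign each cluster $W_a$ a \emph{type} $\tau(a)\in[p]$ equal to the index $i$ with $|W_a\cap V_i|$ largest; moving at most $O(\varepsilon n)$ ``ambiguous'' vertices into $W_0$ ensures each cluster lies entirely in $V_{\tau(a)}$, while $|V_i|\ge\delta_1 n$ forces each type to be represented by $\Omega_{\delta_1}(k)$ clusters. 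Using $p\ge 2$ together with the minimum degree condition in the multigraph, one verifies that $R$ admits a cross-type edge; since any two vertices of $R$ share at least $\mu k$ common neighbours, every edge of $R$ lies in a triangle, so I may pick a cross-type triangle $T=W_aW_bW_c$.

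The core of the argument is a case analysis on the type pattern and edge multiplicities of $T$; in each of the following configurations, Lemmas \ref{lem2.17} and \ref{lem2.171} produce $K_r$'s with the stated index vectors (written in the $(V_i,V_j)$ coordinates when only two types are involved): (i) a cross-type double-edge of types $(i,j)$ yields $(2,r-2)$ and $(r-2,2)$ via Lemma \ref{lem2.17}; (ii) a triangle of type pattern $(i,i,j)$ yields $(r-1,1)$ and $(2,r-2)$ via Lemma \ref{lem2.171}; (iii) a triangle of type pattern $(i,j,j)$ yields $(1,r-1)$ and $(r-2,2)$. In each of (i)--(iii), two of the exhibited vectors differ by exactly $\textbf{u}_i-\textbf{u}_j$ (for instance $(r-1,1)-(r-2,2)$), giving the unit transferral. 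Robustness then follows from Lemma \ref{lem2.2} and a standard counting argument: removing any $W\subseteq V(G)$ with $|W|\le\beta' n$ leaves the relevant regular pairs essentially intact (with density dropping by at most $O(\beta')$), so the same embedding still produces a copy of $K_r$ with each index vector in $G-W$.

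The main obstacle is the remaining subcase, where the only available cross-type triangles in $R$ have three \emph{distinct} types $(i,j,\ell)$. Then the three index vectors $(r-2,1,1),(1,r-2,1),(1,1,r-2)$ from Lemma \ref{lem2.171} only differ pairwise by $(r-3)$-multiples of unit transferrals, which is not enough for $r\ge 5$. To produce a genuine unit transferral in this case I would seek, inside $R$, either a same-type double-edge adjacent to a cross-type triangle or a second cross-type triangle whose type pattern collapses the third coordinate (that is, shares the pair $(i,j)$ with a repetition), and then combine the two sets of index vectors to cancel the offending factor of $r-3$. Carrying out this combinatorial case distinction rigorously, using the multigraph minimum-degree bound $(1+\mu)k$ together with the lower bound $|V_i|\ge\delta_1 n$ on the parts, is the most delicate step of the proof.
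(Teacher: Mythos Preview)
Your case analysis in (i)--(iii) contains an arithmetic error that makes the argument fail for $r\ge 5$. The pairs of vectors you list do \emph{not} differ by a unit transferral: in (i), $(2,r-2)-(r-2,2)=(r-4)(\textbf{u}_j-\textbf{u}_i)$; in (ii) and (iii) the two vectors differ by $(r-3)(\textbf{u}_i-\textbf{u}_j)$. Your illustrative example $(r-1,1)-(r-2,2)$ combines a vector from (ii) with one from (i) or (iii), but nothing in your setup guarantees that both structures are present simultaneously. Thus the ``$(r-3)$-multiple'' obstruction you flagged only for the three-type triangle already appears in the two-type cases, and the proposal does not produce a unit transferral for general $r$.

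The paper avoids this by arguing by contradiction and by choosing a different pair of embeddings in which only \emph{one} vertex changes part. If a crossing double-edge $V_{i,1}V_{j,1}$ exists, it is extended to a triangle via a common neighbour $V_{w,q}$; one embeds a $K_r$ with pattern $(2,r-2)$ in $(V_{i,1},V_{j,1})$ and another with pattern $(1,r-2,1)$ in $(V_{i,1},V_{j,1},V_{w,q})$, giving $\textbf{u}_i-\textbf{u}_w$ directly. Otherwise, if some crossing triangle contains a double-edge, that double-edge must be same-type, say $V_{i,1}V_{i,2}$; the double-edge yields a $K_r$ entirely inside $V_i$ with vector $(r,0,\dots)$, while the triangle yields $(r-1,1,\dots)$, and their difference is $\textbf{u}_i-\textbf{u}_j$. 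Finally, if neither a crossing double-edge nor a crossing $K_3^{=}$ exists, then for any crossing single edge $V_{i,1}V_{j,1}$ one has $N(V_{i,1})\cap D_{j,1}=\emptyset=N(V_{j,1})\cap D_{i,1}$, which forces $d(V_{i,1})+d(V_{j,1})\le 2k$, contradicting $\delta(R)\ge(1+\mu)k$. This last degree-counting step is what rules out the residual case you left open.
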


Now, we have collected all the tools for the proof of Lemma \ref{lem5.4}.

\begin{proof} [Proof of Lemma \ref{lem5.4}] Given $r\in \mathbb{N}$ and constants $\beta_{1}, \gamma_{1}, \mu>0$, we choose

\begin{center}
$\frac{1}{n}\ll\alpha\ll\beta, \frac{1}{t}\ll \beta_{2}, \frac{1}{t_{2}}\ll \beta_{1}, \gamma_{1}, \mu$.
\end{center}
Let $G$ be an $n$-vertex graph with $\delta(G)\geq (\frac{1}{2}+\mu)n$, $\alpha_{r-2}(G)\leq \alpha n$ and every vertex in $V(G)$ is $(K_{r}, \beta_{1}n, 1)$-reachable to at least $\gamma_{1}n$ other vertices.
Applying Lemma \ref{lem5.8} on $G$, we obtain a partition $\mathcal{P}_{0}=\{V_{1}, \dots, V_{p}\}$ for some $p\leq \lceil\frac{1}{\gamma_{1}}\rceil$, where each $V_{i}$ is $(K_{r}, \beta_{2}n, t_{2})$-closed and $|V_{i}|\geq \frac{\gamma_{1}n}{2}$.

Let $\mathcal{P}'=\{U_{1}, \dots, U_{p'}\}$ be a vertex partition of $G$ with minimum $|\mathcal{P}'|$ such that each $|U_{i}|\geq \frac{\gamma_{1}n}{2}$ and $U_{i}$ is $(K_{r}, \beta n, t)$-closed.
We claim that $p'=1$.
If $p'\geq 2$, then by Lemma \ref{lem5.9} and Lemma \ref{lem5.10}, there exist two distinct vertex parts $U_{i}$ and $U_{j}$ for distinct $i, j\in [p']$ such that $U_{i}\cup U_{j}$ is $(K_{r}, \beta'n, t')$-closed for some $\beta'$ and $t'$.
By taking $U_{i}\cup U_{j}$ as a new part in partition and renaming all the parts if necessary, we get a partition $\mathcal{P}''$ with $|\mathcal{P}''|<|\mathcal{P}'|$, which contradicts the minimality of $|\mathcal{P}'|$.
Hence, $V(G)$ is $(K_{r}, \beta n, t)$-closed.
\end{proof}

Next, we will give a proof of Lemma \ref{lem5.10}.
In order to prove Lemma \ref{lem5.10}, we use the regularity lemma (Lemma \ref{lem2.3}) and some embedding results.
In particular, these embedding results allow us to construct vertex-disjoint copies of $K_{r}$ with different vertex distribution, which can be used to show the existence of a transferral.

\begin{proof}[Proof of Lemma \ref{lem5.10}] Given $p, r, t\in \mathbb{N}$ and positive constants $\mu, \delta_{1}$, we choose
\begin{center}
$\frac{1}{n}\ll \alpha\ll \frac{1}{N}\ll\beta'\ll \frac{1}{k}\ll \varepsilon\ll \mu, \delta_{1}$.
\end{center}
Let $\beta=\frac{\mu}{10}$, $G$ be an $n$-vertex graph with $\delta(G)\geq (\frac{1}{2}+\mu)n$, and $\mathcal{P}=\{V_{1}, \dots, V_{p}\}$ be a vertex partition of $V(G)$ with $|V_{i}|\geq \delta_{1}n$ for each $i\in [p]$.
Anchoring at the current vertex partition of $V(G)$, we apply Lemma \ref{lem2.3} with $\varepsilon, \beta>0$ and refine the current partition.
After refinement, we denote the $\varepsilon$-regular partition by $\mathcal{P}'=\{V_{0}, V_{1, 1}, \dots, V_{1, s_{1}}, \dots, V_{p, 1}, \dots, V_{p, s_{p}}\}$ where $V_{i, j}\subseteq V_{i}$ and $s_{i}\in \mathbb{N}$ for each $i\in [p]$, $j\in [s_{i}]$.
Let $R:=R_{\beta, \varepsilon}$ be the reduced multigraph with multiplicity 2.
Let $|V(R)|=k$ and $\mathcal{V}_{i}:=\{V_{i, 1}, \dots, V_{i, s_{i}}\}$ be a vertex subset of $V(R)$ for each $i\in[p]$.
We may assume $|\mathcal{V}_{i}|\leq |\mathcal{V}_{i+1}|$ for each $i\in [p-1]$.
By Fact \ref{fact2.5}, it holds that $\delta(R)\geq (1+\mu)k$.
Clearly, every cluster is in a double-edge.

We call a subgraph $\mathcal{K}\subseteq R$ \emph{crossing} with respect to the partition $\mathcal{P}$ if $V(\mathcal{K})\cap \mathcal{V}_{i}\neq\emptyset$ and $V(\mathcal{K})\cap \mathcal{V}_{j}\neq\emptyset$ for distinct $i, j\in [p]$.
For every cluster $V_{i, j}$, $D_{i, j}$ denotes the double-edge neighborhood of $V_{i, j}$.

We first claim that there is no crossing double-edge.
Assume for contradiction, $V_{i, 1}V_{j, 1}$ is a crossing double-edge in $R$ for some distinct $i, j\in [p]$.
As $\delta(R)\geq (1+\mu)k$, it holds that $N(V_{i, 1})\cap N(V_{j, 1})\neq\emptyset$.
Assume $V_{w,q}\in N(V_{i, 1})\cap N(V_{j, 1})$ and $V_{w,q}\in \mathcal{V}_{w}$ for some $w\in [p]$ and $q\in [s_{w}]$.
Here, we further assume $w\neq i$.
The proof of the case $w=i$ is similar, and we omit it.
As $V_{i, 1}V_{j, 1}$ is a double-edge in $R$, it holds that $(V_{i, 1}, V_{j, 1})$ is $\varepsilon$-regular with $d(V_{i, 1}, V_{j, 1})\geq \frac{1}{2}+\beta$.
Let $V'_{i, 1}$ (and $V'_{j, 1}$) comes from $V_{i, 1}$(and $V_{j, 1}$) by deleting any $\beta'n$ vertices, thus $|V'_{i, 1}|~(\text{and}~|V'_{j, 1}|)\geq m-\beta'n\geq\frac{\beta}{2}m$, since $\beta'\ll \mu$ and $\beta=\frac{\mu}{10}$.
Then by Lemma \ref{lem2.2} $(V'_{i, 1}, V'_{j, 1})$ is $\varepsilon_{1}$-regular with $\varepsilon_{1}:=\max\left\{2\varepsilon, \frac{\varepsilon|V_{j, 1}|}{|V'_{j, 1}|}\right\}\leq \frac{2}{\beta}\varepsilon$, and $d(V'_{i, 1}, V'_{j, 1})\geq d(V_{i, 1}, V_{j, 1})-\varepsilon\geq \frac{1}{2}+\frac{\beta}{2}$ since $\varepsilon\ll \mu$ and $\beta=\frac{\mu}{10}$.
Every vertex in $V'_{i, 1}$, except at most $\varepsilon_1|V'_{i, 1}|$ vertices, has at least $(\frac{1}{2}+\beta-\varepsilon_1)|V'_{j, 1}|\geq (\frac{1}{2}+\frac{\beta}{2})|V'_{j, 1}|$ neighbors in $V'_{j, 1}$, as $\varepsilon\ll\mu$.
Since $|V'_{i, 1}|\geq \alpha_{r-2}(G)+\varepsilon_1|V'_{i, 1}|$, there exists a copy of $K_{r-2}$, say $H$, in $V'_{i, 1}$ so that every vertex in it has at least $(\frac{1}{2}+\frac{\beta}{2})|V'_{j, 1}|$ neighbors in $V'_{j, 1}$.
In $H$, we choose an edge, say $uv$.
It holds that $|N(u)\cap N(v)\cap V'_{j, 1}|\geq 2(\frac{1}{2}+\frac{\beta}{2})|V'_{j, 1}|-|V'_{j, 1}|\geq \beta|V'_{j, 1}|\geq \frac{\beta^2}{2}m\geq \alpha n$, as $\frac{1}{n}\ll\alpha\ll\beta$.
As $\alpha_{r-2}(G)\leq \alpha n$, $G[N(u)\cap N(v)\cap V'_{j, 1}]$ contains a copy of $K_{r-2}$ which together with $\{u, v\}$ forms a copy of $K_r$, say $H_1$, with $|V(H_1)\cap V'_{i, 1}|=2$ and $|V(H_1)\cap V'_{j, 1}|=r-2$.
This gives us a $r$-vector $\textbf{s}\in I^{\beta'}(\mathcal{P})$.

Since $V_{w, q}\in N(V_{i, 1})\cap N(V_{j, 1})$, it holds that $(V_{i, 1}, V_{j, 1})$ is $(\varepsilon, \beta)$-regular, $(V_{i, 1}, V_{w, q})$ is $(\varepsilon, \beta)$-regular and $(V_{j, 1}, V_{w, q})$ is $(\varepsilon, \beta)$-regular.
Let $V'_{i, 1}, V'_{j, 1}$ and $V'_{w, q}$ come from $V_{i+1, 1}, V_{i, 1}$ and $V_{w, q}$ by deleting any $\beta'n$ vertices respectively.
Thus all but at most $2\varepsilon m+\beta'n\leq 3\varepsilon m$ vertices in $V_{i, 1}$ so that every vertex in it has at least $(\beta-\varepsilon)m-\beta'n\geq \frac{\beta}{4}m$ in $V'_{j, 1}$ and $V'_{w, q}$ respectively.
We choose such a vertex, say $v$, with $v\in V'_{i, 1}$, $|N(v)\cap V'_{j, 1}|\geq \frac{\beta}{4}m$ and $|N(v)\cap V'_{w, q}|\geq \frac{\beta}{4}m$.
By the similar arguments as above, we are able to obtain a copy of $K_{r-1}$ with one vertex in $N(v)\cap V'_{w, q}$ and $r-2$ vertices in $N(v)\cap V'_{j, 1}$ which together with $\{v\}$ forms a copy of $K_r$, say $H_2$.
Note that $H_2$ gives us a $r$-vector $\textbf{t}\in I^{\beta'}(\mathcal{P})$.
We obtain two $r$-vectors $\textbf{s}, \textbf{t}\in I^{\beta'}(\mathcal{P})$ such that $\textbf{s} - \textbf{t}=\textbf{u}_{i} - \textbf{u}_{w}$.
In the following, we may further assume that there is no crossing double-edge.

Next, we claim that there is no crossing $K^{=}_{3}$.
Assume $V_{i,1}V_{i,2}V_{j,1}$ is a crossing $K^{=}_{3}$ for some distinct $i, j\in [p]$.
Clearly, $V_{i,1}V_{i,2}$ is the double-edge.
Let $V'_{i,1}, V'_{i,2}$ and $V'_{j,1}$ come from $V_{i,1}, V_{i,2}$ and $V_{j,1}$ by deleting any $\beta'n$ vertices.
As $\frac{1}{n}\ll\alpha\ll\beta'\ll\frac{1}{k}$, it holds that $|V'_{i,1}|\geq |V_{i,1}|-\beta'n\geq \alpha n\geq \alpha_{r-2}(G)$.
Therefore, there exists a copy of $K_{r-2}$, say $H_3$, in $G[V'_{i,1}]$.
In $H_3$, we take an edge, say $uv\in E(H_3)$.
It holds that $|N(u)\cap N(v)\cap V'_{i, 2}|\geq 2(\frac{1}{2}+\beta)|V_{i, 2}|-|V_{i, 2}|-\beta'n\geq \frac{\beta}{2}m\geq \alpha n$, as $\alpha\ll\beta'\ll\beta$.
Hence, $D[N(u)\cap N(v)\cap V'_{i, 2}]$ contains a copy of $K_{r-2}$ which together with $uv$ forms a copy of $K_r$.
This gives us a $r$-vector $\textbf{s}\in I^{\beta'}(\mathcal{P})$.

Since $(V_{i, 1}, V_{j, 1})$ and $(V_{i, 2}, V_{j, 1})$ are $(\varepsilon, \beta)$-regular respectively, all but at most $2\varepsilon|V_{j, 1}|+\beta'n\leq 3\varepsilon m$ vertices in $V_{j, 1}$ lie in $V'_{j, 1}$ so that each of them has at least $(\beta-\varepsilon)m-\beta'n\geq \frac{\beta}{2}m-\beta'n\geq \frac{\beta}{4}m$ vertices in $V'_{i, 1}$ and $V'_{i, 2}$, as $\frac{1}{n}\ll\beta'\ll\varepsilon\ll\mu$.
We choose such a vertex in $V'_{j, 1}$, say $w$, with $|N(w)\cap V'_{i, 1}|\geq \frac{\beta}{4}m$ and $|N(w)\cap V'_{i, 2}|\geq \frac{\beta}{4}m$.
By the similar arguments as above, $G[N(w)\cap V'_{i, 1}, N(w)\cap V'_{i, 2}]$ contains a copy of $K_{r-1}$ with one vertex in $N(w)\cap V'_{i, 1}$ and $r-2$ vertices in $N(w)\cap V'_{i, 2}$.
Till now, we obtain a copy of $K_r$ with one vertex in $V'_{j, 1}$ and $r-1$ vertices in $V'_{i, 1}\cup V'_{i, 2}$.
This gives us a $r$-vector $\textbf{t}\in I^{\beta'}(\mathcal{P})$.
We obtain two $r$-vector $\textbf{s}, \textbf{t}\in I^{\beta'}(\mathcal{P})$ such that $\textbf{s} - \textbf{t}=\textbf{u}_{i} - \textbf{u}_{j}$.
In the following, we may further assume that there is no crossing $K^{=}_{3}$.

Assume $V_{i,1}V_{j,1}$ is a crossing single-edge for some distinct $i, j\in [p]$.
As there is no crossing $K^{=}_{3}$, it holds that $N(V_{i,1})\cap D_{j,1}=\emptyset$ and $N(V_{j, 1})\cap D_{i, 1}=\emptyset$.
Hence, we have $d(V_{i, 1})\leq k-|D_{i, 1}|-|D_{j, 1}|+2|D_{i, 1}|$ and $d(V_{j, 1})\leq k-|D_{i, 1}|-|D_{j, 1}|+2|D_{j, 1}|$ which yields $(2+2\mu)k\leq2\delta(R)\leq d(V_{i, 1})+d(V_{i+1, 1})\leq2k$, a contradiction.
\end{proof}

\bibliographystyle{abbrv}
\bibliography{ref}


\end{document}